\newcommand{\field}[1]{\mathbb{#1}}
\DeclareMathOperator{\C}{\field{C}}
\DeclareMathOperator{\R}{\field{R}}
\DeclareMathOperator{\Z}{\field{Z}}
\DeclareMathOperator{\N}{\field{N}}
\renewcommand{\P}{\field{P}}
\DeclareMathOperator{\E}{\field{E}}
\newcommand{\parens}[1]{\left(#1\right)}
\newcommand{\set}[1]{\left\{#1\right\}}
\newcommand{\langles}[1]{\left\langle#1\right\rangle}
\newcommand{\given}{\,|\,}
\DeclareMathOperator{\var}{Var}
\DeclareMathOperator{\dist}{dist}
\DeclareMathOperator{\Real}{Re}
\DeclareMathOperator{\Imag}{Im}
\DeclareMathOperator{\tr}{Tr}
\DeclareMathOperator{\corr}{Corr}
\DeclareMathOperator{\cov}{Cov}
\DeclareMathOperator{\rank}{rank}
\newtheorem{thm}{Theorem}
\numberwithin{thm}{section}
\newtheorem{lem}[thm]{Lemma}
\newtheorem{defn}[thm]{Definition}
\newtheorem{prop}[thm]{Proposition}
\newtheorem{cor}[thm]{Corollary}
\newtheorem{remark}[thm]{Remark}
\newcommand{\norm}[1]{\left\lVert#1\right\rVert}
\newcommand{\cal}[1]{\mathcal{#1}}
\newcommand{\abs}[1]{\left|#1\right|}
\newcommand{\ceil}[1]{\lceil#1\rceil}
\newcommand{\floor}[1]{\lfloor#1\rfloor}
\DeclareMathOperator{\eps}{\varepsilon}
\title{The Limiting Spectral Distribution for Sparse Elliptic Random Matrices}
\author[J. Carpenter]{Jackson Carpenter}
		\address{Department of Mathematics\\ University of Colorado\\ Campus Box 395\\ Boulder, CO 80309-0395\\USA}
	\email{jackson.carpenter@colorado.edu}
\author[S. O'Rourke]{Sean O'Rourke}
		\address{Department of Mathematics\\ University of Colorado\\ Campus Box 395\\ Boulder, CO 80309-0395\\USA}
		\email{sean.d.orourke@colorado.edu}
		\thanks{S. O'Rourke has been partially supported by NSF CAREER grant DMS-2143142. }
\date{} 
\begin{document}

\begin{abstract}
    This paper studies sparse elliptic random matrix models which generalize both the classical elliptic ensembles and sparse i.i.d.\ matrix models by incorporating correlated entries and a tunable sparsity parameter $p_n$. Each $n\times n$ matrix $X_n$ is formed by entry-wise multiplication of an elliptic random matrix by an elliptic matrix of Bernoulli($p_n$) variables, where $np_n\to\infty$, allowing for interpolation between dense and sparse regimes. The main result establishes that under appropriate normalization, the empirical spectral measures of these matrices converge weakly in probability to the uniform measure on a rotated ellipsoid in the complex plane as the dimension $n$ tends to infinity. Interestingly, the shape of the limiting ellipsoid depends not just on the mirrored entry-wise correlation structure, but also non-trivially on the sparsity limit $p=\lim\limits_{n\to\infty}p_n\in[0,1]$. The main result generalizes and recovers many classical results in sparse and dense regimes for elliptic and i.i.d.\ random matrix models.
\end{abstract}

	\maketitle
\section{Introduction}	

For an $n\times n$ matrix $M_n$ over $\C$, we consider the empirical spectral measure (ESM) of $M_n$:
\[
    \mu_{M_n}=\frac{1}{n}\sum_{i=1}^n\delta_{\lambda_{i}(M_n)}.
\]
Here $\lambda_1(M_n),\ldots,\lambda_n(M_n)\in\C$ are the $n$ unordered eigenvalues of $M_n$ counted with algebraic multiplicity and $\delta_z$ is a Dirac mass centered at $z\in\C$. If $M_n$ is a \textit{random} matrix model, this necessarily random probability measure on $\C$ (or $\R$, if the eigenvalues of $M_n$ are guaranteed to be real, say) captures the empirical location of the eigenvalues of $M_n$. One of the foundational questions in random matrix theory concerns the limiting behavior of $\mu_{c_n^{-1}M_n}$, as $n\to\infty$, where $c_n$ is an appropriate normalization constant for the matrix model.

One of the most foundational results in this field concerns a large family of matrix models: \textit{i.i.d.\ random matrices}. In such models, $M_n = (\xi_{ij})_{1\leq i,j,\leq n}$, where $(\xi_{ij})_{i,j\geq 1}$ is a family of independent random variables that are identically distributed as a real or complex random variable $\xi$ with mean zero and unit variance. For such matrix models $M_n$, we have \textit{the circular law}, which states that $\mu_{n^{-1/2}M_n}$ converges weakly almost surely as $n\to\infty$ to the uniform measure on the unit disk in the complex plane \cite{taouniversality}. In other words, for every bounded and continuous function $f:\C\to\R$,
\[
    \frac{1}{n}\sum_{i=1}^nf(\lambda_i(n^{-1/2}M_n))=\int f\,d\mu_{n^{-1/2}M_n}\to\frac{1}{\pi}\int_{\abs{z}<1}f(z)\,dz
\]
almost surely as $n\to\infty$.
This result culminates a long line of research \cite{edelmangaussian, ginibregaussian,mehtagaussian,girko1,girko2,girko3,girko4,girko5,girko6,BaiCircular,rudtiksingularity,tao2010random,gotzecircularlaw,zhoucirc,tao2008randommatricescircularlaw}. This list is by no means exhaustive and we refer the reader to \cite{aroundthecirc} for more history on the development of the circular law to the universal, almost sure result we have today.

Beyond i.i.d.\ random matrix models are those that this paper is concerned with: sparse random matrix models and elliptic random matrix models. Sparsity in random matrices can be applied to any underlying model $M_n$ by multiplying the entries of $M_n$ by Bernoulli($p_n$) variables: $\delta^{(n)}\sim$ Bernoulli($p_n$) if
\[
    \P(\delta^{(n)} = 1) = p_n \quad\text{and}\quad \P(\delta^{(n)}=0)=1-p_n
\]
for some $p_n\in[0,1]$.
In the sparse i.i.d.\ setting, for a certain level of sparsity depending on the decay rate of $p_n$, we still have weak convergence of the normalized ESMs to the uniform measure on the unit disk in the complex plane. Results of these types can be seen for various levels of sparsity in the i.i.d.\ setting in \cite{tao2008randommatricescircularlaw,gotzecircularlaw,wood,invertsparse,circlawforsparse}. For minimal assumptions on sparsity, these results can be seen for the real case in \cite{rudelsontik} due to Rudelson and Tikhomirov or in \cite{revisited} due to Sah, Sahasrabudhe, Sawhney,  with a slightly different flavor of proof than the classical Hermitization argument to deal with the complex case. For sparse i.i.d.\ random matrix models, we have the following, which is \cite[Theorem 1.4]{revisited}:

\begin{thm}\label{thm:sparsecircularlaw}
    Let $(M_n)_{n\geq 1}$ be a sparse i.i.d.\ random matrix model. Namely, the entries of $M_n$ are $(M_n)_{ij} = \delta_{ij}^{(n)}x_{ij}$ satisfying the following:
    \begin{itemize}
        \item[(i)] (Identical distributions) Each $x_{ij}$ (for $i,j\geq1$) are i.i.d.\ copies of a complex random variable with mean zero and variance 1. For each $n\geq 1$ and $1\leq i,j\leq n$, each $\delta_{ij}^{(n)}$ are i.i.d.\ copies of a Bernoulli($p_n$) variable. 

        \item[(ii)] (Independence)
        $\{x_{ij}:i,j\geq1\}\cup\{\delta_{ij}^{(n)}:n\geq 1, 1\leq i,j\leq n\}$ is a collection of independent random elements.

        \item[(iii)] (Sparsity) $p_n\to 0$ and $np_n\to\infty$ as $n\to\infty$.
    \end{itemize}
    
    Then the ESMs $\mu_{(np_n)^{-1/2} M_n}$ converge weakly in probability to the uniform measure on the unit disk in the complex plane.
\end{thm}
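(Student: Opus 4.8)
\emph{Proof strategy.} The natural route is Girko's Hermitization combined with the logarithmic potential (replacement) method. For each $z\in\C$ write $A_n(z):=(np_n)^{-1/2}M_n-zI$ and let $\nu_{n,z}$ be the empirical distribution of its singular values $s_1(A_n(z))\ge\cdots\ge s_n(A_n(z))$. Since $\abs{\det A_n(z)}=\prod_i s_i(A_n(z))$, the logarithmic potential of $\mu_{(np_n)^{-1/2}M_n}$ at $z$ equals $-\int_0^\infty \log x\,d\nu_{n,z}(x)$. By the replacement principle of Tao--Vu (in the form used by Bordenave and Chafa\"{\i}), to conclude that $\mu_{(np_n)^{-1/2}M_n}$ converges weakly in probability to the circular law it suffices to show, for Lebesgue-almost every $z\in\C$: (a) $\nu_{n,z}$ converges weakly in probability to a deterministic measure $\nu_z$; and (b) $\log(\cdot)$ is uniformly integrable in probability with respect to the family $\nu_{n,z}$, i.e.\ $\P\parens{\int_{x>K}\log x\,d\nu_{n,z}(x)>\varepsilon}\to0$ for $K$ large and, crucially, $\lim_{\delta\to0}\limsup_n\P\parens{\int_{x<\delta}\abs{\log x}\,d\nu_{n,z}(x)>\varepsilon}=0$. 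One then checks that $-\int_0^\infty\log x\,d\nu_z(x)$ is the logarithmic potential of the uniform measure on the unit disk.

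The decisive normalization fact is that $\E\abs{(np_n)^{-1/2}(M_n)_{ij}}^2=(np_n)^{-1}\,\E\abs{\delta_{ij}^{(n)}}^2\,\E\abs{x_{ij}}^2=(np_n)^{-1}p_n=1/n$, exactly as in the \emph{dense} i.i.d.\ model. Hence the limiting symmetrized singular value law (the symmetrization of $\nu_z$) should coincide with the one appearing in the dense circular law, characterized by the self-consistent (Dyson-type) equation for the Stieltjes transform of the empirical spectral distribution of the $2n\times2n$ Hermitization of $A_n(z)$ (the block matrix with $A_n(z)$ and $A_n(z)^*$ off the diagonal and zeros on the diagonal), whose solution has logarithmic-potential transform equal to that of the circular law. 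To prove (a) I would first truncate the $x_{ij}$ at a large constant (subtracting the negligible mean and rescaling), a standard reduction that does not affect the limit; then establish concentration of $m_{n,z}(w)=\frac1n\tr\parens{(A_n(z)^*A_n(z)-wI)^{-1}}$ about its mean via a row-martingale/Azuma argument — here sparsity enters the martingale increments only through a factor that still vanishes because $np_n\to\infty$ — and close an approximate fixed-point equation whose stable solution is the dense one. A moment-method alternative also works: in $\E\tr\parens{(A_n(z)^*A_n(z))^k}$ the non-tree pairings carry extra factors of $(np_n)^{-1}\to0$, so the surviving contributions match the dense model.

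The genuine obstacle is (b), specifically the lower tail of the smallest singular value $s_n(A_n(z))$, uniformly enough in $z$ over compact sets. Sparse random matrices are far more prone to near-singularity than dense ones, and under the minimal hypothesis $np_n\to\infty$ the only structural protection against degeneracy is that with high probability no row or column of $M_n$ vanishes and each row of $(np_n)^{-1/2}M_n$ has Euclidean norm concentrating near $1$. I would adapt the quantitative-invertibility machinery of Rudelson--Vershynin and Rudelson--Tikhomirov: decompose the unit sphere into compressible and incompressible vectors, bound $\inf_{v\text{ compressible}}\norm{A_n(z)v}$ below by an $\varepsilon$-net argument using the row-norm concentration, and bound $\inf_{v\text{ incompressible}}\norm{A_n(z)v}$ below via the distance-to-a-hyperplane lemma together with a sparse Littlewood--Offord / small-ball estimate controlling $\P\parens{\norm{A_n(z)v}\le t/\sqrt n}$ for fixed incompressible $v$ (running the real-imaginary decomposition in the complex case). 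This should give $\P\parens{s_n(A_n(z))\le n^{-C}}=o(1)$ for an absolute constant $C$. Combining this hard-edge estimate with a crude count of the singular values below $\delta$ coming from the negative second moment identity $\sum_i s_i(A_n(z))^{-2}=\sum_j \dist(R_j,H_j)^{-2}$ ($R_j$ the $j$-th row, $H_j$ the span of the others) yields the uniform integrability of $\log$ near $0$ demanded by (b); the large-singular-value side is routine, since after truncation $\norm{A_n(z)}_{\mathrm{op}}\le\norm{(np_n)^{-1/2}M_n}_{\mathrm{op}}+\abs z=O(1)$ with high probability by a standard operator-norm bound for sparse matrices.

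Granting (a) and (b) for a.e.\ $z$, a Fubini argument inside the replacement principle promotes these pointwise-in-$z$ statements to weak convergence in probability of $\mu_{(np_n)^{-1/2}M_n}$ to the measure with logarithmic potential $-\int_0^\infty\log x\,d\nu_z(x)$, and a direct computation (identical to the dense case) identifies this as the uniform measure on the unit disk. I expect essentially all of the real work to lie in the smallest-singular-value estimate under the weak sparsity assumption $np_n\to\infty$; everything else is a careful but by-now-standard sparse adaptation of the Hermitization approach to the circular law.
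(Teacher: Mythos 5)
Your overall architecture --- Hermitization and the logarithmic potential, truncation, concentration of the Stieltjes transform, identification of $\nu_z$ with the dense limit, and then a least singular value bound plus the negative second moment identity for the uniform integrability of $\log(\cdot)$ --- is the same route this paper follows for its general sparse elliptic theorem (Theorem \ref{thm:maintheorem}), of which the present statement is the special case $p=\tau=0$ with i.i.d.\ atoms; the differences in how you would identify the limit (closing a fixed-point equation for the sparse matrix directly, or a moment method, instead of the paper's Lindeberg swap against a Gaussian comparison model resolved in the appendix) are matters of taste and both can be made to work.

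The genuine gap is in your treatment of (b). You assert that after truncation $\norm{A_n(z)}_{op}=O(1)$ with high probability ``by a standard operator norm bound for sparse matrices.'' Under the minimal hypothesis $np_n\to\infty$ this is false: if, say, $np_n=\sqrt{\log n}$, the maximal number of nonzero entries in a row is of order $\log n/\log\bigl(\log n/(np_n)\bigr)\gg np_n$, so even with bounded atoms $\norm{(np_n)^{-1/2}M_n}\gtrsim\sqrt{d_{\max}/(np_n)}\to\infty$ in probability. This error propagates to the two places where you use it. For the large-singular-value side of uniform integrability it is easily repaired: replace the operator norm by the Hilbert--Schmidt bound $\frac1n\norm{A_n(z)}_{HS}^2=O(1)$ in probability (this is exactly what the paper does, cf.\ Lemma \ref{lem:largestsv}). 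For the smallest singular value it is not cosmetic: the Rudelson--Vershynin compressible/incompressible scheme and the sparse Littlewood--Offord estimates you invoke are run on the event that the operator norm of the variance-one model is $O(\sqrt n)$, i.e.\ $O(1)$ after your normalization, and the failure of that event when $np_n\ll\log n$ is precisely the obstruction that makes the sparse invertibility results of \cite{rudelsontik} a substantial standalone work (real case), with the complex case under minimal sparsity being the reason \cite{revisited} designed a different argument altogether; it cannot be dispatched as a routine adaptation. The paper sidesteps this entirely: it only needs the polynomial bound $\P(\sigma_1(M_n-zI_n)\ge n^{1.5})=o(1)$ (Markov plus the HS norm) and then applies a ready-made least-singular-value theorem tolerant of polynomially large $\sigma_1$ (Theorem \ref{thm:andrewsmallsingval}, from \cite{andrewelliptic}), whose conclusion $\P(\sigma_n(M_n-zI_n)\le n^{-r})=o(1)$ is all that the ``in probability'' statement requires (Proposition \ref{prop:leastsingularvalue}). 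As written, your outline for the lower tail of $\sigma_n$ does not close; you must either import such a $\sigma_1\le n^{O(1)}$-tolerant invertibility estimate or supply the (genuinely difficult) sparse small-ball analysis yourself.
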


Elliptic random matrix models interpolate between Wigner matrix models \cite{introtoRMT} and i.i.d.\ models. More specifically, the $(i,j)$ entry of an elliptic random matrix $M_n$ is independent of all other entries except possibly the $(j,i)$ entry. The study of elliptic matrix models was introduced by Girko \cite{girkoelliptic1,girkoelliptic2,girkoelliptic3,GirkoElliptic4,girkoelliptic5,girkoelliptic6,girkoelliptic7,girkoelliptic8}. Before we begin discussing results for elliptic random matrices and the main result of this paper, we give a definition of the model. For sparsity, we define our matrix of Bernoulli variables:

\begin{defn}[Elliptic Bernoulli matrix]\label{def:Bernoullimatrix}
    Fix a sequence $(p_n)_{n\geq1}$ of sparsity parameters with $p_n\in(0,1]$ for all $n\geq 1$ and a  parameter $\tau\in[-1,1]$. For each $n\geq 1$, let $(\eta_1^{(n)},\eta_2^{(n)})$ be a pair of Bernoulli($p_n$) random variables. For any $n\geq 1$, we say that a matrix $B_n = (\delta^{(n)}_{ij})_{1\leq i,j\leq n}$ is an \textbf{elliptic Bernoulli matrix with sparsity parameters $(p_n)_{n\geq1}$ and $\tau$} if the following conditions hold:
    \begin{enumerate}[label = (\roman*)]
        \item (Sparsity) $p_n\to p\in[0,1]$ and $np_n\to\infty$ as $n\to\infty$.
        \item The quantities $\tau_n = \frac{\cov(\eta_1^{(n)},\eta_2^{(n)})} {p_n}$ converge to $\tau$ as $n\to\infty$, where
        \[
            \cov(\eta_1^{(n)},\eta_2^{(n)}) = \E[\eta_1^{(n)}\eta_2^{(n)}]-p_n^2.
        \]
        \item (Independence and identical distributions) We assume that
        \[
            \set{(\delta_{ij}^{(n)},\delta_{ji}^{(n)}):n\geq 1,1\leq i<j\leq n}\cup\set{\delta_{ii}^{(n)}:n\geq 1, 1\leq i\leq n}
        \]
        is a collection of independent random elements. For all $n\geq 1$ and all $1\leq i<j\leq n$, $(\delta_{ij}^{(n)},\delta_{ji}^{(n)})$ is distributed as $(\eta_1^{(n)},\eta_2^{(n)})$ and each $\delta_{ii}^{(n)}$ is a Bernoulli($p_n$) variable.
    \end{enumerate}
\end{defn}

\begin{remark}\label{rem:pnequalsone}
    The definition of $\tau_n$ above allows us to treat the degenerate case of $p_n=1$ for all $n\geq 1$. In such a degenerate case, $\tau_n=0$ for all $n\geq 1$ and so $\tau=0$. On the other hand if $p_n<1$ for each $n\geq 1$, then the correlations $\rho_{2,n} := \corr(\eta_1^{(n)},\eta_2^{(n)})$ exist and satisfy $\tau_n = \rho_{2,n}(1-p_n)$. If, furthermore, the $p_n$ do not converge to 1, then assumptions (i) and (ii) above together imply the convergence of the correlations $\rho_{2,n}$.
\end{remark}

Now for the main definition of this paper.

\begin{defn}[Sparse elliptic random matrix model]\label{def:sparseellipticmodel}
	Let $(\xi_1,\xi_2)$ be a complex-valued random vector where $\xi_1$ and $\xi_2$ have mean 0, variance 1, and $\E[\xi_1\xi_2] = \rho_1e^{i\theta}$ with $0\leq\rho_1<1$ and $0\leq \theta<2\pi$.  Let $(x_{ij})_{1\leq i, j}$ be an infinite double array of random variables on $\C$ and for each $n\geq 1$, let $B_n$ be an elliptic Bernoulli matrix with sparsity parameters $(p_n)_{n\geq1}$ and $\tau$ coming from and inheriting the notation from Definition \ref{def:Bernoullimatrix}.  
	For each $n\geq1$, we define the random $n\times n$ matrix $X_n = (\delta_{ij}^{(n)}x_{ij})_{1\leq i, j\leq n}$. We refer to the sequence of random matrices $(X_n)_{n\geq1}$ as a \textbf{sparse elliptic random matrix model with atom variable $(\xi_1,\xi_2)$ and sparsity parameters $(p_n)_{n\geq1}$ and $\tau$} if the following additional conditions hold:
	\begin{enumerate}[label = (\roman*)]
		\item (Independence and identical distributions) We assume that
        \begin{align*}
            \{(x_{ij},x_{ji}):1\leq i<j\}&\cup
            \{(\delta^{(n)}_{ij},\delta^{(n)}_{ji}):n\geq 1, 1\leq i< j\leq n\}\,\cup\{\delta_{ii}^{(n)}:n\geq 1, 1\leq i \leq n\}
        \end{align*}
        is a collection of independent random elements. Each pair $(x_{ij},x_{ji})$, $1\leq i<j$ is distributed as $(\xi_1,\xi_2)$.
        
        \item (Flexibility on the diagonal) We assume that $\{x_{ii}:i\geq1\}$ is a sequence of i.i.d.\ complex random variables that is independent of all Bernoulli variables present in the matrix and all off-diagonal entries in the matrix.
    \end{enumerate}
\end{defn}

For what is to come, we will also want to define a ``dense" elliptic random matrix model:

\begin{defn}[Dense elliptic random matrix model]\label{def:ellipticmodel}
	Let $(\psi_1,\psi_2)$ be a complex-valued random vector where $\psi_1$ and $\psi_2$ have mean 0 and variance 1. We say that the sequence of random matrices $(X_n)_{n\geq1}$ is a \textbf{dense elliptic random matrix model with atom variable $(\psi_1,\psi_2)$} if $(X_n)_{n\geq1}$ is a sparse elliptic random matrix model (coming from Definition \ref{def:sparseellipticmodel}) with atom variable $(\psi_1,\psi_2)$ and sparsity parameters satisfying $p_n=1$ for all $n\geq1$.
\end{defn}

By Remark \ref{rem:pnequalsone} and our assumptions in Definition \ref{def:Bernoullimatrix}, we have that $\tau_{n} =0$ for all $n\geq 1$ and $\tau=0$ in the case of dense elliptic matrices.

We remark that Definition \ref{def:sparseellipticmodel} also captures the i.i.d.\ and sparse i.i.d.\ settings when we take $\xi_1$ and $\xi_2$ to be i.i.d.\ copies of some complex random variable $\xi$.

In the context of limiting spectral distributions for dense elliptic random matrices, Nguyen and the second author prove a general result in \cite{ellipticlaw}, which builds on the work done in \cite{naumov} by Naumov for the real case:

\begin{thm}\label{thm:ellipticlaw}
    Suppose $(X_n)_{n\geq 1}$ is an elliptic random matrix model coming from Definition \ref{def:ellipticmodel} where the real-valued atom variable $(\psi_1,\psi_2)$ has correlation $\E[\psi_1\psi_2]=\rho\in(-1,1)$. Then the ESM $\mu_{n^{-1/2}X_n}$ converges weakly almost surely to $\mu_\rho$, where $\mu_\rho$ is the probability measure on $\C$ with density $\frac{1}{\pi(1-\rho^2)}\mathbbm{1}_{E_\rho}$ and $E_\rho$ is the ellipsoid
    \begin{equation}\label{eq:ellipse}
        E_\rho = \set{z\in\C:\frac{\Real(z)^2}{(1+\rho)^2}+\frac{\Imag(z)^2}{(1-\rho)^2}\leq 1}.
    \end{equation}
\end{thm}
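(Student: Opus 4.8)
The plan is to use Girko's Hermitization method. Since a compactly supported probability measure on $\C$ is determined by its logarithmic potential, it suffices to prove that for Lebesgue-almost every $z\in\C$,
\[
    \frac{1}{n}\sum_{i=1}^n\log\abs{\lambda_i(n^{-1/2}X_n)-z}
    =\int_0^\infty\log x\,d\nu_{n,z}(x)
    \longrightarrow \int_\C\log\abs{z-w}\,d\mu_\rho(w)
\]
almost surely as $n\to\infty$, where $\nu_{n,z}$ denotes the empirical measure of the singular values of $n^{-1/2}X_n-zI$ (the first equality holds because $\prod_i\abs{\lambda_i(n^{-1/2}X_n)-z}=\abs{\det(n^{-1/2}X_n-zI)}=\prod_j s_j(n^{-1/2}X_n-zI)$). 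A preliminary truncation step replaces $(\psi_1,\psi_2)$ by bounded atom variables; the induced perturbation of the matrix has small Hilbert--Schmidt norm, which changes logarithmic potentials negligibly, so from now on we may assume that all moments of the atoms are finite.

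Fix a generic $z$. \emph{Convergence of the singular value measure.} Introduce the $2n\times2n$ Hermitian matrix
\[
    \mathcal H_n(z)=\begin{pmatrix}0 & n^{-1/2}X_n-zI\\ (n^{-1/2}X_n-zI)^* & 0\end{pmatrix},
\]
whose eigenvalues are $\pm$ the singular values of $n^{-1/2}X_n-zI$. Working with the resolvent $(\mathcal H_n(z)-\eta I)^{-1}$ for $\eta$ in the upper half-plane and using the independence of the pairs $(x_{ij},x_{ji})$, one derives a self-consistent (matrix Dyson / quadratic vector) equation in which the correlation $\rho=\E[\psi_1\psi_2]$ enters through the coupling between the two diagonal blocks of the resolvent. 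Solving this equation identifies a deterministic limit $\nu_z$ for $\nu_{n,z}$; concentration of the normalized trace of the resolvent together with a stability analysis of the self-consistent equation promotes this to almost sure weak convergence. A direct computation then shows $\int_0^\infty\log x\,d\nu_z(x)=\int_\C\log\abs{z-w}\,d\mu_\rho(w)$, the right-hand side being the classical logarithmic potential of the uniform measure on the ellipse $E_\rho$ (a quadratic function of $z$ inside $E_\rho$, matching a conformal map outside).

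\emph{Uniform integrability of the logarithm.} It remains to prevent mass of $\nu_{n,z}$ from escaping to $0$ or to $\infty$. For the large singular values, $\norm{n^{-1/2}X_n}=O(1)$ with overwhelming probability (moment method, using the truncation), so $\log x$ is bounded above on the spectrum. For the small singular values, one needs a polynomial lower bound on the least singular value, $s_n(n^{-1/2}X_n-zI)\geq n^{-B}$ with probability at least $1-O(n^{-c})$, together with a crude estimate of the form $\#\{i:s_i(n^{-1/2}X_n-zI)\leq n^{-\delta}\}=o(n)$, which follows from the convergence of $\nu_{n,z}$ and the regularity of $\nu_z$ at the origin. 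Together with the convergence of $\nu_{n,z}$, these bounds give $\int\log x\,d\nu_{n,z}(x)\to\int\log x\,d\nu_z(x)$ almost surely, and Girko's replacement principle then yields the stated convergence of $\mu_{n^{-1/2}X_n}$.

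\emph{Main obstacle.} The hard part is the lower bound on the least singular value: because the entries are correlated across the diagonal, the small-ball and Littlewood--Offord arguments of Rudelson--Vershynin and Tao--Vu do not apply directly, and one must exploit the independence of the pairs $(x_{ij},x_{ji})$ and adapt the compressible/incompressible decomposition together with the distance-to-a-random-subspace estimates to this dependency structure. A secondary difficulty is carrying out the stability analysis of the self-consistent equation with enough uniformity in $z$ to cover almost every $z$ at once, and ensuring the exceptional-probability bounds are summable so that Borel--Cantelli delivers the almost sure statement.
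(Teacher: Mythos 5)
Your overall skeleton---Hermitization via logarithmic potentials, convergence of the singular value measures of $n^{-1/2}X_n-zI_n$ through a resolvent self-consistent equation in which $\rho$ couples the two blocks, a least singular value bound adapted to the correlated pairs $(x_{ij},x_{ji})$, and a replacement principle with Borel--Cantelli for the almost sure statement---is exactly the route taken in the cited proof of this theorem (Nguyen--O'Rourke, building on Naumov) and in the present paper's generalization. You also correctly single out the least singular value bound under the elliptic dependence as a key difficulty.

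However, your uniform integrability step has a genuine gap. You propose to handle the small singular values using only (a) $s_n(n^{-1/2}X_n-zI_n)\geq n^{-B}$ with high probability and (b) the counting estimate $\#\set{i: s_i\leq n^{-\delta}}=o(n)$ deduced from the weak convergence of $\nu_{n,z}$. This is not enough: the indices with $n^{-B}\leq s_i\leq n^{-\delta}$ each contribute up to $B\log n$ to $\frac1n\sum_i\abs{\log s_i}$, and weak convergence only gives $\frac1n\#\set{i:s_i\leq n^{-\delta}}\leq C t+o(1)$ for each \emph{fixed} $t>0$, with no rate in $n$; the resulting bound $Ct\cdot B\log n$ diverges, so the claimed conclusion $\int\log x\,d\nu_{n,z}\to\int\log x\,d\nu_z$ does not follow. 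What is actually needed (and what the paper proves in Proposition \ref{prop:othersingularvalues}, following Tao--Vu) is a polynomial lower bound on the \emph{intermediate} singular values, $\sigma_{n-i}(n^{-1/2}X_n-zI_n)\geq c_0\,i/n$ for all $n^{1-\gamma}\leq i\leq n-1$, obtained from the negative second moment identity together with concentration of the distance from a row to the span of the others (Lemma \ref{lem:disttosubspace}); summing $\abs{\log(c_0 i/n)}$ then gives a convergent Riemann sum and hence uniform integrability. Note also that this distance argument must itself be adapted to the elliptic dependence (in the paper, by deleting the $j$-th column so the $j$-th row becomes independent of the remaining rows), a point your sketch does not address; and your remark that the truncation changes ``logarithmic potentials negligibly'' is too strong---a small Hilbert--Schmidt perturbation controls the Lévy distance of singular value distributions, but not the log potential, so uniform integrability must be established for the untruncated matrix.
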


Under technical assumptions on a complex-valued atom variable $(\psi_1,\psi_2)$, \cite{ellipticlaw} also proves weak almost sure convergence of the ESMs $\mu_{n^{-1/2}X_n}\to\mu_{\rho}$ as $n\to\infty$.

The main aim of this paper is to prove Theorem \ref{thm:maintheorem}, a result about the limiting behavior of ESMs coming from an elliptic matrix model that is also sparse. See Figures \ref{fig:sfig1} and \ref{fig:sfig2} for a visualization of how the eigenvalues of sparse and dense elliptic matrices behave similarly. Figure \ref{fig:sfig1} features a plot of eigenvalues of a dense elliptic random matrix coming from Definition \ref{def:ellipticmodel} with real standard Gaussian atom variables having correlation $\rho = 0.5$. Figure \ref{fig:sfig2} features a plot of eigenvalues of a sparse elliptic random matrix coming from Definition \ref{def:sparseellipticmodel}. Here the atom variables are again distributed as real standard Gaussians with correlation $\rho_1=0.5$ and the Bernoulli variables have mean $p_n = n^{-1/2}$ and correlation $\rho_{2,n} = 0.5$.

\begin{figure}
\begin{subfigure}{.5\textwidth}
  \centering
  \includegraphics[width=.8\linewidth]{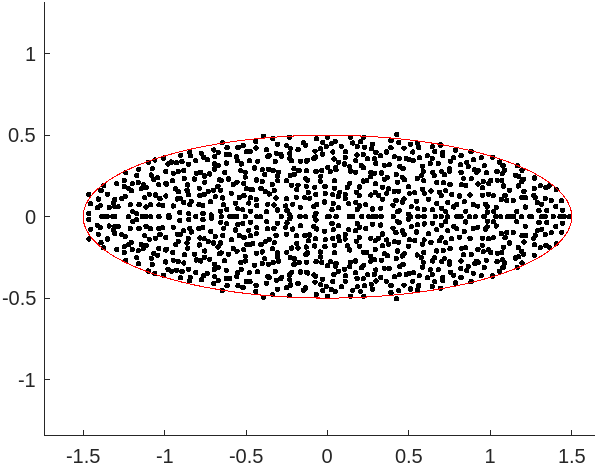}
  \caption{$\rho_1=0.5$, $\theta=0$, $p_n=1$,$\tau_n=0$}
  \label{fig:sfig1}
\end{subfigure}%
\begin{subfigure}{.5\textwidth}
  \centering
  \includegraphics[width=.8\linewidth]{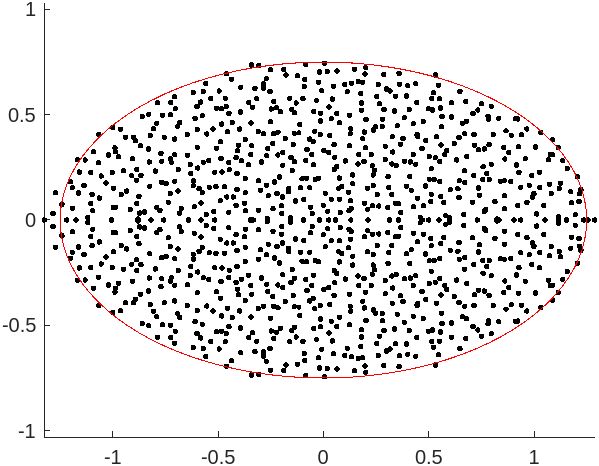}
  \caption{$\rho_1=0.5$, $\theta=0$, $p_n=n^{-1/2}$, $\rho_{2,n}=0.5$, $\tau_n = 0.5(1-n^{-1/2})$}
  \label{fig:sfig2}
\end{subfigure}
\caption{Plots of eigenvalues of $1000\times 1000$ real Gaussian elliptic matrices $(np_n)^{-1/2}X_n$ (so $n=1000$) with $\xi_1,\xi_2\sim\cal{N}_{\R}(0,1)$. The red ellipse plotted in Figure \ref{fig:sfig1} is $E_{0.5}$ and the red ellipse plotted in Figure \ref{fig:sfig2} is $E_{0.25}$.}
\label{fig:fig}
\end{figure}

\subsection{Main results}

The main theorem of this paper is concerned with the convergence of the ESMs $\mu_{M_n}$ of the normalized sparse elliptic random matrix $M_n:=(np_n)^{-1/2}(X_n+F_n)$. Here $F_n$ is a deterministic $n\times n$ matrix over $\C$.
This $M_n$ notation will be used subsequently throughout the paper to denote adding the perturbation $F_n$ to our sparse elliptic random matrix $X_n$ and scaling by $(np_n)^{-1/2}$. Broadly, our result proves the weak, in probability convergence of the ESMs $\mu_{M_n}$ to the uniform measure supported on a rotated ellipsoid in the complex plane, as the dimension $n\to\infty$.

This work, of course inspired by the previous work done in sparse i.i.d.\ and dense elliptic regimes and the foundation laid by Girko, also has roots in the physics literature \cite{physics1,physics2,physics3,physicspaper1,dutta2025spectraldistributionsparsegaussian}. For a non-exhaustive list of more work in elliptic random matrices, we refer the reader to \cite{Akemann_2023,fyodorov,O_Rourke_2015,largeelliptic,G_tze_2015,andrewelliptic,newelliptic2,Byun_2021} and references therein.

Now, for the main, general result of this paper.

\begin{thm}\label{thm:maintheorem}
	Let $(X_n)_{n\geq1}$ be a sparse elliptic random matrix model coming from Definition \ref{def:sparseellipticmodel} given by the complex-valued atom variable $(\xi_1,\xi_2)$ with $\E[\xi_1\xi_2] = \rho_1e^{i\theta}$ ($\rho_1\in[0,1)$, $\theta\in[0,2\pi)$), sparsity parameters $(p_n)_{n\geq1}$ with limit $p$, and parameter $\tau$. Furthermore, if $(F_n)_{n\geq 1}$ is a sequence of deterministic matrices over $\C$ satisfying $\rank(F_n) = o(n)$\footnote{In other words, $\frac{1}{n}\rank(F_n)\to0$ as $n\to\infty$.} and 
    \begin{equation}\label{eq:boundedhilbertschmidt}
        \sup_n\frac{1}{n^2p_n}\norm{F_n}_{HS}^2 = \sup_n\frac{1}{n^2p_n}\sum_{1\leq i,j\leq n}\abs{(F_n)_{ij}}^2<\infty,
    \end{equation} then with $M_n = (np_n)^{-1/2}(X_n+F_n)$,
	\[
		\mu_{M_n}\to \mu_{\rho_1,\theta,\tau,p} \text{ weakly in probability as }n\to\infty.
	\]
	Here $\mu_{\rho_1,\theta,\tau,p}$ is the uniform measure on the ellipsoid
    \[
        e^{i\theta/2}E_{\rho_1(\tau+p)} = \set{e^{i\theta/2}z:z\in E_{\rho_1(\tau+p)}},
    \]
    where $E_{\rho_1(\tau+p)}$ is as in \eqref{eq:ellipse}.
    \end{thm}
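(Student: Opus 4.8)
The overall strategy is Girko's Hermitization, organized so that the sparsity enters the limit \emph{only} through an effective correlation $\rho:=\rho_1(\tau+p)$.

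\textbf{Preliminary reductions.} First I would reduce to $\theta=0$: replacing $(X_n,F_n)$ by $(e^{-i\theta/2}X_n,e^{-i\theta/2}F_n)$ rotates the atom correlation to $e^{-i\theta}\rho_1e^{i\theta}=\rho_1\in[0,1)$ and pushes $\mu_{M_n}$ forward under $w\mapsto e^{-i\theta/2}w$; since the uniform measure on $e^{i\theta/2}E_{\rho_1(\tau+p)}$ is the pushforward of the uniform measure on $E_{\rho_1(\tau+p)}$ under $w\mapsto e^{i\theta/2}w$, it suffices to treat $\theta=0$ and prove convergence to the uniform measure on $E_\rho$. By Remark~\ref{rem:pnequalsone} (using $\tau_n=\rho_{2,n}(1-p_n)$) one has $\tau+p\in[-1,1]$, so $|\rho|<1$ and $E_\rho$ is a genuine ellipse. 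A standard truncation and recentering of the atom entries at a slowly growing level reduces to bounded atoms (the removed part is $o(\sqrt n)$ in Hilbert--Schmidt norm after normalization, and the flexible diagonal is an $O(1)$-Hilbert--Schmidt perturbation), while the hypothesis \eqref{eq:boundedhilbertschmidt} together with $\E|\xi_j|^2=1$ ensures $\tfrac1n\norm{M_n}_{HS}^2=O(1)$, which will supply tightness of the singular value measures below.

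\textbf{Hermitization.} I then invoke the quantitative Hermitization/replacement lemma: $\mu_{M_n}\to\mu$ weakly in probability provided that for Lebesgue-a.e.\ $z\in\C$ one has (a) the symmetrized empirical singular value distribution $\nu_n^z$ of $M_n-zI$ converges weakly in probability to a deterministic measure $\nu_z$; and (b) $\log(\cdot)$ is uniformly integrable in probability against $\{\nu_n^z\}_n$, which here amounts to $\sigma_{\min}(M_n-zI)\ge n^{-C}$ with probability $1-o(1)$ together with $\tfrac1n\#\{i:\sigma_i(M_n-zI)\le\eta\}=o_\eta(1)$ uniformly with high probability. Under (a) and (b), $U_{\mu_{M_n}}(z):=-\int_\C\log|w-z|\,d\mu_{M_n}(w)\to-\int_0^\infty\log x\,d\nu_z(x)=:U(z)$ in probability for a.e.\ $z$, and $\mu=\tfrac1{2\pi}\Delta U$.

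\textbf{The Hermitian sub-problem and the limiting measure.} For (a), fix $z$ and analyse the $2n\times 2n$ Hermitian matrix $H_n(z)=\begin{pmatrix}0 & M_n-zI\\ (M_n-zI)^*&0\end{pmatrix}$; the rank inequality deletes $F_n$ (rank $o(n)$), so $\nu_z$ is the limiting symmetrized singular value distribution of $(np_n)^{-1/2}X_n-zI$. A resolvent analysis --- Schur complement plus martingale/McDiarmid concentration of the Stieltjes transform, where $np_n\to\infty$ and boundedness are used --- produces a self-consistent equation. The crucial point is that the normalized entries $(np_n)^{-1/2}\delta_{ij}^{(n)}x_{ij}$ have variance $\tfrac{p_n}{np_n}=\tfrac1n$, exactly as in the dense case, whereas their mirror second moment is $\E\big[(np_n)^{-1}\delta_{ij}^{(n)}\delta_{ji}^{(n)}x_{ij}x_{ji}\big]=\tfrac{p_n(\tau_n+p_n)}{np_n}\rho_1=\tfrac{\rho_1(\tau_n+p_n)}{n}\to\tfrac{\rho}{n}$; hence the self-consistent equation coincides with that of the dense elliptic model at spectral parameter $z$ with correlation $\rho$, and $\nu_z$ equals the limiting symmetrized singular value distribution of $n^{-1/2}G_n-zI$ for a real Gaussian elliptic matrix $G_n$ with $\E[\psi_1\psi_2]=\rho$ (the within-entry pseudo-variance $\E[\xi_1^2]$ does not enter this limit, as in the i.i.d.\ case; a Lindeberg swap matching the full second-order structure of $(\xi_1,\xi_2)$ handles it). To identify $\mu$ I would avoid integrating $\log$ against the explicit density of $\nu_z$ and instead bootstrap: by Theorem~\ref{thm:ellipticlaw} the Gaussian model satisfies $\mu_{n^{-1/2}G_n}\to\mu_\rho$ (uniform on $E_\rho$), and it trivially satisfies (a) and (b) (least singular value bounds for additively shifted Gaussian matrices are classical), so $U_{\mu_\rho}(z)=-\int_0^\infty\log x\,d\nu_z(x)=U(z)$ for a.e.\ $z$. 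Therefore $\mu=\mu_\rho$, and undoing the rotation yields the claimed limit $\mu_{\rho_1,\theta,\tau,p}$. (Alternatively one may cite the complex dense elliptic law under its technical hypotheses, which a Gaussian comparison model meets, or evaluate $-\int\log x\,d\nu_z$ in closed form as in Naumov's computation.)

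\textbf{Main obstacle.} The serious work is requirement (b) at the minimal sparsity $np_n\to\infty$. One needs a polynomial lower bound $\sigma_{\min}(X_n+D_n)\ge n^{-C}$ holding with probability $1-o(1)$ for an \emph{arbitrary} deterministic shift $D_n$ --- here $D_n=F_n-(np_n)^{1/2}zI$, with the flexible diagonal absorbed into $D_n$ by conditioning --- for the \emph{pairwise dependent} sparse ensemble, in the regime where even for i.i.d.\ entries the invertibility arguments of Rudelson--Tikhomirov and Sah--Sahasrabudhe--Sawhney are required; and one also needs intermediate singular value estimates of the form $\sigma_{n-k}(X_n+D_n)\gtrsim k/n$ (in the spirit of Tao--Vu and Nguyen) to suppress the logarithmic singularity at the origin. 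Adapting this least-singular-value machinery to the elliptic (mirrored) dependence and to the low-rank-plus-scalar deterministic perturbation is the technical heart of the argument; everything else is the routine circular-law bookkeeping above.
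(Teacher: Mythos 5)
Your skeleton (rotate to $\theta=0$, Hermitization, concentration of the Stieltjes transform plus truncation and a Lindeberg swap onto a Gaussian elliptic model with effective correlation $\rho=\rho_1(\tau+p)$, then identification of the limit through logarithmic potentials and the known dense elliptic law) is the same route the paper takes, and your observation that the normalized entries have variance exactly $1/n$ while the mirrored second moment becomes $\rho_1(\tau_n+p_n)/n$ is precisely the mechanism driving the result. One caveat on the identification step: matching the full real $4\times 4$ second-order structure in the swap produces a \emph{complex} Gaussian comparison model whose pseudo-covariances $\E[\psi_1^2],\E[\psi_1\overline{\psi_2}],\dots$ are in general nonzero, so the claim that ``the pseudo-variance does not enter'' is not something the swap itself delivers; it is proved in the paper by deriving the self-consistent equation for the block resolvent and checking that it involves only $\rho$ (Lemma \ref{lem:determinetheellipticlaw} and the appendix), after which one may specialize the $\mu_{ij}$ to land in the $(\tfrac12,\rho)$-family of the dense elliptic paper. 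Your sentence about the self-consistent equation gestures at this, but as written it is an assertion rather than an argument, and it is exactly the point that settles the complex-correlation conjecture.

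The genuine gap is step (b). You correctly identify that uniform integrability of the logarithm requires a polynomial lower bound on $\sigma_{\min}$ of the shifted sparse elliptic matrix and intermediate bounds $\sigma_{n-k}\gtrsim k/n$, but you supply no argument, and the route you point to --- adapting the sparse i.i.d.\ invertibility machinery of Rudelson--Tikhomirov and Sah--Sahasrabudhe--Sawhney to mirrored dependence --- is both unnecessary and not obviously feasible (those arguments are built around independent rows/columns and much finer probability bounds than are needed here). Because the Hermitization lemma only asks for $\sigma_n(M_n-zI_n)\geq n^{-r}$ \emph{with probability $1-o(1)$}, a much softer argument suffices: rescale to $Z_n=p_n^{-1/2}X_n$, whose entries have unit variance and conditional correlation bounded away from $1$ uniformly in $n$, and apply the existing least-singular-value bound for matrices with pairwise-dependent entries (Theorem \ref{thm:andrewsmallsingval}) together with a crude Hilbert--Schmidt/Markov bound on $\sigma_1(M_n-zI_n)$; this is Proposition \ref{prop:leastsingularvalue}. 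The intermediate singular values are then handled by the Tao--Vu negative second moment identity plus a distance-to-subspace estimate (Talagrand concentration, Lemma \ref{lem:disttosubspace}), in which the sparsity cancels in the variance computation since $\var(p_{n}^{-1/2}\delta\xi)=1$; this gives Proposition \ref{prop:othersingularvalues}. Without some version of these two inputs your proposal does not yield condition (b), so the proof as proposed is incomplete at its self-declared ``technical heart.''
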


    \begin{remark}
        If the reader wishes to perturb the matrix $X_n$ \textit{after} scaling so that $M_n = (np_n)^{-1/2}X_n +F_n$ for some deterministic matrix $F_n$ over $\C$, then $F_n$ still needs $\rank(F_n) = o(n)$. On the other hand, Theorem \ref{thm:maintheorem} remains the same, but we must replace \eqref{eq:boundedhilbertschmidt} with 
        $$\sup_n\frac{1}{n}\norm{F_n}_{HS}^2 = \sup_n\frac{1}{n}\sum_{1\leq i,j\leq n}\abs{(F_n)_{ij}}^2<\infty.$$
    \end{remark}

    \begin{remark}
        In the event that $p_n<1$ for all $n\geq 1$ and the correlations $\rho_{2,n} =\corr(\eta_1^{(n)},\eta_2^{(n)})$ coming from Definition \ref{def:Bernoullimatrix} converge to some limit $\rho_2$ as $n\to\infty$, then $\tau$ appearing in Theorem \ref{thm:maintheorem} may be replaced with $\tau=\rho_2(1-p)$.
    \end{remark}

    Aside from dealing with the complex version of $\E[\xi_1\xi_2] = \rho_1e^{i\theta}$ appearing in the hypothesis of Theorem \ref{thm:maintheorem}, one of the more unexpected and interesting  aspects of this result is the dependence on the sparsity limits $p$ and $\tau$. For a dense elliptic random matrix $\frac{1}{\sqrt{n}}X_n$ coming from Definition \ref{def:ellipticmodel}, the shape of the limiting ellipse depends on the value $\E[(X_n)_{12}(X_n)_{21}] = \E[\xi_1\xi_2]$. In the sparse regime, we can also recover the dependence of the limiting ellipse on our parameters similarly. If we view a sparse elliptic random matrix $\frac{1}{\sqrt{np_n}}X_n$ (coming from Definition \ref{def:sparseellipticmodel}) as $\frac{1}{\sqrt{np_n}}X_n=\frac{1}{\sqrt{n}}\left(\frac{1}{\sqrt{p_n}}X_n\right)$, then we compute:
    \begin{equation}\label{eq:mixedmoment}
        \E\left[\left(\frac{1}{\sqrt{p_n}}X_n\right)_{12}\left(\frac{1}{\sqrt{p_n}}X_n\right)_{21}\right]=\frac{1}{p_n}\E[\eta_{1}^{(n)}\xi_{1}\eta_2^{(n)}\xi_2] = \rho_1e^{i\theta}(\tau_n+p_n)\to\rho_1e^{i\theta}(\tau+p).
    \end{equation}
    Modulo the appearance of the phase shift $e^{i\theta}$ (which we will deal with in the next section), we may read off the dependence of $\rho_1$, $\tau$, and $p$ in the limit here.
    See Figure \ref{fig:pfigs} for some simulations involving varying $p$-values.
    
    The main result of this paper recovers many of the classical results for i.i.d.\ and elliptic random matrices in both dense and sparse regimes at high level of generality in the atom variables that make up our model. The ability to handle complex-valued correlation $\E[\xi_1\xi_2]$ and the rotation included in the statement of Theorem \ref{thm:maintheorem} is also a new result and settles a conjecture made in \cite{ellipticlaw}, even in the dense regime.

    We now give some corollaries that are realizations of the main theorem with specific conditions on our parameters. To recover a result akin to Theorem \ref{thm:sparsecircularlaw}, we set $p=\tau=0$ for our Bernoulli variables.

\begin{cor}
    Suppose $(X_n)_{n\geq1}$ is a sparse elliptic random matrix model coming from Definition \ref{def:sparseellipticmodel} given by the complex-valued atom variable $(\xi_1,\xi_2)$ with $\E[\xi_1\xi_2]=\rho_1e^{i\theta}$ with $\rho_1\in[0,1)$ and $\theta\in[0,2\pi)$, sparsity parameters $(p_n)_{n\geq1}$ with limit $p=0$, and $\tau=0$. Then the ESMs $\mu_{(np_n)^{-1/2}X_n}$ converge weakly in probability to the uniform measure on the unit disk in the complex plane.
\end{cor}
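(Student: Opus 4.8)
The plan is to obtain this corollary as a direct specialization of Theorem~\ref{thm:maintheorem}. First I would apply Theorem~\ref{thm:maintheorem} with the trivial perturbation $F_n = 0$ for every $n\geq1$. Then $\rank(F_n) = 0 = o(n)$ and $\frac{1}{n^2 p_n}\norm{F_n}_{HS}^2 = 0$, so both hypotheses imposed on $(F_n)_{n\geq1}$ hold automatically, and $M_n = (np_n)^{-1/2}(X_n + F_n) = (np_n)^{-1/2}X_n$ is exactly the matrix appearing in the corollary. Theorem~\ref{thm:maintheorem} therefore yields $\mu_{(np_n)^{-1/2}X_n}\to\mu_{\rho_1,\theta,\tau,p}$ weakly in probability, and it only remains to identify the limiting measure under the choice $p=\tau=0$.

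With $p = \tau = 0$ we have $\rho_1(\tau+p) = 0$, so the ellipsoid in the statement of Theorem~\ref{thm:maintheorem} is $E_0$, which by \eqref{eq:ellipse} is
\[
    E_0 = \set{z\in\C : \Real(z)^2 + \Imag(z)^2 \leq 1} = \set{z\in\C : \abs{z}\leq 1},
\]
the closed unit disk. Since the unit disk is invariant under the rotation $z\mapsto e^{i\theta/2}z$, we get $e^{i\theta/2}E_0 = E_0$, and hence $\mu_{\rho_1,\theta,0,0}$, the uniform measure on $e^{i\theta/2}E_0$, is precisely the uniform measure on the unit disk in the complex plane. Combining this with the weak convergence in probability from the previous paragraph completes the argument.

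All of the analytic content is carried by Theorem~\ref{thm:maintheorem}, so there is no genuine obstacle here; the only point to verify is that the degenerate parameter choice $p=\tau=0$ collapses the rotated ellipsoid $e^{i\theta/2}E_{\rho_1(\tau+p)}$ to the unit disk, which it does for every $\rho_1\in[0,1)$ and $\theta\in[0,2\pi)$. This is consistent with the heuristic computation \eqref{eq:mixedmoment}: viewing $(np_n)^{-1/2}X_n = n^{-1/2}\bigl(p_n^{-1/2}X_n\bigr)$, the effective mixed moment $\rho_1 e^{i\theta}(\tau_n + p_n)$ of the rescaled entries tends to $\rho_1 e^{i\theta}(\tau+p) = 0$, so even though the atom variables $\xi_1,\xi_2$ may be correlated, the sparsity limit washes the correlation out and one recovers the circular law.
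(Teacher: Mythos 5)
Your argument is correct and is essentially the same as the paper's: both apply Theorem \ref{thm:maintheorem} with $F_n=0$ and observe that $p=\tau=0$ forces $\rho_1(\tau+p)=0$, so the rotated ellipsoid $e^{i\theta/2}E_0$ is just the unit disk. Your extra verification of the rank and Hilbert--Schmidt hypotheses for $F_n=0$ is a harmless elaboration of the same route.
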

\begin{proof}
    With $p = \tau=0$, and $\E[\xi_1\xi_2]=\rho_1e^{i\theta}$, Theorem \ref{thm:maintheorem} with $F_n=0$ gives that the ESMs $\mu_{(np_n)^{-1}X_n}$ converge weakly in probability to the uniform measure supported on
    \[
        e^{i\theta/2}E_0 = E_0 = \{z\in\C:\abs{z}\leq 1\},
    \]
    the unit disk in the complex plane.
\end{proof}

\begin{comment}
Note here that we still get a result like the circular law even if our atom variables $\xi_1$ and $\xi_2$ are neither independent nor uncorrelated. With no assumptions on the non-Bernoulli variable, we still get a desirable result if we guarantee convergence of the sparsity parameters $p_n$ and $\tau_n$ to $0$. On the other hand, if we have independence of the non-Bernoulli variables or know $\rho_1=0$, then we get a result akin to Theorem \ref{thm:sparsecircularlaw} with no assumption on the Bernoulli variables other than the convergence of $p_n$ and $\tau_{n}$ assumed in the hypotheses of our definitions.    
\end{comment}

To recover a result akin to Theorem \ref{thm:ellipticlaw} with rotated spectrum and a perturbation, we merely take $p_n=1$ for all $n\geq1$.
\begin{cor}
    Suppose $(X_n)_{n\geq1}$ is a dense elliptic random matrix model coming from Definition \ref{def:ellipticmodel} given by the complex-valued atom variable $(\xi_1,\xi_2)$ satisfying $\E[\xi_1\xi_2]=\rho_1e^{i\theta}$ with $\rho_1\in[0,1)$ and $\theta\in[0,2\pi)$. If $(F_n)_{n\geq 1}$ is a sequence of deterministic matrices over $\C$ satisfying $\rank(F_n)=o(n)$ and \eqref{eq:boundedhilbertschmidt}, then the ESMs $\mu_{n^{-1/2}(X_n+F_n)}$ converge weakly in probability to the uniform measure on the ellipsoid $e^{i\theta/2}E_{\rho_1}$, where $E_{\rho_1}$ is as given in \eqref{eq:ellipse}.
\end{cor}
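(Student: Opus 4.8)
The plan is to derive this immediately from Theorem \ref{thm:maintheorem} by specializing the sparsity parameters, exactly in the spirit of the previous corollary. The first observation is that, by Definition \ref{def:ellipticmodel}, a dense elliptic random matrix model with atom variable $(\xi_1,\xi_2)$ \emph{is}, by definition, a sparse elliptic random matrix model (Definition \ref{def:sparseellipticmodel}) whose sparsity parameters satisfy $p_n = 1$ for all $n \geq 1$. Thus every structural hypothesis of Theorem \ref{thm:maintheorem} concerning the random matrix $X_n$ is automatically satisfied, and no separate verification is needed.

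Next I would read off the limiting parameters in this degenerate regime. Since $p_n = 1$ for every $n$, we have $p = \lim_{n\to\infty} p_n = 1$. By Remark \ref{rem:pnequalsone} and the remark following Definition \ref{def:ellipticmodel}, the covariance $\cov(\eta_1^{(n)},\eta_2^{(n)})$ vanishes identically, so $\tau_n = 0$ for all $n$ and hence $\tau = 0$. Consequently $\rho_1(\tau + p) = \rho_1(0 + 1) = \rho_1$, which is the value that will appear in the limiting ellipsoid. As for the perturbation, the rank condition $\rank(F_n) = o(n)$ is assumed outright, and because $p_n = 1$ the Hilbert--Schmidt hypothesis \eqref{eq:boundedhilbertschmidt} reduces to $\sup_n \frac{1}{n^2}\norm{F_n}_{HS}^2 < \infty$, which is precisely the condition imposed in the corollary.

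With all hypotheses of Theorem \ref{thm:maintheorem} in place, the theorem gives $\mu_{M_n} \to \mu_{\rho_1,\theta,\tau,p}$ weakly in probability, where $M_n = (np_n)^{-1/2}(X_n + F_n) = n^{-1/2}(X_n + F_n)$ since $p_n \equiv 1$, and where $\mu_{\rho_1,\theta,\tau,p}$ is the uniform measure on $e^{i\theta/2} E_{\rho_1(\tau+p)} = e^{i\theta/2} E_{\rho_1}$, with $E_{\rho_1}$ as in \eqref{eq:ellipse}. This is exactly the claimed statement. Because the corollary is a direct specialization of the main theorem, there is no real obstacle to the argument; the only points requiring (minimal) care are bookkeeping ones — namely confirming that the dense model genuinely falls under Definition \ref{def:sparseellipticmodel}, that the normalization $(np_n)^{-1/2}$ collapses to $n^{-1/2}$, and that $\tau + p$ evaluates to $1$ rather than, say, $0$.
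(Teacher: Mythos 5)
Your proposal is correct and follows essentially the same route as the paper: both set $p_n\equiv 1$, invoke Remark \ref{rem:pnequalsone} to get $\tau=0$ (hence $p=1$ and $\rho_1(\tau+p)=\rho_1$), and apply Theorem \ref{thm:maintheorem} directly. The extra bookkeeping you include (that the dense model is a special case of Definition \ref{def:sparseellipticmodel} and that the normalization collapses to $n^{-1/2}$) is accurate but just makes explicit what the paper leaves implicit.
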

\begin{proof}
    With $p_n=1$ for all $n\geq 1$, we also have $p=1$. From Remark \ref{rem:pnequalsone}, $\tau = 0$. Thus, by Theorem \ref{thm:maintheorem}, we have the weak in probability convergence of the ESMs $\mu_{(np_n)^{-1/2}(X_n+F_n)}$ to the uniform measure supported on $e^{i\theta/2}E_{\rho_1}$.
\end{proof}

Note here that we are not imposing the specialized covariance structure on the real vector $$(\Real(\xi_1),\Imag(\xi_1),(\Real(\xi_2),\Imag(\xi_2))^T$$ as is done in the main results of \cite{ellipticlaw}.

In another interesting corollary, we can also examine a more general case when the Bernoulli variables form a symmetric matrix.

\begin{cor}
    Suppose $(X_n)_{n\geq1}$ is a sparse elliptic random matrix model coming from Definition \ref{def:sparseellipticmodel} given by the complex-valued atom variable $(\xi_1,\xi_2)$ satisfying
    $\E[\xi_1\xi_2]=\rho_1e^{i\theta}$ with $\rho_1\in[0,1)$ and $\theta\in[0,2\pi)$ and sparsity parameters $p_n<1$ for all $n\geq1$. In this case, if the correlations $\rho_{2,n} = \corr(\eta_1^{(n)},\eta_2^{(n)})$ satisfy $\rho_{2,n}=1$ for all $n\geq1$, then the ESMs $\mu_{(np_n)^{-1/2}X_n}$ converge weakly in probability to the uniform measure on the ellipsoid $e^{i\theta/2}E_{\rho_1}$, where $E_{\rho_1}$ is as given in \eqref{eq:ellipse}.
\end{cor}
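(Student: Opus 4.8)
The plan is to deduce this corollary directly from Theorem \ref{thm:maintheorem}, the entire content being the identification of the parameter $\tau$. First I would invoke Remark \ref{rem:pnequalsone}: since $p_n < 1$ for every $n \geq 1$, the correlations $\rho_{2,n} = \corr(\eta_1^{(n)},\eta_2^{(n)})$ are well defined and satisfy $\tau_n = \rho_{2,n}(1-p_n)$. Under the present hypothesis $\rho_{2,n} = 1$ for all $n$, this becomes $\tau_n = 1 - p_n$.

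Next I would pass to the limit. Writing $p = \lim_{n\to\infty} p_n \in [0,1]$, which exists by Definition \ref{def:Bernoullimatrix}(i), we obtain $\tau = \lim_{n\to\infty}\tau_n = 1 - p$. In particular $\tau + p = 1$, so $\rho_1(\tau+p) = \rho_1$, and the limiting ellipsoid furnished by Theorem \ref{thm:maintheorem} is $e^{i\theta/2}E_{\rho_1(\tau+p)} = e^{i\theta/2}E_{\rho_1}$.

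Finally I would apply Theorem \ref{thm:maintheorem} with the trivial perturbation $F_n = 0$: the rank condition $\rank(F_n) = o(n)$ and the normalized Hilbert--Schmidt bound \eqref{eq:boundedhilbertschmidt} then hold vacuously, and the remaining hypotheses ($\rho_1 \in [0,1)$, $\theta \in [0,2\pi)$, sparsity parameters with limit $p$, parameter $\tau$) are precisely those assumed in the corollary. Hence $\mu_{(np_n)^{-1/2}X_n} = \mu_{M_n} \to \mu_{\rho_1,\theta,\tau,p}$ weakly in probability, and by the previous paragraph this limit is the uniform measure on $e^{i\theta/2}E_{\rho_1}$.

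There is no real obstacle here: the corollary is a bookkeeping consequence of the main theorem. The only step meriting a moment's care is the chain $\rho_{2,n} = 1 \Rightarrow \tau_n = 1 - p_n \Rightarrow \tau + p = 1$ supplied by Remark \ref{rem:pnequalsone}, which is also the conceptual point — perfectly correlated Bernoulli masks force $B_n$ to be symmetric, and the sparsity contribution $\tau + p$ appearing in \eqref{eq:mixedmoment} saturates at $1$, so the limiting shape coincides with that of the dense elliptic law associated to the correlation $\rho_1 e^{i\theta}$.
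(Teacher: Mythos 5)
Your proposal is correct and follows essentially the same route as the paper: use $\tau_n=\rho_{2,n}(1-p_n)=1-p_n$ from Remark \ref{rem:pnequalsone}, pass to the limit to get $\tau+p=1$, and apply Theorem \ref{thm:maintheorem} with $F_n=0$ so the limiting ellipsoid is $e^{i\theta/2}E_{\rho_1}$. No gaps.
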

\begin{proof}
    With $p_n<1$ for all $n\geq 1$, we have that $\tau_n = \rho_{2,n}(1-p_n)$. Thus, if $\rho_{2,n}=1$ for all $n\geq 1$, then $\tau_n\to1-p=\tau$ so that $\tau+p=1$. Thus, by Theorem \ref{thm:maintheorem}, we have the weak in probability convergence of the ESMs $\mu_{(np_n)^{-1/2}X_n}$ to the uniform measure supported on $e^{i\theta/2}E_{\rho_1}$.
\end{proof}

\begin{remark}
    Note that for this result, besides the convergence, $\tau_n\to\tau$, $p_n\to p$ and $np_n\to\infty$ as $n\to\infty$, this result has no dependence on the value of $p$ or $\tau$ in the limit.

    For another interesting corner case, whose proof we leave to the reader, we may also turn to the skew-symmetric case when $\rho_{2,n}=-1$ for all $n\geq 1$. In this case, if $p_n=\tfrac12$ for all $n\geq 1$ as well, then we get in probability weak convergence of the ESMs $\mu_{(np_n)^{-1/2}X_n}$ to the uniform measure on the unit disk in the complex plane. 
\end{remark}

    \begin{figure}
\begin{subfigure}{.49\textwidth}
  \centering
  \includegraphics[width=.8\linewidth]{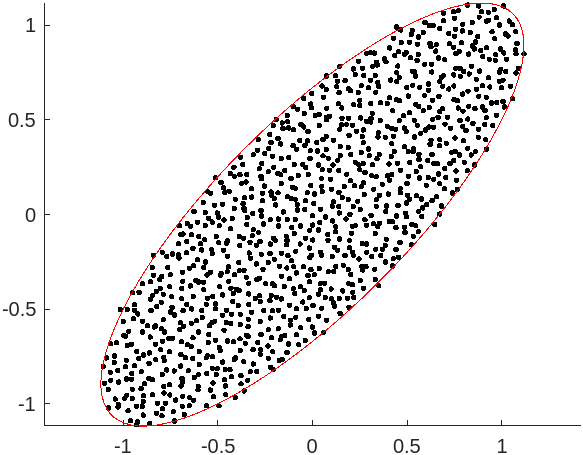}
  \caption{$p_n=\frac{n}{n+1}$}
  \label{fig:pfig1}
\end{subfigure}
\begin{subfigure}{.49\textwidth}
  \centering
  \includegraphics[width=.8\linewidth]{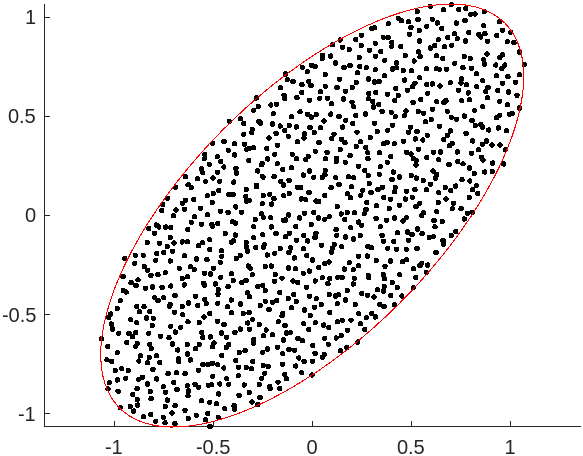}
  \caption{$p_n=\frac{n}{2n+1}$}
  \label{fig:pfig2}
\end{subfigure}
\caption{Plots of eigenvalues of $1000\times 1000$ complex Gaussian sparse elliptic random matrices $(np_n)^{-1/2}X_n$ (so $n=1000$). Throughout, $\rho_1 = \rho_{2,n} = 0.5$ and $\theta = \frac{\pi}{2}$. The two red ellipses above are \eqref{fig:pfig1} $e^{i\pi/4}E_{0.5}$ and \eqref{fig:pfig2} $e^{i\pi/4}E_{0.375}$.}
\label{fig:pfigs}
\end{figure}

    We conclude this subsection with a conjecture for future work, inspired by the work done in \cite{supersparse}. Inherent in the hypothesis of Theorem \ref{thm:maintheorem} is the assumption that the sparsity parameters $(p_n)_{n\geq 1}$ satisfy $np_n\to \infty$. The threshold for these results is when $p_n = \frac{c}{n}$ (for some constant $c>0$) so that $np_n=c$ no longer tends to infinity with $n$. In this regime of sparsity, we conjecture that for every $c>0$, there still exists some deterministic measure $\mu_c$---depending at least on $c$---that is the weak limit of the ESMs of a sparse elliptic random matrix model with $p_n = \tfrac{c}{n}$. Figures \ref{fig:supersparse2}, \ref{fig:supersparse5}, and \ref{fig:supersparse10} show the behavior of eigenvalues of such ``supersparse" elliptic random matrices, with varying values of $c$.

\begin{figure}
\begin{subfigure}{.49\textwidth}
  \centering
  \includegraphics[width=.8\linewidth]{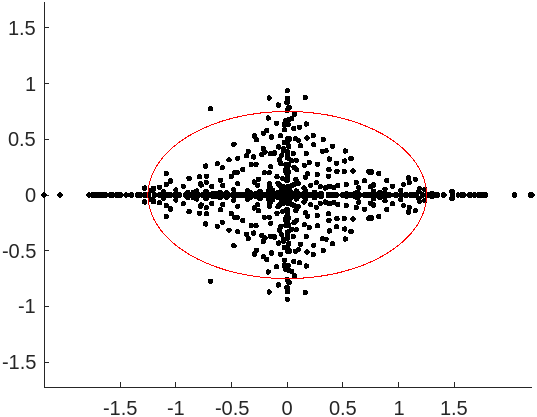}
  \caption{$p_n=\frac{2}{n}$}
  \label{fig:supersparse2}
\end{subfigure}
\begin{subfigure}{.49\textwidth}
  \centering
  \includegraphics[width=.8\linewidth]{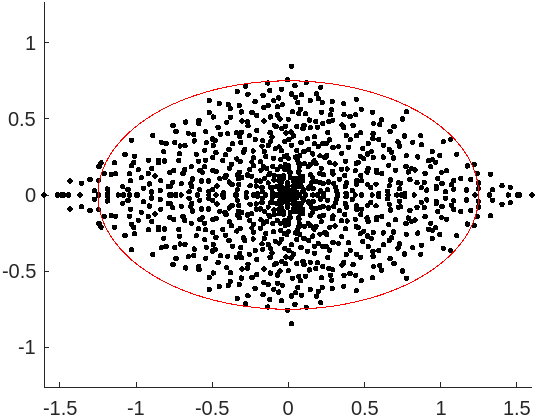}
  \caption{$p_n=\frac{5}{n}$}
  \label{fig:supersparse5}
\end{subfigure}
\begin{subfigure}{.49\textwidth}
  \centering
  \includegraphics[width=.8\linewidth]{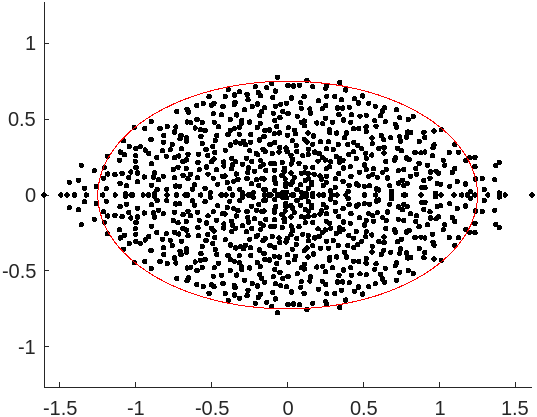}
  \caption{$p_n=\frac{10}{n}$}
  \label{fig:supersparse10}
\end{subfigure}
\caption{Plots of eigenvalues of $1000\times 1000$ (so $n = 1000$) real Gaussian ``supersparse" elliptic matrices $X_n$ with no normalization. Throughout, $\rho_1=\rho_{2,n}=0.5$ and $\theta=0$. The red ellipses plotted are each $E_{0.25}$}
\label{fig:supersparse}
\end{figure}

    \subsection{Notation and preliminary definitions}\label{sec:defs}
	
	Before we give some preliminaries for the context of this paper and the outline of proof, we conclude this section by introducing some notational conventions and defining relevant concepts.

    To discuss the spaces of square matrices, we will be using the notation $\cal{M}_n(\C)$. For example, $\cal{M}_n(\C)$ will denote the space of $n\times n$ matrices over $\C$ and $\cal{M}_n(\cal{M}_2(\C))$ will denote the space of $n\times n$ matrices whose entries are themselves $2\times 2$ matrices over $\C$.
	
	Let $A_n\in \cal{M}_n(\C)$. As above, we denote the $n$ unordered eigenvalues of $A_n$ counted with algebraic multiplicity by $\lambda_1(A_n),\ldots,\lambda_n(A_n)$. Further, the $n$ ordered singular values of $A_n$ will be given by $0\leq \sigma_n(A_n)\leq \cdots\leq \sigma_1(A_n)$. The empirical spectral measure of $A_n$ is given by
	\[
		\mu_{A_n} = \frac{1}{n}\sum_{i=1}^n \delta_{\lambda_i(A_n)}
	\]
	and the analogous empirical singular values measure of $A_n$ is given by
	\[
		\nu_{A_n} = \frac{1}{n}\sum_{i=1}^n \delta_{\sigma_i(A_n)}.
	\]
	Here, $\delta_z$ is the Dirac point mass centered at $z\in\C$. We denote the Hilbert--Schmidt norm of $A_n\in\cal{M}_n(\C)$ by $\norm{A_n}_{HS}$ and define it by
	\[
		\norm{A_n}_{HS}^2 = \tr(A_n^*A_n) = \sum_{1\leq i,j\leq n}\abs{(A_n)_{ij}}^2 = \sum_{i=1}^n\sigma_i(A_n)^2.
	\]
    The operator norm of $A_n$ will be denoted by
    \[
        \norm{A_n} = \max_{\norm{x}=1}\norm{A_nx}= \sigma_1(A_n) = \sqrt{\lambda_{\text{max}}(A_n^*A_n)},
    \]
    where $\lambda_{\max}(A_n^*A_n)$ is the largest eigenvalue of $A_n^*A_n$. Since this matrix $A_n^*A_n$ is positive semi-definite, all of its eigenvalues are real and nonnegative.
	
    Any reference to almost all $z\in \C$ is meant with respect to Lebesgue measure on $\C$.
    
    As for asymptotic notation, if $X$ and $Y$ are quantities depending on $n$, we use $X = O(Y)$, $X\ll Y$, or $Y\gg X$ to mean that $\abs{X}\leq CY$ for all sufficiently large $n$ and a constant $C$ that does not depend on $n$. Additionally, by $X = o(Y)$, we mean that $X/Y\to 0 $ as $n\to\infty$. If an asymptotic quantity depends on some ambient constant $\alpha$, this will be denoted as $O_\alpha(\cdot)$, $o_\alpha(\cdot)$, or $\ll_\alpha$.
	
	We will use $I_n$ to denote the $n\times n$ identity matrix. 

    For an event $E$, we will denote the indicator of $E$ either by $\mathbbm{1}_E$ or $\mathbbm{1}\{E\}$.

    For two complex random variables $X$ and $Y$, each with finite second moment, we denote the covariance of $X$ and $Y$ by
    \[
        \cov(X,Y) := \E[(X-\E X)\overline{(Y-\E Y)}]=\E[X\overline{Y}]-(\E X)(\E \overline{Y})
    \]
    and when $\var(X),\var(Y)>0$, the correlation of $X$ and $Y$ by
    \[
        \corr(X,Y) := \frac{\cov(X,Y)}{\sqrt{\var (X)}\sqrt{\var (Y)}}.
    \]

    A measurable function $f$ is said to be uniformly integrable in probability for a sequence $(\mu_n)_{n\geq 1}$ of random probability measures if for all $\eps>0$, there exists a $T>0$ such that
    \[
        \limsup_{n\to\infty}\P\parens{\int_{\abs{f}>T}\abs{f(s)}\,d\mu_n(s)>\eps}<\eps.
    \]

    We recall that for two probability measures $\mu,\nu$ on $\R$ with its Borel $\sigma$-algebra, the Lévy distance between $\mu$ and $\nu$ is defined as
    \[
        L(\mu,\nu):= \inf\set{\eps>0:\mu(A)\leq \nu(A^{\eps})+\eps\text{ and }\nu(A)\leq \mu(A^{\eps})+\eps\text{ for all Borel sets }A\subseteq\R},
    \]
    where $A^{\eps}$ is the $\eps$-neighborhood of $A$:
    \[
        A^{\eps} := \bigcup_{x\in A}\set{y\in\R:\abs{y-x}<\eps}.
    \]

    A sequence of random measures $(\mu_n)_{n\geq1}$ on $\R$ (or $\C$) with its Borel $\sigma$-algebra is said to converge weakly in probability to the deterministic probability measure $\mu$ as $n\to\infty$ if for every bounded and continuous real-valued function $f$ on $\R$ (resp. $\C$) and every $\eps>0$:
    \[
        \lim_{n\to\infty}\P\parens{\abs{\int f\,d\mu_n-\int f\,d\mu}>\eps}=0.
    \]

\section{Preliminaries and outline of proof}

The rest of the paper is devoted to the proof of Theorem \ref{thm:maintheorem}.

With $M_n = (np_n)^{-1/2}(X_n+F_n)$, we want to show that the empirical spectral measure $\mu_{M_n}$ converges weakly in probability to the measure $\mu_{\rho_1,\theta,\tau,p}$ as $n\to\infty$. The main tool we will be utilizing to prove this weak convergence of measures is the logarithmic potential.

We define $\cal{P}(\C)$ to be the set of probability measures that integrate $\log(\cdot)$ in a neighborhood of infinity. For such a probability measure $\mu\in\cal{P}(\C)$, we define the \textit{logarithmic potential of $\mu$}, $U_\mu:\C\to (-\infty,\infty]$, by
\[
	U_\mu(z) = -\int_{\C}\log\abs{w-z}\,d\mu(w).
\] 
Our main aim of the paper is to apply a classical result that connects the behavior of the logarithmic potentials  to the behavior of the underlying ESMs and empirical singular value measures. Due to a Hermitization argument that dates back to at least Girko \cite{girko1}, we may write
\[
	U_{\mu_{M_n-zI_n}}(z) = -\frac{1}{n}\sum_{i=1}^n\log(\sigma_i(M_n-zI_n)).
\]
This powerful equation allows us to instead focus our attention on the singular values of $M_n-zI_n$, which are themselves square roots of eigenvalues of the Hermitian matrix $(M_n-zI_n)^*(M_n-zI_n)$. Although we are able to work with a Hermitian matrix, the addition of $zI_n$ introduces an additional parameter $z\in\C$ to keep track of.

The outline of proof for this paper is motivated by the following classical result as seen in \cite[Lemma 4.3]{aroundthecirc} used to show weak in probability convergence of $\mu_{M_n}$:

\begin{lem}[Hermitization]\label{lem:hermitization}
	Let $(A_n)_{n\geq1}$ be a sequence of complex random matrices where $A_n$ is $n\times n$. Suppose that there exists a family of (non-random) probability measures $(\nu_z)_{z\in\C}$ on $[0,\infty)$ such that, for almost all $z\in\C$,
	\begin{itemize}
		\item[(i)]\label{proveweakinprob} $\nu_{A_n-zI_n}$ converges weakly in probability to $\nu_z$ as $n\to\infty$ and
		\item[(ii)]\label{proveuniformintegrability} $\log(\cdot)$ is uniformly integrable for $(\nu_{A_n-zI_n})_{n\geq 1}$.
	\end{itemize}
	Then there exists a probability measure $\mu$ that integrates $\log(\cdot)$ in a neighborhood of infinity such that
	\begin{enumerate}
		\item[(j)]\label{resultweakinprob} $\mu_{A_n}$ converges weakly in probability to $\mu$ as $n\to\infty$ and
		\item[(jj)]\label{resultdeterminestheellipticlaw} for almost all $z\in\C$, $$U_\mu(z) = -\int_0^\infty\log (s)\,d\nu_z(s).$$
	\end{enumerate}
\end{lem}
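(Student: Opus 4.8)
The plan is to run the logarithmic-potential method exactly as in the proof of \cite[Lemma 4.3]{aroundthecirc}. The device linking the singular-value hypotheses to the conclusion about eigenvalues is the Hermitization identity $U_{\mu_{A_n}}(z)=-\frac1n\log\bigl|\det(A_n-zI_n)\bigr|$, valid whenever $z$ is not an eigenvalue of $A_n$ (in particular for Lebesgue-a.e.\ $z$, almost surely); since $\bigl|\det(A_n-zI_n)\bigr|=\prod_i\sigma_i(A_n-zI_n)$ this reads
\[
    U_{\mu_{A_n}}(z) = -\frac1n\sum_{i=1}^n\log\sigma_i(A_n-zI_n) = -\int_0^\infty \log(s)\,d\nu_{A_n-zI_n}(s).
\]
Thus the whole argument reduces to (a) computing the limit of $\int_0^\infty\log(s)\,d\nu_{A_n-zI_n}(s)$, and (b) passing from convergence of the logarithmic potentials $U_{\mu_{A_n}}$ back to convergence of the measures $\mu_{A_n}$.

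\emph{Step 1 (the potentials converge pointwise).} Fix $z$ in the full-measure set on which (i) and (ii) hold and set $U(z) := -\int_0^\infty\log(s)\,d\nu_z(s)$. I would first invoke the elementary deterministic fact that if $\rho_n\to\rho$ weakly on $[0,\infty)$ and $\log$ is uniformly integrable for $(\rho_n)_n$, then $\int\log\,d\rho_n\to\int\log\,d\rho$ and the limit is finite: uniform integrability kills both the tail at $s=+\infty$ and the singularity at $s=0^+$, while on a compact subinterval of $(0,\infty)$ whose endpoints are not atoms of $\rho$ the integrand is bounded and continuous. Converting the ``in probability'' versions of (i) and (ii) to almost-sure statements along subsequences and applying this fact gives $\int_0^\infty\log(s)\,d\nu_{A_n-zI_n}(s)\to U(z)$ in probability, i.e.\ $U_{\mu_{A_n}}(z)\to U(z)$ in probability, for a.e.\ $z$, with $U(z)$ finite.

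\emph{Step 2 (tightness and identification of the limit).} Pick one good point $z_0$. Combining Weyl's product inequality $\sum_{i\le k}\log|\lambda_i(A_n)|\le\sum_{i\le k}\log\sigma_i(A_n)$ (eigen- and singular values in decreasing order) with $\sigma_i(A_n)\le\sigma_i(A_n-z_0I_n)+|z_0|$ yields
\[
    \int_{\C}\log^+|w|\,d\mu_{A_n}(w) \;\le\; \int_0^\infty\log^+(s+|z_0|)\,d\nu_{A_n-z_0I_n}(s),
\]
and the right-hand side is bounded in probability by the uniform integrability at $z_0$. This one estimate supplies everything: by Markov's inequality it gives tightness (in probability) of $(\mu_{A_n})_n$, and it gives a uniform-in-$n$ lower bound $U_{\mu_{A_n}}(z)\ge -c_n-\log^+|z|-\log 2$ with $(c_n)_n$ bounded in probability. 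Now take any subsequence; using the sequential characterization of convergence in probability in the metrizable space of probability measures on $\C$, pass to a further subsequence and discard a null event so that off it, simultaneously, the pointwise convergence of Step 1 holds at Lebesgue-a.e.\ $z$ (Fubini), the measures $\mu_{A_n}$ form a tight sequence, and $\sup_n c_n<\infty$. Any weak limit point $\widetilde\mu$ of this deterministic sequence has $U_{\widetilde\mu}=U$ Lebesgue-a.e.: weak convergence together with uniform $\log^+$-integrability near infinity (itself a consequence of the displayed bound passing to the limit) forces $U_{\mu_{A_n}}\to U_{\widetilde\mu}$ in $\mathcal{D}'(\C)$, and a Fatou argument based on the uniform lower bound above pins the distributional limit to $U$ a.e.\ as well. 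Since $\Delta U_\mu=-2\pi\mu$ in $\mathcal{D}'(\C)$, the map $\mu\mapsto U_\mu$ is injective modulo Lebesgue-null sets, so $\widetilde\mu$ must equal the unique probability measure $\mu$ with $U_\mu=U$ a.e. As $\mu$ is deterministic and independent of the subsequence, $\mu_{A_n}\to\mu$ weakly in probability, which is (j); conclusion (jj) is precisely $U_\mu=U$ a.e.; and $\mu$ integrates $\log$ near infinity since otherwise $\int\log^+|w|\,d\mu(w)=\infty$ would force $U_\mu(z)=-\infty$ for a.e.\ $z$, contradicting the finiteness of $U$.

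\emph{Main obstacle.} The one genuinely non-formal ingredient is the last step: turning the pointwise-a.e., merely-in-probability convergence of the random potentials $U_{\mu_{A_n}}(\cdot)$ into weak convergence of the measures $\mu_{A_n}$. The local logarithmic singularity rules out a naive dominated-convergence argument, so one is forced to route through tightness, a subsequence extraction, the continuity of $\mu\mapsto U_\mu$ as a map into $\mathcal{D}'(\C)$ under weak convergence of uniformly $\log$-integrable families, a Fatou estimate, and the distributional-Laplacian uniqueness of a measure with a prescribed potential. Setting up the tightness and the uniform lower bound that make all of this run --- the Weyl-inequality estimate above --- is the other place where the hypotheses are used in an essential, rather than purely bookkeeping, way.
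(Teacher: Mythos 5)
Your argument is correct in outline, but note that the paper does not prove this lemma at all: it is quoted verbatim as \cite[Lemma 4.3]{aroundthecirc}, and your proposal essentially reconstructs the standard Bordenave--Chafa\"{i} proof of that cited result (Hermitization identity, convergence of the potentials via uniform integrability, tightness through Weyl's inequality, and identification of the limit through the unicity of the logarithmic potential via the distributional Laplacian, which the paper separately cites as \cite[Lemma 4.1]{aroundthecirc}). So there is nothing to correct, only the observation that within this paper the appropriate move is simply to cite the lemma rather than reprove it.
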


At the level of eigenvalues, we will be interested in a rotated version of our sparse elliptic random matrix. For much of the proof, $M_n$ will be playing the role of $A_n$ above. However, for the proof of Theorem \ref{thm:maintheorem}, we will first assume we have proven the result for the rotated matrix $e^{-i\theta/2}M_n$. This rotation is to force the mirrored, off-diagonal entries of $X_n$ to satisfy:
\begin{equation}\label{eq:realmixedmoment}
    \E[(e^{-i\theta/2}\delta_{ij}^{(n)}x_{ij})(e^{-i\theta/2}\delta_{ji}^{(n)}x_{ji})]= e^{-i\theta}\E[\delta_{ij}^{(n)}\delta_{ji}^{(n)}]\E[x_{ij}x_{ji}] = \rho_1p_n(\tau_n+p_n)\in\R.
\end{equation}
Compare this with the computation done in \eqref{eq:mixedmoment}, modulo dividing by $p_n$.

At first, we aim to show that in the notation of Lemma \ref{lem:hermitization}, $\mu = \mu_{\rho_1,0,\tau,p}$
To do this, we will be using \textit{(jj)} above after showing that the family $(\nu_z)_{z\in\C}$ \textit{determines the elliptic law with parameters $\rho_1$, $\tau$, $\theta=0$, and $p$}: for almost all $z\in\C$,
\[
    U_{\mu_{\rho_1,0,\tau,p}}(z) = -\int_0^\infty\log(s)\,d\nu_z(s).
\]
The equality of measures $\mu=\mu_{\rho_1,0,\tau,p}\in\cal{P}(\C)$ then follows from the unicity of the logarithmic potential, see for example \cite[Lemma 4.1]{aroundthecirc}.

In this vein, the subsequent sections of this paper will be devoted to proving an auxiliary theorem, which is Theorem \ref{thm:maintheorem} with $\theta = 0$, so that $\E[\xi_1\xi_2]\in\R$ is real.

\begin{thm}\label{thm:auxiliarymaintheorem}
    Assuming all of the hypotheses and notation coming from Theorem \ref{thm:maintheorem} in the case that $\theta=0$, we have that
    \[
        \mu_{M_n} \to \mu_{\rho_1,0,\tau,p}
    \]
    weakly in probability as $n\to\infty$.
\end{thm}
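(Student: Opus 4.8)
The plan is to verify the two hypotheses of the Hermitization lemma (Lemma \ref{lem:hermitization}) for $A_n = M_n = (np_n)^{-1/2}(X_n+F_n)$ and to read off the limiting measure from its conclusion. Set $\rho := \rho_1(\tau+p)$. Since $\tau_n+p_n = \rho_{2,n}(1-p_n)+p_n \in [2p_n-1,1]$ whenever $p_n<1$ (and $\tau=0$, $p=1$ in the degenerate case, cf.\ Remark \ref{rem:pnequalsone}), we have $\tau+p\in[-1,1]$, hence $\abs{\rho}\le\rho_1<1$ and the ellipsoid $E_\rho$ of \eqref{eq:ellipse} is well defined. It suffices to show that for almost every $z\in\C$: (i) $\nu_{M_n-zI_n}$ converges weakly in probability, as $n\to\infty$, to the probability measure $\nu_z$ on $[0,\infty)$ that is the limiting empirical singular value law of $n^{-1/2}G_n - zI_n$, where $G_n$ is a \emph{dense} elliptic random matrix with atom correlation $\rho$; and (ii) $\log(\cdot)$ is uniformly integrable in probability for $(\nu_{M_n-zI_n})_{n\geq1}$. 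Granting these, Lemma \ref{lem:hermitization} produces a probability measure $\mu$ with $U_\mu(z) = -\int_0^\infty\log(s)\,d\nu_z(s)$ for a.e.\ $z$; but $\nu_z$ is the classical limiting symmetrized singular value law of the dense elliptic model (as in the proofs of \cite{naumov,ellipticlaw}), and those same arguments give $-\int_0^\infty\log(s)\,d\nu_z(s) = U_{\mu_\rho}(z)$ with $\mu_\rho$ the uniform measure on $E_\rho$. Hence $U_\mu = U_{\mu_\rho}$ a.e., and the uniqueness of logarithmic potentials (\cite[Lemma 4.1]{aroundthecirc}) forces $\mu = \mu_\rho = \mu_{\rho_1,0,\tau,p}$, which is the asserted limit.

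For (i), I would first pass to a clean model. Since $\rank(F_n)=o(n)$, the interlacing (rank) inequality for singular values shows that $\nu_{M_n-zI_n}$ and $\nu_{(np_n)^{-1/2}X_n-zI_n}$ differ by at most $\rank(F_n)/n=o(1)$ in Kolmogorov distance, so $F_n$ may be dropped; the diagonal entries $(np_n)^{-1/2}\delta_{ii}^{(n)}x_{ii}$ contribute $o(1)$ to $\tfrac{1}{n}\norm{\cdot}_{HS}^2$ (after a truncation if necessary) and so may also be dropped; and the off-diagonal atoms $x_{ij}$ may be truncated to be bounded, changing $\nu_{M_n-zI_n}$ negligibly via the Hoffman--Wielandt inequality. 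The resulting matrix has entries of variance $(1+o(1))/n$, mirrored pairs of covariance $(\rho+o(1))/n$, and --- crucially --- per-entry fourth moment $O\big((n^2p_n)^{-1}\big)=o(1/n)$; this last point is exactly where the hypothesis $np_n\to\infty$ enters the bulk. The weak convergence $\nu_{M_n-zI_n}\to\nu_z$ can then be obtained either by setting up the $2\times 2$ matrix Dyson equation for the Stieltjes transform of the symmetrized spectrum of the Hermitization $H_n(z)$ (the $2n\times 2n$ Hermitian matrix with off-diagonal blocks $M_n-zI_n$ and $(M_n-zI_n)^*$), the $o(1/n)$ fourth moment killing the fluctuation terms, or by a Lindeberg swapping argument replacing mirrored pairs one at a time by Gaussian pairs with matching first and second (including pseudo-) moments and estimating the error by third and fourth moments against resolvent-derivative bounds. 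Either route shows $\nu_z$ depends only on the limits $1$ and $\rho$ of $n\E\abs{(M_n)_{ij}}^2$ and $n\E[(M_n)_{ij}(M_n)_{ji}]$ --- not on the pseudo-variances $\E[(M_n)_{ij}^2]$, because $H_n(z)$ is Hermitian --- so it coincides with the law described in (i).

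For (ii), split $\log$ into its two tails. The large-$s$ part is controlled by the bound on $\tfrac{1}{n}\norm{M_n-zI_n}_{HS}^2 = \tfrac{1}{n}\tr\big((M_n-zI_n)^*(M_n-zI_n)\big)$, which is $O(1)$ in probability by \eqref{eq:boundedhilbertschmidt} and the law of large numbers for $\tfrac{1}{n^2p_n}\sum_{i,j}\delta_{ij}^{(n)}\abs{x_{ij}}^2\to1$; this makes $\int_{s>T}\log(s)\,d\nu_{M_n-zI_n}(s)$ uniformly small for $T$ large. The small-$s$ part is the crux, and reduces to two events of probability $1-o(1)$, for a.e.\ $z$ and uniformly for $z$ in compact sets: (a) a polynomial lower bound $\sigma_n(M_n-zI_n)\ge n^{-C}$ on the least singular value, and (b) a polynomial bound $\sigma_{n-k}(M_n-zI_n)\ge c\,k/n$ on the intermediate singular values. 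Estimate (b) together with the convergence from (i) (using that, for a.e.\ $z$, $\nu_z$ has a bounded density near $0$, so $\nu_z([0,t])=O(t)$) yields $\nu_{M_n-zI_n}([0,t])=O(t)$ uniformly in $n$ down to scale $n^{-C}$, whence $\int_0^\varepsilon\abs{\log s}\,d\nu_{M_n-zI_n}(s)\to0$ as $\varepsilon\to0$. I expect (a) to be the main obstacle: writing $M_n-zI_n=(np_n)^{-1/2}(X_n+D_n)$ with $D_n:=F_n-\sqrt{np_n}\,zI_n$ deterministic (of possibly large operator norm), it demands quantitative invertibility $\sigma_n(X_n+D_n)\ge\sqrt{np_n}\,n^{-C}$ of a \emph{sparse} random matrix with an arbitrary deterministic shift --- the kind of estimate proved for sparse i.i.d.\ matrices in \cite{rudelsontik,revisited} via $\varepsilon$-nets, compressible/incompressible dichotomies, and Littlewood--Offord/small-ball inequalities adapted to Bernoulli-thinned coordinates. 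The new difficulty is the elliptic correlation; I would handle it by exploiting that all dependence in $X_n$ is confined to mirrored pairs, so conditioning on the entries on and below the diagonal makes the strictly upper-triangular entries independent, which decouples the small-ball estimates for $\norm{(X_n+D_n)v}$ into the independent-entry setting, after which the sparse-matrix machinery applies; a net over $z$ plus the Lipschitz dependence of $\sigma_n(M_n-zI_n)$ on $z$ then upgrades fixed $z$ to a.e.\ $z$.

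Finally, with (i) and (ii) established, Lemma \ref{lem:hermitization} yields $\mu$ with $U_\mu(z)=-\int_0^\infty\log(s)\,d\nu_z(s)=U_{\mu_\rho}(z)$ for a.e.\ $z$, so uniqueness of logarithmic potentials forces $\mu=\mu_\rho=\mu_{\rho_1,0,\tau,p}$, the uniform measure on $E_{\rho_1(\tau+p)}$; hence $\mu_{M_n}\to\mu_{\rho_1,0,\tau,p}$ weakly in probability, which is Theorem \ref{thm:auxiliarymaintheorem}. Of all the steps, the polynomial lower bound (a) on the least singular value in the sparse elliptic regime is by far the most delicate; the reductions, the identification of $\nu_z$, and the uniform integrability away from $0$ are variations on now-standard arguments.
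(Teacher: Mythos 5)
Your overall architecture is the same as the paper's: Hermitization (Lemma \ref{lem:hermitization}), identification of $\nu_z$ with the limiting singular value law of a \emph{dense} elliptic model at correlation $\rho_1(\tau+p)$ via truncation plus Lindeberg swapping, uniform integrability of $\log$ from a least-singular-value bound together with a Tao--Vu type bound on intermediate singular values, and then unicity of logarithmic potentials. Steps (i), (b), and the tail estimates are fine at this level of detail and match the paper's Sections \ref{sec:convofESVMs} and \ref{sec:uniformintegrability}.

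The genuine gap is in your step (a), the least singular value. Your plan is to recover the sparse i.i.d.\ machinery of \cite{rudelsontik,revisited} by conditioning on the entries on and below the diagonal, so that the strictly upper-triangular entries become (conditionally) independent. This does not work at the stated generality. First, the hypotheses only require $\abs{\E[\xi_1\xi_2]}=\rho_1<1$; this does \emph{not} prevent $\xi_1$ from being a deterministic (nonlinear) function of $\xi_2$ --- e.g.\ $\xi_2$ standard Gaussian and $\xi_1=(\xi_2^2-1)/\sqrt{2}$ has correlation $0$ --- and in the symmetric-Bernoulli corollary ($\rho_{2,n}=1$) the sparsity pattern above the diagonal is likewise determined by the pattern below. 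In such cases the conditional law of an upper-triangular entry given its mirror is a point mass, so there is no conditional small-ball/Littlewood--Offord input left after your conditioning. Second, even when the conditional laws are nondegenerate, triangular conditioning freezes almost all randomness in the bottom rows (row $i$ retains random entries only in columns $j>i$; the last row retains none), so row-wise anti-concentration against an arbitrary unit vector cannot be run for a constant fraction of rows, and the compressible/incompressible scheme does not simply transfer. The paper avoids rebuilding this machinery altogether: it applies the off-the-shelf invertibility result \cite[Theorem 5.1]{andrewelliptic} (stated for matrices whose only dependence is within mirrored pairs, with an arbitrary deterministic shift) to $Z_n=p_n^{-1/2}X_n$ with $A=\sqrt{n}\,((np_n)^{-1/2}F_n-zI_n)$, checking only that, after conditioning on the boundedness events $\mathcal{E}_{ij}^a$, the entries have variance bounded below and mirrored correlation bounded away from $\pm1$ (it tends to $\rho_1\abs{\tau_n+p_n}<1$), and couples this with the crude operator-norm bound $\P(\sigma_1(M_n-zI_n)\geq n^{1.5})=o(1)$ from the Hilbert--Schmidt norm. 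Note that correlation strictly less than $1$ is exactly the hypothesis that survives deterministic dependence, which is why the black-box route closes the argument while your conditional-independence reduction does not. If you want a self-contained proof instead, the standard fix is a balanced bilinear/decoupling splitting of the index set (as in the symmetric and elliptic least-singular-value literature underlying \cite{andrewelliptic}), not conditioning on a triangle.
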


We will provide a proof of Theorem \ref{thm:maintheorem}---assuming Theorem \ref{thm:auxiliarymaintheorem} holds---at the end of this section. Therefore, for the results of Sections \ref{sec:convofESVMs}, \ref{sec:leastsingularvalue}, \ref{sec:uniformintegrability}, and the rest of the paper in general, we will only be concerned with establishing Theorem \ref{thm:auxiliarymaintheorem}. 

To prove Theorem \ref{thm:auxiliarymaintheorem}, the paper will go through the following steps. Section \ref{sec:convofESVMs} will prove \textit{(i)} in Lemma \ref{lem:hermitization} for a family of deterministic measures $(\nu_z)_{z\in\C}$ on $[0,\infty)$ that is known to determine the elliptic law with parameters $\rho_1$, $\tau$, $\theta=0$, and $p$. Before proving uniform integrability of the logarithm for $(\nu_{M_n-zI_n})_{n\geq 1}$ in probability (this is \textit{(ii)} in Lemma \ref{lem:hermitization}) in Section \ref{sec:uniformintegrability}, we take Section \ref{sec:leastsingularvalue} to establish a lower bound on the least singular value of $M_n - zI_n$.

We now give a proof of Theorem \ref{thm:maintheorem} assuming Theorem \ref{thm:auxiliarymaintheorem}:
\begin{proof}[Proof of Theorem \ref{thm:maintheorem}]
		First, we consider the rotated matrix
        $$\widehat{M}_n = e^{-i\theta/2}M_n.$$
        This matrix can be viewed as a sparse elliptic random matrix coming from Definition \ref{def:sparseellipticmodel} with atom variable $(e^{-i\theta/2}\xi_1,e^{-i\theta/2}\xi_2)$. As in \eqref{eq:realmixedmoment}, the mixed moment of our atom variable is real-valued in this case. Therefore, $\theta = 0$ and we have by Theorem \ref{thm:auxiliarymaintheorem} that $\mu_{\widehat{M}_n}\to \mu_{\rho_1,0,\tau,p}$ weakly in probability as $n\to\infty$.

        We now rotate back to get a result about our original, non-rotated matrix.
        Since the eigenvalues of $\widehat{M}_n$ are the eigenvalues of $M_n$ rotated by an angle of $-\theta/2$ radians in the complex plane, we have that $\mu_{M_n}$ is nothing more than the pushforward of $\mu_{\widehat{M}_n}$ by $f:\C\to\C$, where $f(z) = e^{i\theta/2}z$ is rotation by $\theta/2$ radians in the complex plane:
        \[
            \mu_{M_n} = f_*(\mu_{\widehat{M}_n}).
        \]
        Since $f$ is continuous and taking the pushforward by a continuous function preserves weak convergence of measures (see \cite[Theorem 2.7]{Billingsley}, say), we have that for almost all $z\in\C$,
        \begin{equation}\label{eq:finalstrike} 
            \mu_{M_n} = f_*(\mu_{\widehat{M}_n}) \to f_*(\mu_{\rho_1,0,\tau,p}).
        \end{equation}
        weakly in probability as $n\to\infty$ as well. Finally, we know that $\mu_{\rho_1,0,\tau,p}$ has a density given by $$\frac{1}{\pi(1-(\rho_1(\tau+p))^2)}\mathbbm{1}_{E_{\rho_1(\tau+p)}}(z).$$ Therefore, $f_*(\mu_{\rho_1,0,\tau,p})$ will have a density as well; in fact, given that our rotation map $f$ is bijective, this new density is given by
        \begin{align*}
            C_{\rho_1,\tau,p}^{-1}\mathbbm{1}_{E_{\rho_1(\tau+p)}}(f^{-1}(z))
            &=
            C_{\rho_1,\tau,p}^{-1}\mathbbm{1}_{E_{\rho_1(\tau+p)}}(e^{-i\theta/2}z)\\&=
            C_{\rho_1,\tau,p}^{-1}\mathbbm{1}_{e^{i\theta/2}E_{\rho_1(\tau+p)}}(z),
        \end{align*}
        where $C_{\rho_1,\tau,p}=\pi(1-(\rho_1(\tau+p))^2)$
        It follows that $f_*(\mu_{\rho_1,0,\tau,p})=\mu_{\rho_1,\theta,\tau,p}$, the uniform measure on the ellipsoid $e^{i\theta/2}E_{\rho_1(\tau+p)}$ in the complex plane---Theorem \ref{thm:maintheorem} now follows from \eqref{eq:finalstrike}.
	\end{proof}

    We recall that all subsequent chapters will be dedicated to proving Theorem \ref{thm:auxiliarymaintheorem}.

\section{Convergence of the empirical singular value measures}\label{sec:convofESVMs}
	
	This section is dedicated to showing the weak, in probability convergence of the empirical singular value measures of the following matrix:
    $$M_n-zI_n = (np_n)^{-1/2}(X_n+F_n) - zI_n.$$ 
    More specifically, for almost every $z\in\C$, we show the in-probability weak convergence of $\nu_{M_n-zI_n}$ to some deterministic measure $\nu_z$. We also want this family $(\nu_z)_{z\in\C}$ to satisfy
	\begin{equation}\label{eq:determinestheellipticlaw}
	    U_{\mu_{\rho_1,0,\tau,p}}(z) = -\int_0^\infty\log(s)\,d\nu_z(s)
	\end{equation}
	for almost all $z\in\C$. The main result of this section is the following.
	\begin{prop}\label{prop:weakconvergence}
		Let $(X_n)_{n\geq 1}$ be a sparse elliptic random matrix model coming from Definition \ref{def:sparseellipticmodel} and satisfying the hypotheses of Theorem \ref{thm:auxiliarymaintheorem}. Furthermore, let $(F_n)_{n\geq 1}$ be a sequence of deterministic matrices over $\C$ satisfying $\rank(F_n) = o(n)$ and \eqref{eq:boundedhilbertschmidt}. Then for almost all $z\in\C$, $$\nu_{M_n-zI_n}\to\nu_z$$  weakly in probability as $n\to\infty$. Furthermore, $(\nu_z)_{z\in\C}$ satisfies \eqref{eq:determinestheellipticlaw} for almost all $z\in\C$.
	\end{prop}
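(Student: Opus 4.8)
\section*{Proof proposal for Proposition \ref{prop:weakconvergence}}

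The plan is to study the $2n\times 2n$ Hermitian matrix
\[
    H_n(z) = \begin{pmatrix} 0 & M_n - zI_n \\ (M_n - zI_n)^* & 0 \end{pmatrix},
\]
whose eigenvalues are $\pm\sigma_i(M_n-zI_n)$, so that the symmetrization of $\nu_{M_n-zI_n}$ is the ESD of $H_n(z)$; convergence of the two is equivalent. First I would dispose of the deterministic perturbation: since $\rank(F_n) = o(n)$, the matrix $(np_n)^{-1/2}F_n$ has rank $o(n)$, and Cauchy interlacing applied to $H_n(z)$ versus its analogue built from $(np_n)^{-1/2}X_n - zI_n$ shows that the corresponding empirical singular value distributions differ by at most $O(\rank(F_n)/n)=o(1)$ in Kolmogorov distance; hence it suffices to treat $Y_n := (np_n)^{-1/2}X_n - zI_n$. (Only the rank hypothesis is used here; the Hilbert--Schmidt bound \eqref{eq:boundedhilbertschmidt} on $F_n$ is retained for the uniform-integrability argument of Section \ref{sec:uniformintegrability}.) Next, a standard truncation replaces each $x_{ij}$ by a bounded, recentered version $x_{ij}\mathbf{1}\{|x_{ij}|\le K\} - (\text{mean})$: because $\frac{1}{n^2p_n}\norm{A-B}_{HS}^2$ dominates a fixed power of the L\'evy distance between $\nu_A$ and $\nu_B$, and $\E\frac{1}{n^2p_n}\sum_{ij}\delta_{ij}^{(n)}|x_{ij}|^2\mathbf{1}\{|x_{ij}|>K\}\to\E[|\xi_1|^2\mathbf{1}\{|\xi_1|>K\}]$ (similarly for the i.i.d.\ diagonal variables), the truncation error is negligible for large $K$, and the mirrored second moments of the truncated entries converge to $\rho_1$ as $K\to\infty$.

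After these reductions we have a sparse elliptic matrix with bounded entries, and the core task is convergence in probability of the ESD of $H_n(z)$ to a deterministic $\nu_z$, for a.e.\ $z$. I would run the resolvent / self-consistent-equation method: writing $G_n(w) = (H_n(z)-wI_{2n})^{-1}$ for $w$ in the upper half-plane and exploiting the $2\times2$ block structure, Schur-complement identities yield a $2\times 2$ matrix Dyson equation whose only inputs are the rescaled second-moment parameters
\[
    \tfrac{1}{np_n}\E|(X_n)_{ij}|^2 = \tfrac{1}{n}, \qquad \tfrac{1}{np_n}\E[(X_n)_{ij}(X_n)_{ji}] = \tfrac{\rho_1(\tau_n+p_n)}{n}\longrightarrow\tfrac{\rho_1(\tau+p)}{n},
\]
so that, at the level of the limiting equation, $(np_n)^{-1/2}X_n$ behaves exactly like a \emph{dense} elliptic matrix whose mirrored entries have unit variance and correlation $\rho = \rho_1(\tau+p)\in(-1,1)$ (recall $\theta=0$ here). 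The sparsity assumption $np_n\to\infty$ is precisely what forces the row-wise fourth-moment and diagonal error terms --- each of size $O(1/(np_n))$ after summation along a row --- to vanish, so the self-consistent equation closes, and a standard martingale-difference (or Efron--Stein) concentration argument gives convergence in probability of $\frac{1}{2n}\tr G_n(w)$ to the solution. The resulting family $(\nu_z)_{z\in\C}$ is exactly the one attached to the elliptic law with parameter $\rho_1(\tau+p)$, matching the dense analysis of \cite{ellipticlaw, naumov}; equivalently, one may invoke universality to replace $X_n$ by a Gaussian elliptic matrix with the matching covariance --- the requisite Lindeberg ratios being $O(1/(np_n))\to 0$ after truncation --- and quote the dense result directly.

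Finally, the identity \eqref{eq:determinestheellipticlaw}, that $(\nu_z)_z$ determines the elliptic law with parameters $\rho_1,\tau,\theta=0,p$, is a purely deterministic statement about this measure family that carries no trace of the sparsity: it is the assertion $U_{\mu_{\rho_1,0,\tau,p}}(z)=U_{\mu_\rho}(z)=-\int_0^\infty\log(s)\,d\nu_z(s)$ for a.e.\ $z$, with $\rho=\rho_1(\tau+p)$, and this computation was performed for the dense elliptic law in \cite{ellipticlaw}; we quote it. I expect the main obstacle to be the second paragraph --- carefully propagating the error terms in the matrix Dyson equation for a model that is simultaneously sparse, correlated across the $(i,j)\leftrightarrow(j,i)$ symmetry, and carries arbitrary i.i.d.\ diagonal entries, and obtaining concentration of the Stieltjes transform strong enough in the spectral parameter to deduce weak convergence of the ESD for almost every $z$ (behaviour near the real axis, i.e.\ the least singular value, being handled separately in Section \ref{sec:leastsingularvalue}).
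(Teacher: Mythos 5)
Your skeleton---remove $F_n$ by a rank inequality, truncate, and show that at the level of second moments the normalized sparse matrix behaves like a dense elliptic matrix with mirrored correlation $\rho_1(\tau+p)$, either by closing a $2\times 2$ matrix Dyson equation or by a Lindeberg swap to a matched Gaussian model---is essentially the paper's (which takes the swap route in Lemma \ref{lem:comparison} and runs the block-resolvent/self-consistent-equation analysis only for the Gaussian matrix, in the appendix). But two steps would fail as written. First, the diagonal: Definition \ref{def:sparseellipticmodel}(ii) imposes \emph{no} moment assumptions on the $x_{ii}$, so your truncation estimate ``$\E\frac{1}{n^2p_n}\sum\delta^{(n)}_{ij}|x_{ij}|^2\mathbbm{1}\{|x_{ij}|>K\}\to\E[|\xi_1|^2\mathbbm{1}\{|\xi_1|>K\}]$, similarly for the i.i.d.\ diagonal variables'' is unavailable---the corresponding expectation for the diagonal entries may be infinite. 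This is exactly why the paper has a separate step (Lemma \ref{lem:zerodiagonal}): the diagonal entries exceeding $n^{1/4}$ are first removed by a rank inequality, which needs only $\P(|x_{11}|>n^{1/4})\to 0$, and only the remaining bounded diagonal is controlled in Hilbert--Schmidt norm. Some such two-step treatment of the diagonal is needed in your scheme as well.

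Second, ``quote the dense result directly'' is not actually available in the form you need, and this is where the real work of the proposition lies. The dense results of \cite{ellipticlaw} are proved under a specialized covariance structure on the real vector $(\Real\psi_1,\Imag\psi_1,\Real\psi_2,\Imag\psi_2)$, whereas the Gaussian comparison matrix matched to a general complex atom $(\xi_1,\xi_2)$ (with only $\E|\xi_i|^2=1$ and $\E[\xi_1\xi_2]=\rho_1$) need not lie in that class. The paper therefore proves Lemma \ref{lem:determinetheellipticlaw} from scratch: it derives the self-consistent equation (matching \cite[Lemma C.2]{largeelliptic}) for arbitrary pseudo-covariances $\mu_{ij}$, observes that the limiting equation is independent of the $\mu_{ij}$ (they enter only the fluctuation estimates), and only then reduces to the $(\tfrac12,\rho)$-family of \cite{ellipticlaw} to obtain \eqref{eq:determinestheellipticlaw}. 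Your direct Dyson-equation route would have to carry out the same analysis rather than cite it; asserting the identity as ``a purely deterministic statement \dots we quote it'' skips the step that is genuinely new here. A more minor point: with a fixed truncation level $K$ the truncated second moments only match those of the comparison model in the double limit $n\to\infty$, $K\to\infty$, so you need either a diagonal argument in $(n,K)$ and continuity of $\nu_z$ in the correlation parameter, or the paper's choice of the $n$-dependent threshold $(np_n)^{1/4}$ with recentering and rescaling (Lemma \ref{lem:truncation}), which makes all second moments match up to $o(1)$ in a single limit.
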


    As will be seen at the beginning of the proof of Proposition \ref{prop:weakconvergence}, we will not need to worry about the deterministic perturbation $F_n$ given our assumptions on its rank. Thus, for the sake of notation, we denote our normalized, rotated, and shifted sparse random elliptic matrix coming from Definition \ref{def:sparseellipticmodel} by
    \begin{equation}\label{eq:rotatedmatrix}
        Y_{n,z} := (np_n)^{-1/2}X_n-zI_n
    \end{equation}
    for the remainder of this section. Note that many of the auxiliary proofs of this section are concerned with $Y_{n,z}$ and not the full matrix $M_n-zI_n$ appearing in the statement of Proposition \ref{prop:weakconvergence}.

    Since the eigenvalues of $Y_{n,z}^*Y_{n,z}$ are $\sigma_1(Y_{n,z})^2\geq\cdots\geq\sigma_n(Y_{n,z})^2$, we have that
	\[
		\mu_{Y_{n,z}^*Y_{n,z}}((-\infty,x]) = \nu_{Y_{n,z}}((-\infty,\sqrt{x}])\quad\text{for all }x\in\R,
	\]
	which will allow us to focus on the Hermitian matrix $Y_{n,z}^*Y_{n,z}$ rather than $Y_{n,z}$. Before proving Proposition \ref{prop:weakconvergence}, we will prove the following:
	\begin{enumerate}
        \item Prove that the Stieltjes transform of $\nu_{Y_{n,z}}$ concentrates around its mean, the Stieltjes transform of $\E\nu_{Y_{n,z}}$, with high probability. This will allow us to focus on the expected value of the  instead of the random Stieltjes transform itself.
		\item Prove that $\nu_{Y_{n,z}}$ is close to $\nu_{\widetilde{Y}_{n,z}}$ in the Lévy distance with high probability, where $\widetilde{Y}_{n,z}$ is a truncated, centralized, and rescaled version of $Y_{n,z}$.
		\item We will then take advantage of the previous two steps to prove that $\nu_{Y_{n,z}}$ is close to $\nu_{W_{n,z}}$ with high probability. Here $W_{n,z}$ is a random matrix model that we know satisfies $\nu_{W_{n,z}}\to\nu_z$ weakly in probability and $(\nu_z)_{z\in\C}$ determines the elliptic law with parameters $\rho_1$, $\tau$, $\theta=0$, and $p$.
	\end{enumerate}

        \subsection{Step 1: Concentration of the Stieltjes transform}
        
        Step 1 will be an application of Lemma 7.9 in \cite{ellipticlaw}:
	\begin{lem}\label{lem:concentrationofESM}
		Let $({X_n})_{n\geq1}$ be a sparse elliptic random matrix model coming from Definition \ref{def:sparseellipticmodel} and satisfying the hypotheses of Theorem \ref{thm:auxiliarymaintheorem}. We denote the resolvent of $Y_{n,z}^*Y_{n,z}$ by
		\[
			R_{n,z}(\alpha) := (Y_{n,z}^*Y_{n,z}-\alpha I_n)^{-1}
		\]
		for $\alpha\in\C$ with $\Imag(\alpha)\neq0$. Then
		\[
			\E\abs{\frac{1}{n}\tr R_{n,z}(\alpha)-\E\left[\frac{1}{n}\tr R_{n,z}(\alpha)\right]}^4 \leq C\frac{c_\alpha^4}{n^2}
		\]
		uniformly in $z$. Here $C$ is an absolute constant and
		\[
			c_\alpha = \frac{1}{\abs{\Imag(\alpha)}}+\frac{\abs{\alpha}}{\abs{\Imag(\alpha)}^2}.
		\]
		Furthermore, for any fixed $\alpha\in \C$ with $\Imag(\alpha)\neq0$ and any fixed $z\in\C$,
		\[
			\frac{1}{n}\tr R_{n,z}(\alpha) = \E\frac{1}{n}\tr R_{n,z}(\alpha) + O_\alpha(n^{-1/8}) \quad \text{almost surely}.
		\]
	\end{lem}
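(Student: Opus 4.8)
The plan is a routine Doob--martingale concentration argument; indeed this is precisely Lemma 7.9 of \cite{ellipticlaw}, and since $Y_{n,z}=(np_n)^{-1/2}X_n-zI_n$ retains the only structural feature that argument uses --- by Definition \ref{def:sparseellipticmodel} the mirrored pairs $\set{((Y_{n,z})_{ij},(Y_{n,z})_{ji}):1\le i<j\le n}$ together with the diagonal entries form an independent family, the Bernoulli masking and the $(np_n)^{-1/2}$ normalization affecting only the magnitudes of individual entries --- that lemma applies here verbatim. Here is how I would run the argument directly.

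Fix $\alpha$ with $\Imag(\alpha)\ne 0$ and $z\in\C$, and write $s_n:=\tfrac1n\tr R_{n,z}(\alpha)$. For $0\le k\le n$ let $\mathcal{F}_k$ be the $\sigma$-algebra generated by the entries $(Y_{n,z})_{ij}$ with $\min(i,j)\le k$, and set $\E_k(\cdot):=\E[\,\cdot\mid\mathcal{F}_k]$. Then $s_n-\E s_n=\sum_{k=1}^n\gamma_k$ with $\gamma_k:=(\E_k-\E_{k-1})s_n$ a martingale difference sequence. Let $Y_{n,z}^{(k)}$ be $Y_{n,z}$ with its ``$k$-th border'' $\set{(Y_{n,z})_{kj},(Y_{n,z})_{jk}:j\ge k}$ replaced by an independent copy, $R_{n,z}^{(k)}(\alpha)$ its resolvent, and $s_n^{(k)}$ the corresponding normalized trace; since $s_n^{(k)}$ does not involve the $k$-th border, $\E_k s_n^{(k)}=\E_{k-1}s_n^{(k)}$, so $\gamma_k=(\E_k-\E_{k-1})(s_n-s_n^{(k)})$ and $\abs{\gamma_k}\le\E_k\abs{s_n-s_n^{(k)}}+\E_{k-1}\abs{s_n-s_n^{(k)}}$. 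Now $Y_{n,z}$ and $Y_{n,z}^{(k)}$ differ only in row $k$ and column $k$ (the shift $zI_n$ cancelling in the difference), so $\rank(Y_{n,z}-Y_{n,z}^{(k)})\le 2$ and hence $\rank(Y_{n,z}^*Y_{n,z}-(Y_{n,z}^{(k)})^*Y_{n,z}^{(k)})\le 4$; by the resolvent identity $R_{n,z}(\alpha)-R_{n,z}^{(k)}(\alpha)$ then has rank $\le 4$, while (as $Y_{n,z}^*Y_{n,z}$ is Hermitian) $\norm{R_{n,z}(\alpha)-R_{n,z}^{(k)}(\alpha)}\le 2\abs{\Imag(\alpha)}^{-1}$, whence $\abs{s_n-s_n^{(k)}}=\tfrac1n\abs{\tr(R_{n,z}(\alpha)-R_{n,z}^{(k)}(\alpha))}\le \tfrac{8}{n\abs{\Imag(\alpha)}}\le \tfrac{C c_\alpha}{n}$, deterministically and uniformly in $z$. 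Hence $\abs{\gamma_k}\le 2Cc_\alpha/n$ almost surely.

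With deterministically bounded differences, Burkholder's inequality gives an absolute constant with
\[
\E\abs{s_n-\E s_n}^4\le C\,\E\parens{\sum_{k=1}^n\abs{\gamma_k}^2}^2\le C\parens{n\cdot\frac{4C^2c_\alpha^2}{n^2}}^2=\frac{C'c_\alpha^4}{n^2},
\]
which is the first assertion, uniform in $z$ since every bound above was. For the second, with $\alpha$ and $z$ now fixed, Markov's inequality and this bound give $\P(\abs{s_n-\E s_n}>n^{-1/8})\le n^{1/2}\,\E\abs{s_n-\E s_n}^4\le C'c_\alpha^4 n^{-3/2}$, which is summable in $n$; Borel--Cantelli then yields $s_n=\E s_n+O_\alpha(n^{-1/8})$ almost surely.

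There is no deep obstacle here: the step demanding the most care is the deterministic per-border estimate --- pinning down the $\alpha$-dependence of $\abs{s_n-s_n^{(k)}}$ uniformly in $z$, which is where the factor $c_\alpha$ (or, as above, the stronger $\abs{\Imag(\alpha)}^{-1}$) is produced --- and this is exactly the content of \cite[Lemma 7.9]{ellipticlaw}. The remaining ingredients (the martingale decomposition, Burkholder's inequality, and the Borel--Cantelli upgrade to an almost sure rate) are standard.
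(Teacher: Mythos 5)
Your proposal is correct and is essentially the paper's own argument: the paper proves this lemma by invoking \cite[Lemma 7.9]{ellipticlaw}, whose proof is exactly this martingale decomposition along rows/columns, a rank-$4$ resolvent-perturbation bound on the differences, Burkholder's inequality for the fourth moment, and Markov plus Borel--Cantelli for the almost sure rate. Your cosmetic variations (resampling the $k$-th border instead of zeroing it, and bounding the trace difference by rank times operator norm, which even sharpens $c_\alpha$ to $\abs{\Imag(\alpha)}^{-1}$) do not change the substance.
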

	
	\begin{proof}
        Our random matrix $Y_{n,z}$ will be playing the role of the $R_n$ as in the proof of Lemma 7.9 in \cite{ellipticlaw}. Furthermore, our resolvent $R_{n,z}(\alpha)$ is the resolvent $H_n(\alpha)$ appearing in Lemma 7.9 in \cite{ellipticlaw}. Otherwise, even with sparsity, the proof follows exactly as it does in \cite{ellipticlaw}.
	\end{proof}

\subsection{Step 2: The truncation step}

Before truncating, we first remove the diagonal elements of our sparse elliptic random matrix model $X_n$, in a proof inspired by the discussion following the statement of Theorem 2.5 in \cite{BaiSilverstein}.

\begin{lem}\label{lem:zerodiagonal}
    Let $({X_n})_{n\geq1}$ be a sparse elliptic random matrix model coming from Definition \ref{def:sparseellipticmodel} and satisfying the hypotheses of Theorem \ref{thm:auxiliarymaintheorem}. Let $X_n^0$ be the matrix formed from $X_n$ after replacing the diagonal entries by $0$ and furthermore let $Y_{n,z}^0$  be  defined similarly as in \eqref{eq:rotatedmatrix}, with $X_n$ replaced by $X_n^0$. For any $z\in\C$, we have
    \[
        \nu_{Y_{n,z}}-\nu_{Y_{n,z}^0}\to0
    \]
    weakly in probability as $n\to\infty$.
\end{lem}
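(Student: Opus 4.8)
The plan is to show that the Lévy distance $L(\nu_{Y_{n,z}},\nu_{Y_{n,z}^0})$ tends to $0$ in probability --- in fact uniformly in $z$ --- which suffices, since convergence in Lévy distance to $0$ forces $\int f\,d\nu_{Y_{n,z}}-\int f\,d\nu_{Y_{n,z}^0}\to 0$ in probability for every bounded Lipschitz $f$, and such $f$ are convergence determining. Two standard perturbation estimates for singular value distributions will do the work: Weyl's inequality $\max_i\abs{\sigma_i(A+E)-\sigma_i(A)}\le\norm{E}$, which gives $L(\nu_{A+E},\nu_A)\le\norm{E}$, and the rank inequality $\sup_{x\ge0}\abs{\nu_A((-\infty,x])-\nu_B((-\infty,x])}\le\rank(A-B)/n$, which gives $L(\nu_A,\nu_B)\le\rank(A-B)/n$ (see \cite{BaiSilverstein}). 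The key starting observation is that the perturbation is $z$-free: $Y_{n,z}-Y_{n,z}^0=(np_n)^{-1/2}(X_n-X_n^0)=(np_n)^{-1/2}D$, where $D$ is the diagonal matrix with $(i,i)$ entry $d_i:=\delta_{ii}^{(n)}x_{ii}$.

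First I would flag the one genuine difficulty. Because of the flexibility permitted on the diagonal in Definition \ref{def:sparseellipticmodel}, we do not know that $\E\abs{x_{11}}^2<\infty$, so the naive estimate $\E\,\tfrac1n\norm{(np_n)^{-1/2}D}_{HS}^2=\E\abs{x_{11}}^2/n\to0$ is unavailable. To get around this, fix $\eps>0$ and use that the $x_{ii}$ are i.i.d.\ to choose $K=K(\eps)$ with $\P(\abs{x_{11}}>K)<\eps$. Split $D=D'+D''$, where $D'$ keeps the entries $d_i$ with $\abs{d_i}\le K$ and $D''$ keeps the rest, and set $Z_n:=Y_{n,z}^0+(np_n)^{-1/2}D''$, so that $Y_{n,z}=Z_n+(np_n)^{-1/2}D'$. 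For the bounded part, $\norm{(np_n)^{-1/2}D'}\le K(np_n)^{-1/2}\to0$, where the normalization $(np_n)^{-1/2}$ together with $np_n\to\infty$ is essential; Weyl's inequality then yields $L(\nu_{Y_{n,z}},\nu_{Z_n})\le K(np_n)^{-1/2}\to0$ deterministically. For the heavy part, $D''$ is diagonal with at most $\tilde R_n:=\sum_{i=1}^n\mathbbm{1}\{\abs{x_{ii}}>K\}$ nonzero entries, so $\rank((np_n)^{-1/2}D'')\le\tilde R_n$ and the rank inequality yields $L(\nu_{Z_n},\nu_{Y_{n,z}^0})\le\tilde R_n/n$.

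Next I would control $\tilde R_n/n$: since $\tilde R_n$ is $\mathrm{Binomial}(n,q)$ with $q=\P(\abs{x_{11}}>K)<\eps$, Chebyshev's inequality gives $\P(\tilde R_n/n>\eps)\le q(1-q)/(n(\eps-q)^2)\to0$. Combining the two bounds, $L(\nu_{Y_{n,z}},\nu_{Y_{n,z}^0})\le K(np_n)^{-1/2}+\tilde R_n/n$; hence for all $n$ large enough that $K(np_n)^{-1/2}<\eps$, the event $\{L(\nu_{Y_{n,z}},\nu_{Y_{n,z}^0})>2\eps\}$ is contained in $\{\tilde R_n/n>\eps\}$, whence $\limsup_{n\to\infty}\P(L(\nu_{Y_{n,z}},\nu_{Y_{n,z}^0})>2\eps)=0$. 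Since $\eps>0$ was arbitrary, $L(\nu_{Y_{n,z}},\nu_{Y_{n,z}^0})\to0$ in probability, and the stated weak convergence follows.

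I do not expect a deep obstacle here: the argument is routine once organized correctly. The only point requiring care --- and the reason the one-line Hilbert--Schmidt bound does not apply --- is the absence of any moment assumption on the diagonal entries. This is precisely what forces the truncation split $D=D'+D''$ and the combined use of Weyl's inequality for the operator-norm-negligible bounded part (where the $(np_n)^{-1/2}$ normalization is used crucially) and the rank inequality for the few large, low-rank entries.
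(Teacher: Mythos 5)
Your proposal is correct, and it reaches the conclusion by a somewhat different route than the paper. Both arguments share the same skeleton: truncate the diagonal, dispose of the (few) large diagonal entries with the rank inequality, compare the remaining matrices in a metric for singular value distributions, and finish with the triangle inequality. The difference is in the two halves. The paper truncates at the $n$-dependent level $n^{1/4}$, controls the large-entry part by Markov's inequality on the expected rank (using only $\P(\abs{x_{11}}>n^{1/4})\to0$), and then compares the truncated matrix to the zero-diagonal one via Bai--Silverstein's Corollary A.42, which requires computing expected traces and Hilbert--Schmidt norms and invoking Markov again. You instead truncate at a fixed level $K=K(\eps)$, control the number of exceedances by a Binomial/Chebyshev bound, and handle the bounded part deterministically through Weyl's inequality, since $\norm{(np_n)^{-1/2}D'}\leq K(np_n)^{-1/2}\to0$. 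Your version is more elementary (no trace computations at all), makes transparent that no moment assumption on the diagonal is used, and gives a bound uniform in $z$ since the perturbation is $z$-free; the paper's version has the advantage of reusing exactly the A.42 machinery that reappears in the subsequent off-diagonal truncation lemma. Two small points of rigor, both at the same level of brevity as the paper itself: the rank inequality is a statement about distribution functions, so it bounds the CDF-based L\'evy distance rather than the Prokhorov-style distance defined in Section \ref{sec:defs} (this is also how Bai--Silverstein's inequalities are stated, and it suffices here); and the final passage from ``L\'evy distance $\to0$ in probability'' to weak convergence of the difference against all bounded continuous test functions deserves a word about tightness (in probability) of $(\nu_{Y_{n,z}^0})$, which follows from the bounded expected normalized Hilbert--Schmidt norm -- in the way the lemma is actually used later (transferring the fixed weak limit $\nu_z$ from $Y_{n,z}^0$ to $Y_{n,z}$), the metric control you establish is exactly what is needed.
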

\begin{proof}
    Let $\nu_n$ denote the empirical singular value measure of $Y_{n,z}^*Y_{n,z}$ for some fixed $z\in\C$. For this proof we define $X_n^0$ to be the matrix formed from $X_n$ by replacing the diagonal entries with 0 and $X_n^1$ to be the matrix formed from $X_n$ after multiplying each diagonal entry $\delta_{ii}^{(n)}x_{ii}$ by the indicator $\mathbbm{1}\{\abs{x_{ii}}\leq n^{1/4}\}$.
    We analogously define $Y_{n,z}^0$ and $Y_{n,z}^1$ and the corresponding empirical singular value measures $\nu_n^0$ and $\nu_n^1$ for $(Y_{n,z}^0)^*Y_{n,z}^0$ and $(Y_{n,z}^1)^*Y_{n,z}^1$.

    The goal of this proof is to show that $\nu_n-\nu_n^0\to0$ weakly in probability as $n\to\infty$. First, we show the weak in probability convergence of $\nu_n-\nu_n^1\to0$. For this, we use Theorem A.44 in \cite{BaiSilverstein}:
    \[
        \sup_{x\in\R}\abs{\nu_n((-\infty,x])-\nu_n^1((-\infty,x])}\leq \frac{1}{n}\rank(X_n-X_n^1).
    \]
    The matrix $X_n-X_n^1$ is a diagonal matrix so that
    \[
        \frac{1}{n}\rank(X_n-X_n^1) = \frac{1}{n}\sum_{i=1}^n\mathbbm{1}\{\delta_{ii}^{(n)}x_{ii}\mathbbm{1}\{\abs{x_{ii}}>n^{1/4}\}\neq0\}.
    \]
    We show that this random variable on the right converges to 0 in probability by Markov's inequality. Indeed, taking expectation gives
    \[
        \E\frac{1}{n}\rank(X_n-X_n^1) = \P(\delta_{11}^{(n)}x_{11}\mathbbm{1}\{\abs{x_{11}}>n^{1/4}\}\neq0)\leq
        p_n\P(\abs{x_{11}}>n^{1/4})=o(1).
    \]
    Therefore, this in probability uniform convergence of the cumulative distribution functions $\nu_n((-\infty,x])-\nu_n^1((-\infty,x])$ to 0 implies the in probability weak convergence of $\nu_n-\nu_n^1$ to 0.

    Now, to compare $\nu_n^0$ and $\nu_n^1$, we have by Corollary A.42 in \cite{BaiSilverstein}:
    \[ L^4(\nu_{n}^0,\nu_{n}^1)\leq\frac{2}{n^3p_n}\left[\tr((Y^0_{n,z})^*Y_{n,z}^0+(Y^1_{n,z})^*Y_{n,z}^1)\tr\left((X_n^1-X_n^0)^*(X_n^1-X_n^0)\right)\right].
    \]

    Recall the definition of the Lévy distance $L$ given in Section \ref{sec:defs}. We will prove that this quantity on the right converges to 0 in probability by showing that $\frac{2}{n^2p_n}\tr\left((X_n^1-X_n^0)^*(X_n^1-X_n^0)\right)$ converges to 0 in probability and that $\frac{1}{n}\tr((Y^0_{n,z})^*Y_{n,z}^0+(Y^1_{n,z})^*Y_{n,z}^1)$ is bounded in probability. These claims are routine applications of Markov's inequality. For this first, we compute for the diagonal matrix $X_n^1-X_n^0$:
    \begin{align*}
    \E\frac{2}{n^2p_n}\tr\left((X_n^1-X_n^0)^*(X_n^1-X_n^0)\right) &= \frac{2}{n^2p_n}\sum_{i=1}^n\E[\delta_{ii}^{(n)}\abs{x_{ii}}^2\mathbbm{1}\{\abs{x_{ii}}\leq n^{1/4}\}]\\&=
    \frac{2}{n}\E[\abs{x_{11}}^2\mathbbm{1}\{\abs{x_{11}}\leq n^{1/4}\}]\leq \frac{2}{\sqrt{n}}.
    \end{align*}
    The first claim then follows by Markov's inequality. For the second claim, we compute
    \begin{align*}
        \E\tr((Y^0_{n,z})^*Y_{n,z}^0) &= \E\norm{(np_n)^{-1/2}X_n^0-zI_n}^2_{HS}\\&=
        \frac{1}{np_n}\sum_{i\neq j}\E[\delta_{ij}^{(n)}\abs{x_{ij}}^2]+n\abs{z}^2\\&=
        \frac{1}{n}\sum_{i\neq j}1+n\abs{z}^2\leq n(1+\abs{z}^2).
    \end{align*}
    Dividing by $n$ gives the desired control for applying Markov's to show the boundedness in probability. For $Y_{n,z}^1$, we analogously compute
    \begin{align*}
        \E\tr((Y^1_{n,z})^*Y_{n,z}^1) &= \E\norm{(np_n)^{-1/2}X_n^1-zI_n}^2_{HS}\\&\leq \frac{2}{np_n}\parens{\sum_{i\neq j}\E[\delta_{ij}^{(n)}\abs{x_{ij}}^2]+\sum_{i=1}^n\E[\delta_{ii}^{(n)}\abs{x_{ii}}^2\mathbbm{1}\{\abs{x_{ii}}\leq n^{1/4}\}]} + 2n\abs{z}^2
        \\&= \frac{2}{n}\parens{n^2-n+n\E[\abs{x_{11}}^2\mathbbm{1}\{\abs{x_{11}}\leq n^{1/4}\}]} + 2n\abs{z}^2
        \\&= \frac{2}{n}\parens{n^2+n^{1.5}} + 2n\abs{z}^2.
    \end{align*}
    Dividing be $n$ here again gives the desired control to invoke Markov's inequality. It now follows that that $L(\nu_n^0,\nu_n^1)$ converges to 0 in probability so that $\nu_n^0-\nu_n^1\to0$ weakly in probability. Via the established weak convergences $\nu_n-\nu_n^1\to0$, $\nu_n^0-\nu_n^1\to0$, and the triangle inequality, we have that $\nu_n-\nu_n^0\to0$ weakly in probability.
\end{proof}

\begin{remark}
    By the preceding lemma, we assume without loss of generality for the remainder of this section that the diagonal entries of $X_n$ are 0.
\end{remark}

	We continue on with the truncation by associating to our sparse elliptic matrix model $(X_n)_{n\geq1}$ two other sequences of random matrices: for all $1\leq i,j\leq n$, we define $(\widehat{X}_n)_{n\geq1}$ and $(\widetilde{X}_n)_{n\geq1}$ by
	\[
		(\widehat{X}_n)_{ij} = \begin{cases}
		    (X_n)_{ij}\mathbbm{1}\{\abs{x_{ij}}\leq (np_n)^{1/4}\}	-\E[(X_n)_{ij}\mathbbm{1}\{\abs{x_{ij}}\leq (np_n)^{1/4}\}] & i\neq j\\
            0 &i=j
	\end{cases}
	\]
	and
	\[
		(\widetilde{X}_n)_{ij} =
		\begin{cases}
			\frac{\sqrt{p_n}(\widehat{X}_n)_{ij}}{\sqrt{\E\abs{(\widehat{X}_n)_{ij}}^2}} &i\neq j\\
			0 & i=j
		\end{cases}.
	\]
    Similarly, we define
	\[
	 \widehat{H}_{n,z} = \left((np_n)^{-1/2}\widehat{X}_n-zI_n\right)^*\left((np_n)^{-1/2}\widehat{X}_n-zI_n\right)
	\]
	and
	\[
	\widetilde{H}_{n,z} = \left((np_n)^{-1/2}\widetilde{X}_n-zI_n\right)^*\left((np_n)^{-1/2}\widetilde{X}_n-zI_n\right).
	\]
    We prove the following lemma for our truncated, centralized, and rescaled matrix $\widetilde{X}_n$:
	
	\begin{lem}\label{lem:truncation}
		Let $({X_n})_{n\geq1}$ be a sparse elliptic random matrix model coming from Definition \ref{def:sparseellipticmodel} and satisfying the hypotheses of Theorem \ref{thm:auxiliarymaintheorem}. Then uniformly for any $|z|\leq M$ (for some $M>0$), we have that with $$H_{n,z} = Y_{n,z}^*Y_{n,z}= \parens{(np_n)^{-1/2}X_n-zI_n}^*\parens{(np_n)^{-1/2}X_n-zI_n}$$ and $\widetilde{H}_{n,z}$ as defined above,
		
		\[
			L(\nu_{H_{n,z}},\nu_{\widetilde{H}_{n,z}})\to 0
		\]
		in probability, as $n\to\infty$. Recall the definition of the Lévy distance $L$ as given in Section \ref{sec:defs}.
        
    \end{lem}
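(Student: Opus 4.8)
The plan is to establish the Lévy-distance bound in two stages: first pass from $H_{n,z}$ to $\widehat{H}_{n,z}$ (truncation), then from $\widehat{H}_{n,z}$ to $\widetilde{H}_{n,z}$ (centralization and rescaling). For the first stage I would use the standard rank-perturbation inequality (Theorem A.44 in \cite{BaiSilverstein}): since $Y_{n,z}$ and $(np_n)^{-1/2}\widehat X_n - zI_n$ differ only by $(np_n)^{-1/2}$ times the truncation error matrix (before centering), I first replace $X_n$ by its entrywise truncation $X_n^{\mathrm{tr}}$ with $(X_n^{\mathrm{tr}})_{ij} = (X_n)_{ij}\mathbbm 1\{|x_{ij}|\leq (np_n)^{1/4}\}$. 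The matrix $X_n - X_n^{\mathrm{tr}}$ has i.i.d.\ entries off the diagonal equal to $\delta_{ij}^{(n)}x_{ij}\mathbbm 1\{|x_{ij}|>(np_n)^{1/4}\}$; its rank over $n$ is controlled in probability by Markov, since $\E\,\mathrm{rank}(X_n - X_n^{\mathrm{tr}})/n \leq n\,p_n\,\P(|x_{11}|>(np_n)^{1/4})$, and $\P(|\xi_1|>(np_n)^{1/4}) = o((np_n)^{-1})$ because $\E|\xi_1|^2<\infty$ and $np_n\to\infty$. So $\frac1n\mathrm{rank}(X_n - X_n^{\mathrm{tr}})\to 0$ in probability, which gives $L(\nu_{H_{n,z}},\nu_{(np_n)^{-1/2}X_n^{\mathrm{tr}} - zI_n})\to 0$ uniformly in $|z|\leq M$.

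For the centralization step, note $\widehat X_n = X_n^{\mathrm{tr}} - \E X_n^{\mathrm{tr}}$ off the diagonal, and $\E[(X_n^{\mathrm{tr}})_{ij}] = p_n\,\E[\xi_1\mathbbm 1\{|\xi_1|\leq (np_n)^{1/4}\}]$, whose modulus is $O(p_n\,(np_n)^{-3/4}\E|\xi_1|^2) = o(p_n (np_n)^{-1/4})$ by dominated convergence (using $\E\xi_1 = 0$). Hence $\E\|X_n^{\mathrm{tr}} - \widehat X_n\|_{HS}^2 = O(n^2 p_n^2 (np_n)^{-1/2}) = o(n^2 p_n)$, so $\frac1{np_n}\|X_n^{\mathrm{tr}} - \widehat X_n\|_{HS}^2 \to 0$ in probability. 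Combined with the fact that $\frac1n\tr H_{n,z}$ and $\frac1n\tr\widehat H_{n,z}$ are bounded in probability (same moment computations as in Lemma \ref{lem:zerodiagonal}), Corollary A.42 in \cite{BaiSilverstein} of the form
\[
    L^4(\nu_{H_{n,z}},\nu_{\widehat H_{n,z}}) \leq \frac{2}{n}\left(\tr H_{n,z} + \tr \widehat H_{n,z}\right)\cdot \frac{1}{n^2 p_n}\tr\left((X_n^{\mathrm{tr}} - \widehat X_n)^*(X_n^{\mathrm{tr}} - \widehat X_n)\right)
\]
gives $L(\nu_{H_{n,z}},\nu_{\widehat H_{n,z}})\to 0$ in probability. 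The rescaling from $\widehat H_{n,z}$ to $\widetilde H_{n,z}$ is handled identically: $\widetilde X_n = \lambda_{ij}\widehat X_n$ entrywise with $\lambda_{ij} = \sqrt{p_n}/\sqrt{\E|(\widehat X_n)_{ij}|^2}$, and one checks $\E|(\widehat X_n)_{ij}|^2 = p_n\,\mathrm{Var}(\xi_1\mathbbm 1\{|\xi_1|\leq (np_n)^{1/4}\}) + O(p_n^2) \to p_n(1 + o(1))$, so $\lambda_{ij} = 1 + o(1)$ uniformly; thus $\frac1{np_n}\|\widetilde X_n - \widehat X_n\|_{HS}^2 = (\max_{ij}|\lambda_{ij}-1|)^2\cdot\frac1{np_n}\|\widehat X_n\|_{HS}^2\to 0$ in probability, and the same Corollary A.42 bound closes the argument.

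The main obstacle is bookkeeping of the truncation level $(np_n)^{1/4}$ against the two competing scalings $\sqrt{p_n}$ (for Hilbert–Schmidt norms after the $(np_n)^{-1/2}$ normalization) and the sparsity $p_n$ in tail probabilities — one must verify every estimate degrades like $o(1)$ \emph{uniformly} over $|z|\leq M$ and that $np_n\to\infty$ is exactly what makes $\P(|\xi_1|>(np_n)^{1/4}) = o((np_n)^{-1})$ work despite only assuming a second moment. The argument itself is the classical truncate–center–rescale routine; the only genuinely new feature relative to the dense elliptic case in \cite{ellipticlaw} is carrying the factor $p_n$ correctly through the Bernoulli variables, and the preceding Lemma \ref{lem:zerodiagonal} has already absorbed the diagonal, so no new ideas are needed beyond careful accounting.
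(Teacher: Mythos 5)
Your second and third stages (centering and rescaling via Corollary A.42 of \cite{BaiSilverstein}) are essentially the paper's argument, but your first stage contains a genuine gap: you handle the truncation $X_n \mapsto X_n^{\mathrm{tr}}$ with the rank inequality (Theorem A.44), and this step fails under the standing assumption that $\xi_1,\xi_2$ have only finite second moments. Your claim $\P(|\xi_1|>(np_n)^{1/4}) = o((np_n)^{-1})$ is not justified: Chebyshev together with dominated convergence only gives
\[
\P\bigl(|\xi_1|>(np_n)^{1/4}\bigr) \;\leq\; \frac{\E\bigl[|\xi_1|^2\,\mathbbm{1}\{|\xi_1|>(np_n)^{1/4}\}\bigr]}{(np_n)^{1/2}} \;=\; o\bigl((np_n)^{-1/2}\bigr),
\]
and nothing better in general (take, e.g., $\P(|\xi_1|>t)\sim t^{-2}(\log t)^{-2}$, which has a finite second moment but a tail far heavier than $t^{-4}$). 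Consequently the expected number of off-diagonal entries exceeding the threshold is of order $n^2p_n\,\P(|\xi_1|>(np_n)^{1/4})$, which can be as large as $n\sqrt{np_n}\cdot o(1)$, so $\frac{1}{n}\rank(X_n-X_n^{\mathrm{tr}})$ need not tend to $0$; a rank bound for the truncation would essentially require four moments of the atom variables. (The rank argument in Lemma \ref{lem:zerodiagonal} works only because the diagonal has just $n$ entries.)

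The fix is to treat the truncation with the same trace/Hilbert--Schmidt estimate you already use for the centering: this is what the paper does, defining $\widehat{X}_n$ as the truncated \emph{and} centered matrix in one stroke and bounding
\[
\frac{1}{n^2p_n}\,\E\tr\bigl((X_n-\widehat{X}_n)^*(X_n-\widehat{X}_n)\bigr) \;\leq\; \frac{1}{n^2p_n}\sum_{i\neq j}\E\bigl[\delta_{ij}^{(n)}|x_{ij}|^2\,\mathbbm{1}\{|x_{ij}|>(np_n)^{1/4}\}\bigr] \;=\;o(1),
\]
which requires only $\E[|\xi_1|^2\mathbbm{1}\{|\xi_1|>(np_n)^{1/4}\}]\to 0$, i.e.\ exactly the second-moment hypothesis. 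With that replacement your outline matches the paper's proof. Two minor slips to correct as well: in your prose you normalize the Hilbert--Schmidt errors by $\frac{1}{np_n}$, whereas Corollary A.42 (and your own display) requires $\frac{1}{n^2p_n}$ --- with $\frac{1}{np_n}$ your claims such as $\frac{1}{np_n}\|X_n^{\mathrm{tr}}-\widehat{X}_n\|_{HS}^2\to 0$ and the rescaling estimate $(\max_{ij}|\lambda_{ij}-1|)^2\cdot\frac{1}{np_n}\|\widehat{X}_n\|_{HS}^2\to 0$ are false or unproven, while with $\frac{1}{n^2p_n}$ they go through exactly as in the paper.
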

	\begin{proof}
		By \cite[Corollary A.42]{BaiSilverstein},
        \begin{equation}\label{eq:levybound}
        L^4(\nu_{H_{n,z}},\nu_{\widehat{H}_{n,z}})\leq\frac{2}{n^3p_n}\left[\tr(H_{n,z}+\widehat{H}_{n,z})\tr\left((X_n-\widehat{X}_n)^*(X_n-\widehat{X}_n)\right)\right].
		\end{equation}
		We will show that this right hand side converges to 0 in probability by showing that $\frac{2}{n^2p_n}\tr\left((X_n-\widehat{X}_n)^*(X_n-\widehat{X}_n)\right)$ converges to 0 in probability and that $\frac{1}{n}\tr(H_{n,z}+\widehat{H}_{n,z})$ is bounded in probability. Both of these claims are routine applications of Markov's inequality. For the first,
		\begin{align*}
			\E \tr\left((X_n-\widehat{X}_n)^*(X_n-\widehat{X}_n)\right) &= \sum_{i\neq j}\E\left[ |\delta_{ij}^{(n)}x_{ij}\mathbbm{1}_{\abs{x_{ij}}>(np_n)^{1/4}}-p_n\E[x_{ij}\mathbbm{1}_{\abs{x_{ij}}>(np_n)^{1/4}}]|^2\right]\\&=
            \sum_{i\neq j}\var(\delta_{ij}^{(n)}x_{ij}\mathbbm{1}_{\abs{x_{ij}}>(np_n)^{1/4}})
            \\&\leq \sum_{i\neq j}\E[\delta_{ij}^{(n)}\abs{x_{ij}}^2\mathbbm{1}_{\abs{x_{ij}}>(np_n)^{1/4}}]\\&\leq
            \frac{n^2}{2}p_n\E[\abs{x_{12}}^2\mathbbm{1}_{\abs{x_{12}}>(np_n)^{1/4}}]+\frac{n^2}{2}p_n\E[\abs{x_{21}}^2\mathbbm{1}_{\abs{x_{21}}>(np_n)^{1/4}}].
		\end{align*}
		in the first equality, we are using that we already replaced the diagonal entries of $X_n$ with zeros. Therefore, by Markov's inequality and the Dominated Convergence Theorem, $$\frac{2}{n^2p_n}\tr\left((X_n-\widehat{X}_n)^*(X_n-\widehat{X}_n)\right)$$ converges to 0 in probability as $n\to\infty$. For the second claim, we have already shown that $\E\tr H_n = o_M(n)$ in the proof of Lemma \ref{lem:zerodiagonal}, see the computation of $\E\tr((Y^0_{n,z})^*Y_{n,z}^0)$. By Markov's inequality, we have that $\frac{1}{n}\tr H_n$ is bounded in probability. For $\frac{1}{n}\tr \widehat{H}_n$, we have
        \begin{align*}
            \E \tr \widehat{H}_n &= \frac{1}{np_n}\sum_{i\neq j}\E\abs{(\widehat{X}_n)_{ij}}^2+n\abs{z}^2\\&\leq\frac{1}{n}\sum_{i\neq j}\E[ \abs{x_{ij}}^2\mathbbm{1}_{\abs{x_{ij}}\leq (np_n)^{1/4}}]+n\abs{z}^2\\&\leq
            \frac{n}{2}\E[ \abs{x_{12}}^2\mathbbm{1}_{\abs{x_{12}}\leq (np_n)^{1/4}}]+\frac{n}{2}\E[ \abs{x_{21}}^2\mathbbm{1}_{\abs{x_{21}}\leq (np_n)^{1/4}}]+nM^2.
        \end{align*}
        By the Dominated Convergence Theorem and Markov's inequality, it follows that $\frac{1}{n}\tr \widehat{H}_n$ is bounded in probability as well.
        
        Therefore, by \eqref{eq:levybound} and the previous discussion, $L(\nu_{H_{n,z}},\nu_{\widehat{H}_{n,z}})$ converges to 0 in probability.
		
	Now, we turn to comparing $\nu_{\widehat{H}_{n,z}}$ and $\nu_{\widetilde{H}_{n,z}}$. Again by \cite[Corollary A.42]{BaiSilverstein}, we have
    \begin{equation}\label{eq:levybound2}
			L^4(\nu_{\widehat{H}_{n,z}},\nu_{\widetilde{H}_{n,z}})\leq\frac{2}{n^3p_n}\left[\tr(\widehat{H}_{n,z}+\widetilde{H}_{n,z})\tr\left((\widehat{X}_n-\widetilde{X}_n)^*(\widehat{X}_n-\widetilde{X}_n)\right)\right].
		\end{equation}
        As above, will show that $\frac{2}{n^2p_n}\tr\left((\widehat{X}_n-\widetilde{X}_n)^*(\widehat{X}_n-\widetilde{X}_n)\right)$ converges to 0 in probability and that $\frac{1}{n}\tr(\widehat{H}_{n,z}+\widetilde{H}_{n,z})$ is bounded in probability. For the first statement, we have the expected value
        \begin{align*}
            \E\tr\left(\widehat{X}_n-\widetilde{X}_n)^*(\widehat{X}_n-\widetilde{X}_n)\right) &=
            \sum_{i\neq j}\E\left|(\widehat{X}_n)_{ij}-\frac{\sqrt{p_n}(\widehat{X}_{n})_{ij}}{\sqrt{\E\abs{(\widehat{X}_n)_{ij}}^2}}\right|^2\\&=
            \sum_{i\neq j}\parens{1-\sqrt{\frac{p_n}{\E\abs{(\widehat{X}_n)_{ij}}^2}}}^2\E\abs{(\widehat{X}_n)_{ij}}^2\\&\leq
            \frac{n^2}{2}\parens{1-\sqrt{\frac{p_n}{\E\abs{(\widehat{X}_n)_{12}}^2}}}^2\E\abs{(\widehat{X}_n)_{12}}^2\\&\quad+
            \frac{n^2}{2}\parens{1-\sqrt{\frac{p_n}{\E\abs{(\widehat{X}_n)_{21}}^2}}}^2\E\abs{(\widehat{X}_n)_{21}}^2
            \\&\leq
            \frac{n^2}{2}p_n\parens{1-\sqrt{\frac{p_n}{\E\abs{(\widehat{X}_n)_{12}}^2}}}^2\E \abs{x_{12}}^2\mathbbm{1}_{\abs{x_{12}}\leq(np_n)^{1/4}}\\&\quad+
            \frac{n^2}{2}p_n\parens{1-\sqrt{\frac{p_n}{\E\abs{(\widehat{X}_n)_{21}}^2}}}^2\E \abs{x_{21}}^2\mathbbm{1}_{\abs{x_{21}}\leq(np_n)^{1/4}}.
        \end{align*}
        We now note that by the Dominated Convergence Theorem,
        \[
            \lim_{n\to\infty}\frac{\E\abs{(\widehat{X}_n)_{ij}}^2}{p_n} = 1
        \]
        uniformly for $i\neq j$. Therefore, finishing the above chain of inequalities,
        \[
            \frac{1}{n^2p_n}\E\tr\left((\widehat{X}_n-\widetilde{X}_n)^*(\widehat{X}_n-\widetilde{X}_n)\right) = o(1)
        \]
        after another application of the Dominated Convergence Theorem. Thus, By Markov's inequality, $\frac{2}{n^2p_n}\tr\left((\widehat{X}_n-\widetilde{X}_n)^*(\widehat{X}_n-\widetilde{X}_n)\right)$ converges to 0 in probability. From the arguments above, we already know that $\frac{1}{n}\tr \widehat{H}_n$ is bounded in probability. For $\frac{1}{n}\tr \widetilde{H}_n$, we compute
        \[
            \E \tr \widetilde{H}_n = \frac{1}{np_n}\sum_{i\neq j}\E\abs{(\widetilde{X}_n)_{ij}}^2+n\abs{z}^2 \leq
            n+n\abs{z}^2\leq
            n(1+M^2).
        \]
        Again by Markov's inequality, we have that $\frac{1}{n}\tr \widetilde{H}_n$ is bounded in probability and thus by \eqref{eq:levybound2} we have $L(\nu_{\widehat{H}_{n,z}},\nu_{\widetilde{H}_{n,z}})\to 0$ in probability. Combining the convergence of $L(\nu_{H_{n,z}},\nu_{\widehat{H}_{n,z}})$ and $L(\nu_{\widehat{H}_{n,z}},\nu_{\widetilde{H}_{n,z}})$ to 0 in probability via the triangle inequality gives the desired result of this lemma.
	\end{proof}

    Before the comparison step, we also confirm that truncating, centering, and rescaling our matrix entries does not change the moments of the real and imaginary parts of the entries of our new matrix by too much:
    
    \begin{lem}\label{lem:newmoments}
        With $\widetilde{X}_n$ as defined before Lemma \ref{lem:truncation},
        \begin{align*}
        &\E[\Real((\widetilde{X}_n)_{ij})^a\Imag((\widetilde{X}_n)_{ij})^b\Real((\widetilde{X}_n)_{ji})^c\Imag((\widetilde{X}_n)_{ji})^d]\\=& \E[\Real(({X}_n)_{ij})^a\Imag(({X}_n)_{ij})^b\Real(({X}_n)_{ji})^c\Imag(({X}_n)_{ji})^d] + o(1)
        \end{align*}
        uniformly in $i\neq j$ and for all nonnegative integers $a,b,c,d$ satisfying $a+b+c+d = 2$.
	\end{lem}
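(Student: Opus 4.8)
The plan is to carry this out by a routine second-moment computation, leaning on the Dominated Convergence Theorem and on the uniform ratio estimate $\E|(\widehat X_n)_{ij}|^2/p_n\to 1$ (uniformly for $i\neq j$) already established inside the proof of Lemma~\ref{lem:truncation}. Set $t_n:=(np_n)^{1/4}$, which tends to $\infty$ since $np_n\to\infty$, and $\chi_{ij}:=\mathbbm{1}\{|x_{ij}|\le t_n\}$. Since $(x_{ij},x_{ji})$ is distributed as $(\xi_1,\xi_2)$ when $i<j$ and as $(\xi_2,\xi_1)$ when $i>j$, all expectations below depend only on $n$ and on which of these two cases holds; hence every ``$o(1)$ uniformly in $i\neq j$'' reduces to the convergence of a single sequence, and I will not belabour uniformity. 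With this understood, each of the four quantities $\Real((\widetilde X_n)_{ij}),\Imag((\widetilde X_n)_{ij}),\Real((\widetilde X_n)_{ji}),\Imag((\widetilde X_n)_{ji})$ can be written in the form $\lambda\,(\delta\,u\,\chi-m_u)$, where $u$ is the corresponding part ($\Real$ or $\Imag$) of $x_{ij}$ or $x_{ji}$, $\delta$ is the matching Bernoulli variable $\delta^{(n)}_{ij}$ or $\delta^{(n)}_{ji}$, $\chi$ is $\chi_{ij}$ or $\chi_{ji}$ accordingly, $m_u:=p_n\,\E[u\,\chi]$ is a deterministic centering, and $\lambda$ is the deterministic scalar $\sqrt{p_n}/\sqrt{\E|(\widehat X_n)_{ij}|^2}$ (or its analogue for the $(j,i)$ entry).

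Because $a+b+c+d=2$, the monomial in the statement is a product $PQ$ of exactly two of these four quantities (with repetition allowed). First I would record three facts, each a limit of a single sequence: (a) the scalars $\lambda$ tend to $1$, hence are bounded for large $n$ — this is precisely the ratio estimate from Lemma~\ref{lem:truncation}; (b) every centering satisfies $m_u\to0$, since $\E u=0$ gives $|m_u|=p_n\,|\E[u\,\mathbbm{1}\{|x_{ij}|>t_n\}]|\le p_n\,\E[|x_{ij}|\,\mathbbm{1}\{|x_{ij}|>t_n\}]=o(1)$ by Dominated Convergence, with $p_n\le1$ and $\E|x_{ij}|\le1$ (and similarly with $x_{ji}$); and (c) for parts $u,v$ taken from $x_{ij}$ or $x_{ji}$, both $\E[uv\,\chi_{ij}]$ and $\E[uv\,\chi_{ij}\chi_{ji}]$ converge to $\E[uv]$ by Dominated Convergence, using $|uv|\le\frac12(u^2+v^2)$ and square-integrability of the parts of $\xi_1,\xi_2$. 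Then I would expand
\[
    \E[PQ]=\lambda\lambda'\,\E\big[(\delta\,u\,\chi-m_u)(\delta'\,v\,\chi'-m_v)\big]
\]
into four terms. The three terms containing a factor $m_u$ or $m_v$ are each $o(1)$: in each, the remaining expectation is of a product of at most two variables of $L^2$-norm $\le1$, hence $O(1)$, and it is multiplied by a centering (by two, in the $m_um_v$ term). For the surviving term $\lambda\lambda'\,\E[\delta\delta'\,uv\,\chi\chi']$, independence of the Bernoulli block from $(x_{ij},x_{ji})$ (Definition~\ref{def:sparseellipticmodel}(i)) factors it as $\lambda\lambda'\,\E[\delta\delta']\,\E[uv\,\chi\chi']$, where $\E[\delta\delta']=p_n$ when $P,Q$ come from the same entry (using $(\delta^{(n)}_{ij})^2=\delta^{(n)}_{ij}$) and $\E[\delta\delta']=\E[\delta^{(n)}_{ij}\delta^{(n)}_{ji}]=p_n^2+p_n\tau_n$ when they come from different entries (Definition~\ref{def:Bernoullimatrix}(ii),(iii)); by (c) the remaining $x$-expectation is $\E[uv]+o(1)$.

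Combining these with (a) gives $\E[PQ]=(1+o(1))\big(\E[\delta\delta']\,\E[uv]+o(1)\big)$, and since $|\E[\delta\delta']\,\E[uv]|\le1$ this equals $\E[\delta\delta']\,\E[uv]+o(1)$. Finally I would identify $\E[\delta\delta']\,\E[uv]$ with the corresponding second moment of $X_n$: since $(X_n)_{ij}=\delta^{(n)}_{ij}x_{ij}$ with the Bernoulli block independent of $(x_{ij},x_{ji})$, one has, for example, $\E[\Real((X_n)_{ij})\Imag((X_n)_{ji})]=\E[\delta^{(n)}_{ij}\delta^{(n)}_{ji}]\,\E[\Real(x_{ij})\Imag(x_{ji})]$ and $\E[\Real((X_n)_{ij})^2]=\E[\delta^{(n)}_{ij}]\,\E[\Real(x_{ij})^2]$, and likewise in the finitely many remaining sub-cases — matching exactly the value above. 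This yields the claim. I do not anticipate any genuine obstacle here; the only care needed is the bookkeeping over the sub-cases (which entry each factor of the monomial comes from, and whether $\Real$ or $\Imag$ is taken) together with the observation that each error is a true $o(1)$ in $n$ rather than merely $O(1)$ — which, as noted, is immediate once one fixes whether $i<j$ or $i>j$, reducing everything to Dominated Convergence along a single deterministic sequence.
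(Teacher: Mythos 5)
Your proposal is correct and rests on the same ingredients as the paper's own argument: the Dominated Convergence Theorem, the vanishing of the centering terms, and the uniform ratio $\E|(\widehat{X}_n)_{ij}|^2/p_n\to1$ from the proof of Lemma \ref{lem:truncation}. The only difference is organizational — you expand the generic second moment of $\widetilde{X}_n$ in one shot and identify it with $\E[\delta\delta']\,\E[uv]$, whereas the paper compares $X_n$ to $\widehat{X}_n$ and then $\widehat{X}_n$ to $\widetilde{X}_n$ case by case — so this is essentially the same proof.
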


    \begin{proof}
        When one of $a$, $b$, $c$, or $d$ is 2, we have that (in the case of $a=2$)
        \[
        \E [\Real((X_n)_{ij})^2 - \Real((\widehat{X}_n)_{ij})^2]=
        p_n\E[\Real(x_{ij})^2\mathbbm{1}_{\abs{x_{ij}}>(np_n)^{1/4}}]+p_n^2(\E[\Real(x_{ij})\mathbbm{1}_{\abs{x_{ij}}\leq (np_n)^{1/4}}])^2.
        \]
        By the Dominated Convergence Theorem, this quantity on the right hand side tends to zero uniformly in $i\neq j$. Now, we have that
        \begin{equation}\label{eq:3.4match2}
            \E [\Real((\widehat{X}_n)_{ij})^2 - \Real((\widetilde{X}_n)_{ij})^2]= \parens{1-\frac{p_n}{\E\abs{(\widehat{X}_n)_{ij}}^2}}\E\Real((\widehat{X}_n)_{ij})^2.
        \end{equation}
        
        From the proof of Lemma \ref{lem:truncation}, we have that this quantity tends to 0 uniformly in $i\neq j$ as well.
        A similar argument holds for when $b$, $c$, or $d$ is 2.

        For when there are only ones appearing in our sum $a+b+c+d=2$ (we will only walk through the case when $a=b=1$)  we have that
        \begin{align}
        \nonumber\E [\Real((\widehat{X}_n)_{ij})\Imag((\widehat{X}_n)_{ij})] &= p_n\E[\Real(x_{ij})\Imag(x_{ij})\mathbbm{1}_{\abs{x_{ij}}\leq(np_n)^{1/4}}]\\
        \label{eq:3.4match}&\quad -
        p_n^2\E[\Real(x_{ij})\mathbbm{1}_{\abs{x_{ij}}\leq(np_n)^{1/4}}]
        \E[\Imag(x_{ij})\mathbbm{1}_{\abs{x_{ij}}\leq(np_n)^{1/4}}].
        \end{align}
        We have that $\E[\Real((X_n)_{ij})\Imag((X_n)_{ij})]=p_n\E[\Real(x_{ij})\Imag(x_{ij})]$ and so subtracting this from \eqref{eq:3.4match}, we have that
        \[
            \E [\Real((\widehat{X}_n)_{ij})\Imag((\widehat{X}_n)_{ij})]-\E [\Real(({X}_n)_{ij})\Imag(({X}_n)_{ij})]=o(1)
        \]
        by the Dominated Convergence Theorem, uniformly in $i\neq j$. Using a similar computation like \eqref{eq:3.4match2} and the Dominated Convergence Theorem, we also have that
        \[
            \E [\Real((\widetilde{X}_n)_{ij})\Imag((\widetilde{X}_n)_{ij})]-\E [\Real((\widehat{X}_n)_{ij})\Imag((\widehat{X}_n)_{ij})]=o(1)
        \]
        uniformly in $i\neq j$. Similar arguments using the Dominated Convergence and various other facts already mentioned here give the desired result for any four nonnegative integers $a,b,c,d$ that sum to 2.
    \end{proof}

        \subsection{Step 3: The comparison step}
	
	With the truncation out of the way, we now prove a comparison lemma for our matrix $Y_{n,z}$. Before the lemma, though, we give some notation. For the vector $(u_1,u_2,u_3,u_4):=(\Real(\xi_1),\Imag(\xi_1),\Real(\xi_2),\Imag(\xi_2))$, we define the covariances
    \begin{equation}\label{eq:sparsecovariances}
        c_{ij} = \E[u_iu_j]
    \end{equation}
    for $1\leq i\leq j\leq 4$. Given our assumptions on our atom variables $(\xi_1,\xi_2)$, we do have some relations between the $c_{ij}$. For example, $c_{11}+c_{22}=1=c_{33}+c_{44}$. Also, given that $\E[\xi_1\xi_2]=\rho_1\in[0,1)$, it must be that $c_{13}-c_{24}=\rho_1$ and $c_{14}+c_{23}=0$. We furthermore define the value $\rho:=\tau+p\in[-1,1]$.
	\begin{lem}\label{lem:comparison}
		Let $(X_n)_{n\geq 1}$ be a sparse elliptic random matrix model coming from Definition \ref{def:sparseellipticmodel} with atom variable $(\xi_1,\xi_2)$ satisfying $\E[\xi_1\xi_2] = \rho_1\in(-1,1)$ and sparsity parameters $(p_n)_{n\geq1}$ and $\tau$. Furthermore, let $(W_n)_{n\geq1}$ be a dense elliptic random matrix model coming from Definition \ref{def:ellipticmodel} with complex-valued atom variable $(\psi_1,\psi_2)$. We specify that $(\psi_1,\psi_2)$ is a jointly Gaussian vector with the vector $\Psi = (\Real(\psi_1),\Imag(\psi_1),\Real(\psi_2),\Imag(\psi_2))^T$ satisfying:
        \begin{equation}\label{eq:densecovariancematrix}
        \E\Psi\Psi^T = \begin{pmatrix}
            c_{11} & c_{12} & \rho c_{13} & \rho c_{14}\\
            c_{12} & c_{22} & \rho c_{23} & \rho c_{24}\\
            \rho c_{13} & \rho c_{23} & c_{33} & c_{34}\\
            \rho c_{14} & \rho c_{24} & c_{34} & c_{44}\\
        \end{pmatrix}.
        \end{equation}
        Recall that the $c_{ij}$ are coming from \eqref{eq:sparsecovariances}. We note here that in this setup, $\E[\psi_1\psi_2]=\rho_1\rho = \rho_1(\tau+p)\in (-1,1)$. Then for any fixed $z\in\C$,
		\[
			\nu_{Y_{n,z}}-\nu_{\frac{1}{\sqrt{n}}W_n-zI_n}\to0
		\]
		weakly in probability as $n\to\infty$, where $Y_{n,z}$ is defined in \eqref{eq:rotatedmatrix}.
	\end{lem}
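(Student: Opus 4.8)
The plan is to run a Lindeberg entry‑replacement (interpolation) argument comparing $Y_{n,z}$, through its truncated form, with the Gaussian model $\tfrac1{\sqrt n}W_n-zI_n$; the whole point of the covariance structure \eqref{eq:densecovariancematrix} is that the limiting singular value distribution of a centered matrix depends only on the second moments of its entries. By Lemmas \ref{lem:zerodiagonal} and \ref{lem:truncation} and the triangle inequality for the Lévy distance, it suffices to compare $\widetilde Y_{n,z}:=(np_n)^{-1/2}\widetilde X_n-zI_n$ with $\tfrac1{\sqrt n}W_n-zI_n$, and by the analogue of Lemma \ref{lem:zerodiagonal} for $W_n$ (or directly via \cite[Corollary A.42]{BaiSilverstein}, since a diagonal matrix of $O(n^{-1/2})$ entries perturbs the empirical singular value measure negligibly) we may also take $W_n$ to have zero diagonal. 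Using $\mu_{A^*A}((-\infty,x])=\nu_A((-\infty,\sqrt x])$ and that $s\mapsto\sqrt s$ is a homeomorphism of $[0,\infty)$, it is enough to show $\mu_{\widetilde Y_{n,z}^*\widetilde Y_{n,z}}-\mu_{(\frac1{\sqrt n}W_n-zI_n)^*(\frac1{\sqrt n}W_n-zI_n)}\to0$ weakly in probability; since both sequences of random measures are tight in probability (their means being $O(1)$ in probability by the trace estimates in Lemma \ref{lem:truncation}), a standard Stieltjes‑transform/normal‑families argument reduces this to $\tfrac1n\tr R_{n,z}(\alpha)-\tfrac1n\tr\widetilde R_{n,z}(\alpha)\to0$ in probability for each fixed $\alpha$ with $\Imag(\alpha)\neq0$, where $R_{n,z}$ and $\widetilde R_{n,z}$ are the resolvents of the two Hermitian matrices. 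Finally, the martingale concentration argument of Lemma \ref{lem:concentrationofESM}, applied both to $\widetilde X_n$ and to $W_n$ (a dense elliptic model, to which it equally applies), shows each trace equals its mean up to an a.s.\ $O_\alpha(n^{-1/8})$ error, so we are reduced to proving $\E\tfrac1n\tr R_{n,z}(\alpha)-\E\tfrac1n\tr\widetilde R_{n,z}(\alpha)\to0$.

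To make the resolvent a tractable function of the matrix entries I would pass to the Hermitian dilation. For an $n\times n$ matrix $A$ and $w$ in the upper half‑plane write $\mathbb G(A,w):=\left(\begin{pmatrix}0 & A\\ A^* & 0\end{pmatrix}-wI_{2n}\right)^{-1}$; a Schur‑complement identity expresses $\tfrac1n\tr R_{n,z}(\alpha)$ as an explicit scalar multiple of $\tfrac1{2n}\tr\mathbb G(Y_{n,z},w)$ when $w^2=\alpha$, while $\norm{\mathbb G(A,w)}\le1/\Imag(w)$ and the entries of $A$ appear linearly in the dilation, so $A\mapsto\tfrac1{2n}\tr\mathbb G(A,w)$ has first, second and third partial derivatives in each scalar entry of $A$ bounded in modulus by $O_w(1/n)$ (each being a $\tfrac1{2n}$‑normalized trace of a product of boundedly many copies of $\mathbb G$ with boundedly many rank‑one insertions). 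Now enumerate the $\binom n2$ super‑diagonal mirrored pairs $(i,j)$, work on a common probability space carrying the $\widetilde X_n$‑randomness and an independent copy of the $W_n$‑randomness, and form the telescoping chain of $n\times n$ matrices $Z^{(0)}=(np_n)^{-1/2}\widetilde X_n$, $Z^{(1)}$, $\dots$, $Z^{(m)}=\tfrac1{\sqrt n}W_n$, where $Z^{(k)}$ has the first $k$ mirrored pairs taken from $\tfrac1{\sqrt n}W_n$ and the remaining ones from $(np_n)^{-1/2}\widetilde X_n$, all diagonals zero. With $Y^{(k)}:=Z^{(k)}-zI_n$, the target is $\sum_{k=1}^{m}\bigl(\E\tfrac1{2n}\tr\mathbb G(Y^{(k-1)},w)-\E\tfrac1{2n}\tr\mathbb G(Y^{(k)},w)\bigr)\to0$, and the $k$‑th summand's two matrices differ only in the two positions $(i,j),(j,i)$ of the $k$‑th pair.

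For fixed $k$, Taylor‑expand $A\mapsto\tfrac1{2n}\tr\mathbb G(A,w)$ about the common part $S$ (the matrix obtained by zeroing the $k$‑th pair, identical in both terms) in the four real variables $\Real((Z)_{ij}),\Imag((Z)_{ij}),\Real((Z)_{ji}),\Imag((Z)_{ji})$, to second order with third‑order integral remainder. The zeroth‑order terms cancel. Both inserted pairs are mean zero and independent of $S$, so the first‑order terms have zero expectation. The expectation of the second‑order term is a fixed bilinear form in the second moments of the real and imaginary parts of the two inserted entries, with coefficients $\E\bigl[\partial^2\bigl(\tfrac1{2n}\tr\mathbb G\bigr)(S)\bigr]=O_w(1/n)$ that are identical for the two chains; and the required second moments match: the entries of $(np_n)^{-1/2}\widetilde X_n$ are mean zero with within‑position second moments $\tfrac1n(c_{ij}+o(1))$ (Lemma \ref{lem:newmoments}, whose proof in fact yields all truncation/centering/rescaling errors of the form $p_n\cdot o(1)$ uniformly, divided by $np_n$) and across‑the‑mirrored‑pair second moments $\tfrac1n\bigl((\tau_n+p_n)c_{ij}+o(1)\bigr)$ (the same lemma together with $\E[\eta_1^{(n)}\eta_2^{(n)}]=p_n(\tau_n+p_n)$, the computation \eqref{eq:mixedmoment}, and $\tau_n+p_n\to\rho$), which are precisely $\tfrac1n$ times the corresponding entries of \eqref{eq:densecovariancematrix}, up to $o(1/n)$. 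Thus the $k$‑th second‑order contributions of the two chains differ by $o(1/n)\cdot O_w(1/n)=o_w(1/n^2)$, summing to $o_w(1)$ over the $\binom n2$ pairs. The third‑order remainder is handled by leaving one factor of the small entry unestimated: since the truncation at level $(np_n)^{1/4}$ forces the entries of $(np_n)^{-1/2}\widetilde X_n$ to be deterministically $O((np_n)^{-1/4})$ while their second moments are $O(1/n)$, each remainder has expectation $O_w\bigl((np_n)^{-1/4}\cdot\tfrac1n\cdot\tfrac1n\bigr)$, summing to $O_w((np_n)^{-1/4})\to0$ because $np_n\to\infty$; on the Gaussian side one uses instead $\E\bigl|(\tfrac1{\sqrt n}W_n)_{ij}\bigr|^3=O(n^{-3/2})$, which is smaller. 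Together these bounds give $\E\tfrac1{2n}\tr\mathbb G(Y^{(0)},w)-\E\tfrac1{2n}\tr\mathbb G(Y^{(m)},w)\to0$, hence the convergence of the expected resolvent traces, and the lemma follows.

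The step I expect to be the main obstacle is the moment bookkeeping just described: it is not enough that truncated and original entrywise moments agree to $o(1)$, since the telescoping sum runs over $\Theta(n^2)$ pairs and $p_n$ may tend to $0$; one needs the discrepancies to be $o(p_n)$ before the $(np_n)^{-1/2}$ normalization (equivalently $o(1/n)$ after it), and verifying that every error produced in the proof of Lemma \ref{lem:newmoments} is genuinely of the form $p_n\cdot o(1)$ --- while also keeping track of the mirrored cross‑moments, which is precisely why entries are swapped in pairs and why \eqref{eq:densecovariancematrix} carries the factor $\rho$ off the diagonal --- is the delicate part. The other point requiring care is the third‑order remainder, which is exactly where $np_n\to\infty$ is used, through the factor $(np_n)^{-1/4}$; dropping that hypothesis would break this estimate, consistent with the ``supersparse'' ($p_n=c/n$) regime being genuinely different, as conjectured in the introduction. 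Everything else --- the dilation identity, the concentration input from Lemma \ref{lem:concentrationofESM}, and the coupling of the two models on a common space --- is routine.
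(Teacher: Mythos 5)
Your proposal is correct and takes essentially the same approach as the paper: the same chain of reductions (Lemmas \ref{lem:zerodiagonal}, \ref{lem:truncation}, \ref{lem:concentrationofESM} together with a Stieltjes-transform criterion) followed by a Lindeberg swap of mirrored off-diagonal pairs, second-moment matching via Lemma \ref{lem:newmoments} and the covariance structure \eqref{eq:densecovariancematrix}, and a third-order remainder of size $O_\alpha\parens{(np_n)^{-1/4}n^{-2}}$ per pair, which is exactly where $np_n\to\infty$ enters. The only deviation is cosmetic --- you differentiate the resolvent of the Hermitian dilation, in which the entries appear linearly, while the paper differentiates $(A_z^*A_z-\alpha I_n)^{-1}$ directly via the resolvent identity, with the same $O_\alpha(1/n)$ derivative bounds --- and your observation that the errors in Lemma \ref{lem:newmoments} are genuinely of the form $p_n\cdot o(1)$ is precisely the refinement the paper's moment-matching step relies on.
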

	
	\begin{proof}
		This proof is inspired by and based on Lindeberg \cite{lindeberg} replacement and other proofs. For references of some of these, see Section 2.2.4 in \cite{taotrmt} or \cite{lindebergchatterjee,taolindeberg}. For this proof, $i$ will denote the complex number $\sqrt{-1}$ and not a summation index. We follow the argument as in \cite[Lemma 7.14]{ellipticlaw} and begin by defining for any $n\times n$ matrix $A$, the shift $(\cdot)_z$ and resolvent $R(\cdot)_{z,\alpha}$
		\[
			A_z := A-zI_n\quad\text{and}\quad
			R(A)_{z,\alpha}= (A_z^*A_z-\alpha I_n)^{-1},
		\]
		where $z\in\C$ and $\alpha\in\C$ with $\Imag(\alpha)>0$. Since we are fixing these values of $z$ and $\alpha$, we will suppress the dependence on them in the notation $R(A):=R(A)_{z,\alpha}$ First, by Lemma \ref{lem:truncation}, it suffices to show that for almost all $z\in\C$,
		\[
			\nu_{( (np_n)^{-1/2}\widetilde{X}_n)_z^*((np_n)^{-1/2}\widetilde{X}_n)_z}-\nu_{(n^{-1/2}W_n)_z^*(n^{-1/2}W_n)_z}\to0
		\] 
		weakly in probability. Here, $\widetilde{X}_n$ is the truncated, centered, and rescaled version of $X_n$ as defined before Lemma \ref{lem:truncation}. Thus, without loss of generality, we may assume that the non-Bernoulli variables in the entries of $X_n$ have mean zero, unit variance, and are almost surely bounded by $(np_n)^{1/4}$. In addition, we are still assuming that the diagonal entries of $X_n$ are 0 from Lemma \ref{lem:zerodiagonal}.
		By \cite[Theorem B.9]{BaiSilverstein}, it suffices to prove for almost all $z\in\C$ and all $\alpha\in\C$ with $\Imag(\alpha)>0$, that the following Stieltjes transforms satisfy
		\[
			\frac{1}{n}\tr R((np_n)^{-1/2}X_n)-\frac{1}{n}\tr R(n^{-1/2}W_n)\to 0
		\]
		in probability. Finally, by Lemma \ref{lem:concentrationofESM}, it is enough to show for almost all $z\in\C$ and all $\alpha\in\C$ with $\Imag(\alpha)>0$ that
		\begin{equation}\label{eq:expectedstieltjes}
			\E\frac{1}{n}\tr R((np_n)^{-1/2}X_n)-\E\frac{1}{n}\tr R(n^{-1/2}W_n)\to 0.
		\end{equation}
		To deal with these expressions in \eqref{eq:expectedstieltjes}, we define for indices $a,b\in \{1,\ldots,n\}$ with $a\neq b$ (given the diagonal entries of $X_n$ are taken to be 0), the function $s$ given by
		\begin{equation}\label{eq:sdef}
			s(x_1,x_2,x_3,x_4) = \frac{1}{n}\tr R(A+x_1E_{ab}+ix_2E_{ab}+x_3E_{ba}+ix_4E_{ba}).
		\end{equation}
		Here $A$ is any fixed matrix and $E_{ab}$ is the matrix unit which has a one in entry $(a,b)$ and zeros elsewhere. Note that $s$ here depends on $a$, $b$, $z$, $\alpha$, and $A$, but for ease of notation, we will suppress this dependence.
        
        We expand $s$ via a Taylor series centered at $(0,0,0,0)$:
        \begin{equation}\label{eq:taylorexpansion}
			s(x_1,x_2,x_3,x_4) = s(0,0,0,0)+\sum_{j=1}^4\frac{\partial s}{\partial x_j}(0,0,0,0)x_j +
			\frac{1}{2}\sum_{j,k=1}^4\frac{\partial^2 s}{\partial x_j\partial x_k}(0,0,0,0)x_jx_k + \eps,
		\end{equation} 
		where
        \begin{equation}\label{eq:epsilon}
            \abs{\eps}\leq CM(\abs{x_1}^3+\abs{x_2}^3+\abs{x_3}^3+\abs{x_4}^3).
        \end{equation} Here, $C$ is an absolute constant and $$M = \sup\limits_{1\leq j,k,l\leq 2}\sup\limits_{x_1,x_2,x_3,x_4\in\R}\abs{\frac{\partial^3 s}{\partial x_j\partial x_k\partial x_l}(x_1,x_2,x_3,x_4)}.$$
		To get our hands on formulas for partial derivatives of $s$, we compute the partial derivatives of the entries of our matrix $R(A)$. Using the resolvent identity\footnote{For any $A,B\in\cal{M}_n(\C)$ and $z\in\C$ that is not an eigenvalue of either $A$ or $B$, we have $(A-zI_n)^{-1}-(B-zI_n)^{-1} = (A-zI_n)^{-1}(B-A)(B-zI_n)^{-1}$.}, we have that for any $1\leq j,k,l,m\leq n$,
		\begin{align*}
			\frac{\partial (R(A))_{jk}}{\partial \Real(A_{lm})} &= -[(R(A)A_z^*)_{jl}(R(A))_{mk}+(R(A))_{jm}(A_zR(A))_{lk}],\\
            \frac{\partial (R(A))_{jk}}{\partial \Imag(A_{lm})} &= -i[(R(A)A_z^*)_{jl}(R(A))_{mk}-(R(A))_{jm}(A_zR(A))_{lk}].
		\end{align*}
		For the partial derivative of $s$ with respect to $x_1$, we sum up the diagonal elements to recover our trace:
        \begin{align*}
            \frac{\partial s}{\partial x_1} &= \frac{1}{n}\sum_{j=1}^n\frac{\partial}{\partial \Real(A_{ab})}(R(A))_{jj}\\&=
            \frac{-1}{n}\sum_{j=1}^n[(R(A)A_z^*)_{ja}(R(A))_{bj}+(R(A))_{jb}(A_zR(A))_{aj}]\\&=
            \frac{-1}{n}\parens{(R(A)R(A)A_z^*)_{ba}+(A_zR(A)R(A))_{ab}}
            \\&=
            \frac{-2}{n}\Real\parens{(A_zR(A)R(A))_{ab}}.
        \end{align*}
        Taking absolute values, we have
        \[
            \abs{\frac{\partial s}{\partial x_1}}\leq \frac{2}{n}\abs{(A_zR(A)R(A))_{ab}}\leq \frac{2}{n}\norm{A_zR(A)R(A)}\leq
            \frac{2}{n}\norm{\sqrt{A_z^*A_z}R(A)^2}.
        \]
        By the spectral theorem, this last quantity, the operator norm of a normal matrix, is equal to the spectral radius of said matrix. Therefore, by the spectral mapping theorem,
        \begin{align*}
        \norm{\sqrt{A_z^*A_z}R(A)^2} =& \sup_{t\geq 0}\frac{\sqrt{t}}{\abs{t-\alpha}^2}\\&\leq
        \frac{1}{\abs{\Imag(\alpha)}^2}+\sup_{t\geq 1}\frac{t}{\abs{t-\alpha}^2}\\&\leq
        \frac{1}{\abs{\Imag(\alpha)}^2}+\frac{1}{\abs{\Imag(\alpha)}}+\frac{\abs{\alpha}}{\abs{\Imag(\alpha)}^2}\\&=
        \frac{1+\abs{\Imag(\alpha)}+\abs{\alpha}}{\abs{\Imag(\alpha)}^2}.
        \end{align*}
        Finally, for the $x_1$ partial derivative, we have
		\[
			\frac{\partial s}{\partial x_1} = O_\alpha\parens{\frac{1}{n}}.
		\]
        Note that this bound in uniform for inputs $x_1,x_2,x_3,x_4\in\R$, $z\in\C$ and $A$. By a similar argument for the partial derivative in $x_2$, $x_3$, and $x_4$, we conclude that
            \[
			\frac{\partial s}{\partial x_j} = O_\alpha\parens{\frac{1}{n}}
		\]
		uniformly in $1\leq j\leq 4$, $x_1,x_2,x_3,x_4\in\R$, $z\in\C$, and for any matrix $A$. The computations for higher partial derivatives of $s$ follow with more applications of the resolvent identity; we have analogous uniform bounds on higher partial derivatives:
		\begin{equation}\label{eq:fderivatives}
			\frac{\partial^2 s}{\partial x_j\partial x_k} = O_\alpha\left(\frac{1}{n}\right),\quad
			\frac{\partial^3 s}{\partial x_j\partial x_k\partial x_l} = O_\alpha\left(\frac{1}{n}\right).
		\end{equation}
		Of course, the dependence on $\alpha$ appearing in each $O_{\alpha}(\cdot)$ is not uniform. We can now show that \eqref{eq:expectedstieltjes} holds by utilizing our bounds on $s$ and its partial derivatives and incorporating the $O(n^2)$ off-diagonal entries of our matrix---recall that the diagonal entries of our matrix are assumed to be zero---via the triangle inequality. That is, we verify
		\begin{align*}
			\E s\left(\frac{\delta_1\Real(\xi_1)}{\sqrt{np_n}},\frac{\delta_1\Imag(\xi_1)}{\sqrt{np_n}},\frac{\delta_2\Real(\xi_2)}{\sqrt{np_n}},\frac{\delta_2\Imag(\xi_2)}{\sqrt{np_n}}\right) &= \E s\parens{\frac{\Real(\psi_1)}{\sqrt{n}},\frac{\Imag(\psi_1)}{\sqrt{n}},\frac{\Real(\psi_2)}{\sqrt{n}},\frac{\Imag(\psi_2)}{\sqrt{n}}} \\&\quad+o_\alpha(n^{-2}),
		\end{align*}
		where we take $A$ to be any matrix independent of $\xi_1$, $\xi_2$, $\psi_1$ and $\psi_2$.
		
		To this end, let $A$ be any such matrix independent of $\xi_1$, $\xi_2$, $\psi_1$ and $\psi_2$. For convenience, we denote by $\delta_1$ and $\delta_2$ two Bernoulli($p_n$) variables independent of $\xi_1$ and $\xi_2$ with covariance $\cov(\delta_1,\delta_2) = \tau_{n}p_n$. Then with
        \[
            x_1 = \frac{\delta_1 \Real(\xi_1)}{\sqrt{np_n}},\quad
            x_2 = \frac{\delta_1 \Imag(\xi_1)}{\sqrt{np_n}},\quad
            x_3 = \frac{\delta_2 \Real(\xi_2)}{\sqrt{np_n}},\quad
            x_4 = \frac{\delta_2 \Imag(\xi_2)}{\sqrt{np_n}},
        \]
        we have that
		\begin{align}
			\nonumber \E s(x_1,x_2,x_3,x_4) &= \E s(0,0,0,0)+\sum_{j=1}^4\E \frac{\partial s}{\partial x_j}(0,0,0,0)\E x_j \\\nonumber&+
			\frac12\sum_{j,k=1}^4\E\frac{\partial^2 s}{\partial x_j\partial x_k}(0,0,0,0)\E x_jx_k + \E\eps
			\\\label{eq:sparsetaylorexpansion}&=
			\E s(0,0,0,0)+
			\frac12\sum_{j,k=1}^4\E\frac{\partial^2 s}{\partial x_j\partial x_k}(0,0,0,0)\E x_jx_k + \E\eps.
		\end{align}
		
		Recall that the $\eps$ here is coming from \eqref{eq:epsilon}. In \eqref{eq:sparsetaylorexpansion}, we used the mean 0 hypothesis of the $\xi_i$. Applying \eqref{eq:taylorexpansion}, \eqref{eq:fderivatives}, and our bound on the $\xi_i$ from the truncation step, we get that
		\begin{align*}
            \E\abs{\eps} &\leq CM(\E\abs{x_1}^3+\E\abs{x_2}^3+\E\abs{x_3}^3+\E\abs{x_4}^3)
            \\&\leq\frac{C_\alpha}{n}\parens{2(np_n)^{-3/2}p_n(\E\abs{\xi_1}^3+\E\abs{\xi_2}^3)}\\&\leq
            \frac{C_\alpha}{n}\parens{4(np_n)^{-3/2}p_n(np_n)^{1/4}}
            \\&=
            C_\alpha\cdot \frac{4p_n(np_n)^{1/4}}{n(np_n)^{3/2}}
            \\&=
            O_\alpha\parens{\frac{(np_n)^{1/4}}{n^2(np_n)^{1/2}}} = o_\alpha\left(\frac{1}{n^2}\right).    
		\end{align*}
		Similarly expanding with
        \[
            y_1 = \frac{\Real(\psi_1)}{\sqrt{n}},\quad
            y_2 = \frac{\Imag(\psi_1)}{\sqrt{n}},\quad
            y_3 = \frac{\Real(\psi_2)}{\sqrt{n}},\quad
            y_4 = \frac{\Imag(\psi_2)}{\sqrt{n}},\quad
        \]
        we have that
		\begin{align}
			\nonumber\E s(y_1,y_2,y_3,y_4) &= \E s(0,0,0,0)+\sum_{j=1}^4\E \frac{\partial s}{\partial y_j}(0,0,0,0)\E y_j \\&+
			\frac12\sum_{j,k=1}^4\E\frac{\partial^2 s}{\partial y_j\partial y_k}(0,0,0,0)\E y_jy_k + \E\eps
			\\\label{eq:densetaylorexpansion}&=
			\E s(0,0,0,0)+\frac12\sum_{j,k=1}^4\E\frac{\partial^2s}{\partial y_j\partial y_k}(0,0,0,0)\E y_jy_k+\E\eps.
		\end{align}
		Again, we used that $\E \psi_1=\E \psi_2 =0$. Given that our Gaussian variables $\psi_1$ and $\psi_2$ have finite third moment, we have similar bounds
		as above:
		\[
			\E\abs{\eps} = O_\alpha\parens{\frac{1}{n^{2.5}}} = o_\alpha\left(\frac{1}{n^2}\right).
		\]
		Almost everything in the above Taylor polynomial expansions will cancel upon subtraction  or have the desired $o_{\alpha}(n^{-2})$ control (in the $\eps$ error terms). Thus, we need to examine the second order terms more closely. Throughout this part, we will be using Lemma \ref{lem:newmoments} to talk about the mixed moments of the real and imaginary parts of the entries of our truncated matrix. First, we compute the covariance matrix for the real vector $\xi:=(x_1,x_2,x_3,x_4)^T$:
        \[
        \E\xi\xi^T = \frac{1}{n}\begin{pmatrix}
            c_{11} & c_{12} & \rho_{n}c_{13} & \rho_{n}c_{14}\\
            c_{12} & c_{22} & \rho_{n}c_{23} & \rho_{n}c_{24}\\
            \rho_{n}c_{13} & \rho_{n}c_{14} & c_{33} & c_{34}\\
            \rho_{n}c_{14} & \rho_{n}c_{24} & c_{34} & c_{44}\\
        \end{pmatrix},
        \]
        where $\rho_{n} = \tau_n+p_n$ so that $\lim_{n\to\infty}\rho_n=\tau+p=\rho$.
        
        Given the matrix entries of \eqref{eq:densecovariancematrix}, it follows that $\E[x_jx_k]=\E[y_jy_k]+o(n^{-1})$ uniformly for $1\leq j,k\leq 4$. Combining this with our second derivative bound \eqref{eq:fderivatives}, we have that
    \[
    \abs{\sum_{j,k=1}^4\E\frac{\partial^2s}{\partial x_j\partial x_k}(0,0,0,0)\E x_jx_k-\sum_{j,k=1}^4\E\frac{\partial^2s}{\partial x_j\partial x_k}(0,0,0,0)\E y_jy_k} = o_\alpha\parens{\frac{1}{n^2}}.
    \]
         
		Therefore, subtracting \eqref{eq:sparsetaylorexpansion} from \eqref{eq:densetaylorexpansion}, we have that
		\[
			\E s(x_1,x_2,x_3,x_4) = \E s(y_1,y_2,y_3,y_4)+o_\alpha\parens{\frac{1}{n^2}}.
		\]
		After invoking the triangle inequality and letting $a$ and $b$ in \eqref{eq:sdef} range over all $O(n^2)$ entries of our matrices, we have
		\[
			\E \frac{1}{n}\tr R((np_n)^{-1/2}X_n)_{z,\alpha}=
			\E \frac{1}{n}\tr R(n^{-1/2}W_n)_{z,\alpha} +o_\alpha(1).
		\]
        for all $z\in\C$ and all $\alpha\in \C$ with $\Imag(\alpha)\neq0$.
	\end{proof}

    \subsection{Proof of Proposition \ref{prop:weakconvergence}}

    The preceding Lemma \ref{lem:comparison} is not enough on its own to prove Proposition \ref{prop:weakconvergence}. We need the following lemma to confirm the weak in probability convergence of the empirical singular value measures $\nu_{n^{-1/2}W_n-zI_n}$ appearing in Lemma \ref{lem:comparison}. The proof of this lemma can be found in Appendix \ref{appendix}.
    \begin{lem}\label{lem:determinetheellipticlaw}
        Let $(W_n)_{n\geq1}$ be a dense elliptic random matrix coming from Definition \ref{def:ellipticmodel} with complex-valued atom variable $(\psi_1,\psi_2)$ comprised of jointly Gaussian variables with mean zero and having the following covariance structure:
        \begin{equation}\label{eq:psuedocovariancestructure}
    \E[\psi\psi^T] = \begin{pmatrix}
        \E[\psi_1^2] & \E[\psi_1\psi_2]\\
        \E[\psi_2\psi_1] & \E[\psi_2^2]
    \end{pmatrix}
    =:
    \begin{pmatrix}
        \mu_{11} & \rho\\
        \rho & \mu_{22}
    \end{pmatrix}
\end{equation}
and
\begin{equation}\label{eq:covariancestructure}
    \E[\psi\psi^*] = \begin{pmatrix}
        \E[\abs{\psi_1}^2] & \E[\psi_1\overline{\psi_2}] \\
        \E[\overline{\psi_1}\psi_2] & \E[\abs{\psi_2}^2]
    \end{pmatrix}
    =:
    \begin{pmatrix}
        1 &  \mu_{12}\\
        \mu_{21} & 1
    \end{pmatrix},
\end{equation}
        for $\rho\in(-1,1)$ and any $\mu_{11},\mu_{12},\mu_{21},\mu_{22}\in\C$ with $\abs{\mu_{ij}}\leq1$ for all $1\leq i,j\leq 2$. Then for almost all $z\in\C$,
        \[
            \nu_{n^{-1/2}W_n-zI_n}\to\nu_z
        \]
        weakly in probability as $n\to\infty$. Furthermore, the family $(\nu_z)_{z\in\C}$ determines the elliptic law with parameter $\rho$:
        \[
            U_{\mu_\rho}(z) = \int_0^\infty\log(s)\,d\nu_z(s)
        \]
        for almost all $z\in\C$. Here, $\mu_\rho$ is the probability measure on $\C$ having density $\frac{1}{\pi(1-\rho^2)}\mathbbm{1}_{E_\rho}$ with respect to Lebesgue measure on $\C$. (The ellipsoid $E_\rho$ is defined in \eqref{eq:ellipse}.) 
    \end{lem}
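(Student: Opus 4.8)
The plan is to pass to the Hermitization. Set $B_{n,z}=n^{-1/2}W_n-zI_n$ and consider the $2n\times 2n$ Hermitian matrix
\[
\cal{H}_{n,z}=\begin{pmatrix}0 & B_{n,z}\\ B_{n,z}^* & 0\end{pmatrix},
\]
whose eigenvalues are $\pm\sigma_1(B_{n,z}),\dots,\pm\sigma_n(B_{n,z})$, so that the symmetrization of $\nu_{n^{-1/2}W_n-zI_n}$ is the empirical spectral measure $\mu_{\cal{H}_{n,z}}$. It then suffices, for each fixed $\alpha\in\C$ with $\Imag(\alpha)>0$, to show that $\frac{1}{2n}\tr(\cal{H}_{n,z}-\alpha I_{2n})^{-1}$ converges in probability to a deterministic limit $g_z(\alpha)$ which depends on the atom $(\psi_1,\psi_2)$ only through $\rho=\E[\psi_1\psi_2]$. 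Concentration of $\frac{1}{2n}\tr(\cal{H}_{n,z}-\alpha I_{2n})^{-1}$ around its mean follows from a martingale-difference/Burkholder estimate exactly as in Lemma \ref{lem:concentrationofESM} (or directly from Gaussian concentration, since $W_n$ is Gaussian), so the real content is the identification of the limiting mean.

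The crucial observation is that the limiting spectrum of $\cal{H}_{n,z}$ does not see the ``pseudo-covariances'' $\mu_{11}=\E[\psi_1^2]$, $\mu_{22}=\E[\psi_2^2]$, $\mu_{12}=\E[\psi_1\overline{\psi_2}]$, but only the data $\E|\psi_1|^2=\E|\psi_2|^2=1$ and $\E[\psi_1\psi_2]=\rho$. Heuristically this is already visible from the fact that conjugating $W_n$ by a diagonal unitary matrix leaves the singular values of $n^{-1/2}W_n-zI_n$ unchanged while multiplying $\E[W_{ij}^2]$ and $\E[W_{ij}\overline{W_{ji}}]$ by oscillating phases and leaving $\E|W_{ij}|^2$ and $\E[W_{ij}W_{ji}]$ intact. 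To turn this into a proof I would derive the self-consistent (matrix Dyson) equation for $\frac{1}{2n}\E\tr(\cal{H}_{n,z}-\alpha I_{2n})^{-1}$ by Gaussian integration by parts applied to the entries of $W_n$, exploiting the block structure of the resolvent $R=(\cal{H}_{n,z}-\alpha I_{2n})^{-1}$: its diagonal $n\times n$ blocks are close to scalar multiples of $I_n$, its off-diagonal $n\times n$ blocks are close to scalar multiples of $I_n$ as well, and in the integration-by-parts the terms carrying $\mu_{11},\mu_{22},\mu_{12}$ are always paired with genuinely off-diagonal resolvent entries---exactly as $\E[H_{ij}^2]$ is irrelevant for the bulk of a complex Wigner matrix---and so vanish in the limit. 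What survives is a closed system for the limiting block-traces depending only on $z$, $\alpha$ and $\rho$, which is precisely the system governing the real Gaussian elliptic matrix with correlation $\rho$; by uniqueness of its solution on $\set{\Imag(\alpha)>0}$ one recovers $g_z(\alpha)$, and inverting the Stieltjes transform yields $\nu_{n^{-1/2}W_n-zI_n}\to\nu_z$ weakly in probability for almost every $z$.

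Finally one must identify $\nu_z$ with the family appearing in the elliptic law and check that $U_{\mu_\rho}(z)=-\int_0^\infty\log(s)\,d\nu_z(s)$ for almost every $z$. By the previous step $\nu_z$ coincides with the limiting singular value distribution of $n^{-1/2}W_n'-zI_n$, where $W_n'$ is the real Gaussian elliptic matrix with correlation $\rho\in(-1,1)$. Since $\mu_{n^{-1/2}W_n'}\to\mu_\rho$ by Theorem \ref{thm:ellipticlaw}, and since the logarithmic-potential identity $U_{\mu_\rho}(z)=-\int_0^\infty\log(s)\,d\nu_z(s)$ is exactly what the Hermitization argument establishes in the course of proving that theorem (see \cite{naumov,ellipticlaw}), the identity passes to our $\nu_z$ unchanged; alternatively one computes $-\int_0^\infty\log(s)\,d\nu_z(s)$ directly from the explicit solution of the self-consistent equation and matches it with $-\int_\C\log|w-z|\,d\mu_\rho(w)$ by comparing distributional Laplacians and behavior at infinity. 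I expect the main obstacle to be the rigorous elimination of $\mu_{11},\mu_{22},\mu_{12}$: because of the $-zI_n$ shift the off-diagonal blocks of $R$ are not negligible, so one must show, with error control uniform enough to survive summation over the $O(n^2)$ entries, that the pseudo-covariance contributions only ever multiply off-diagonal \emph{entries} of those blocks, which calls for a priori bounds on $\norm{R}$ and on individual off-diagonal resolvent entries together with the third-order Taylor bookkeeping already developed for Lemma \ref{lem:comparison}.
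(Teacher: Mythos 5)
Your plan is sound and, at the level of strategy, coincides with the paper's: Hermitize, prove concentration of the Stieltjes transform, derive a self-consistent equation whose solution depends only on $\rho$, invoke uniqueness of that solution, and then transfer the logarithmic-potential identity from a known special case via Lemma \ref{lem:hermitization}(jj). The genuine differences are technical. Where you propose Gaussian integration by parts to obtain a matrix Dyson equation, the paper derives the self-consistent system \eqref{eq:selfconsistenteq1} by a Schur-complement (leave-one-out) argument: the conditional expectation $\E_n[Q^*\widetilde{R}Q]$ is computed exactly and involves only the variances and $\rho$, so the pseudo-covariances $\mu_{11},\mu_{12},\mu_{21},\mu_{22}$ never enter the leading term at all; they surface only in the Wick-formula variance estimate for $Q^*\widetilde{R}Q-\E_n[Q^*\widetilde{R}Q]$, where they multiply sums of the form $\frac{1}{n^2}\sum_{i,j}\widetilde{R}^{22}_{ij}\overline{\widetilde{R}^{22}_{ji}}$, bounded by $\frac{1}{n}\norm{\widetilde R}^2=O_\eta(n^{-1})$. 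This is exactly the vanishing mechanism you describe (the complex-Wigner $\E[H_{ij}^2]$ analogy), but the paper's route needs only averaged block traces, the trivial resolvent bound $\norm{\widetilde R}\leq \Imag(\eta)^{-1}$, and Hilbert--Schmidt estimates; no entrywise or local-law-type control of off-diagonal resolvent entries is required, so the obstacle you flag (error control over $O(n^2)$ entries, a priori bounds on individual off-diagonal entries) simply does not arise in that formulation --- if you pursue the cumulant route you should replace the heuristic ``blocks are close to scalar multiples of $I_n$'' by the same Cauchy--Schwarz/operator-norm bounds, which suffice. Your final anchoring step also differs slightly but is legitimate: you compare with the real Gaussian elliptic ensemble (which satisfies the lemma's hypotheses with $\mu_{11}=\mu_{22}=1$, $\mu_{12}=\mu_{21}=\rho$) and quote Theorem \ref{thm:ellipticlaw} together with the uniform integrability established in \cite{naumov,ellipticlaw}, whereas the paper sets all $\mu_{ij}=0$ so that $(\psi_1,\psi_2)$ lies in the $(\tfrac12,\rho)$-family and quotes Lemma 7.17 of \cite{ellipticlaw}; both choices exploit the same observation that the limit is $\mu_{ij}$-independent. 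Note finally that the identity should carry the minus sign, $U_{\mu_\rho}(z)=-\int_0^\infty\log(s)\,d\nu_z(s)$, as in \eqref{eq:determinestheellipticlaw} (the sign in the lemma's displayed statement is a typo), which is the form you correctly use.
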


    \begin{remark}
        The only second moment assumptions we make on the atom variable $(\psi_1,\psi_2)$ appearing in Lemma \ref{lem:determinetheellipticlaw} is that they have variance 1 (see \eqref{eq:covariancestructure}) and satisfy $\E[\psi_1\psi_2]=\rho$ (see \eqref{eq:psuedocovariancestructure}).
    Although similar to Lemma 7.17 in \cite{ellipticlaw}, this Lemma \ref{lem:determinetheellipticlaw} does not demand the specialized covariance structure imposed on the atom variables in the main results of \cite{ellipticlaw}.
    \end{remark}
    
	\begin{proof}[Proof of Proposition \ref{prop:weakconvergence}]
		For our matrix model given by $M_n=(np_n)^{-1/2}(X_n+F_n)$, we have that for all $z\in\C$,
        \[
            \nu_{M_n-zI_n}-\nu_{Y_{n,z}}\to 0
        \]
        weakly almost surely. This follows because $\rank(F_n)=o(n)$. For a reference of this result, see \cite[Theorem A.44]{BaiSilverstein}. From Lemma \ref{lem:comparison} and Lemma \ref{lem:determinetheellipticlaw}, we have that for our matrix $Y_{n,z}=(np_n)^{-1/2}X_n-zI_n$, 
		\[
			\nu_{Y_{n,z}} \to \nu_z
		\]
		weakly in probability as $n\to\infty$ for almost all $z\in\C$ and thus
        \[
        \nu_{M_n-zI_n} \to \nu_z
        \]
        weakly in probability as $n\to\infty$ for almost all $z\in\C$, where $(\nu_z)_{z\in\C}$ satisfies \eqref{eq:determinestheellipticlaw} for almost all $z\in\C$. 
	\end{proof}

\section{A bound on the least singular value}\label{sec:leastsingularvalue}

In the previous section, we proved condition \textit{(i)} in the hypotheses of Lemma \ref{lem:hermitization} for our matrix $M_n = (np_n)^{-1/2}(X_n+F_n)$. Before tackling \textit{(ii)}, the uniform integrability of the logarithm, we prove a proposition about the least singular value of our sparse elliptic random matrix model. Specifically, we will be applying the following result from \cite[Theorem 5.1]{andrewelliptic}:

\begin{thm}\label{thm:andrewsmallsingval}
	Let $X = (X_{ij})$ be an $n \times n$ complex-valued random matrix such that
	\begin{enumerate}[label=(\roman*)]
		\item (off-diagonal entries) $\{ (X_{ij}, X_{ji}) : 1 \leq i < j \leq n\}$ is a collection of independent random tuples,
		\item (diagonal entries) the diagonal entries $\{ X_{ii} : 1 \leq i \leq n\}$ are independent of the off-diagonal entries (but can be dependent on each other),
		\item there exists $a > 0$ such that
		the events 
		\begin{equation}
			\mathcal{E}_{ij} := \{ |X_{ij}| \leq a, |X_{ji}| \leq a \} 
		\end{equation}
		defined for $i \neq j$ satisfy 
		\[ b := \min_{i < j} \P(\mathcal{E}_{ij}) > 0, \quad \sigma^2 := \min_{i \neq j} \var(X_{ij} \mid \mathcal{E}_{ij}) > 0, \]
		and 
		\[ \rho := \max_{i < j} \left| { \corr( X_{ij} \mid \mathcal{E}_{ij}, X_{ji} \mid \mathcal{E}_{ji}) } \right| < 1. \]
	\end{enumerate}
	Then there exists $C = C(a, b, \sigma) > 0$ such that for $n \geq C$, any $A\in\cal{M}_n(\C)$, $s \geq 1$, and $0 < t \leq 1$, 
	\[ \P \left( \sigma_n(X + A) \leq \frac{t}{\sqrt{n}}, \sigma_1(X+A) \leq s \right) \leq C \left( \frac{\log (Cns)}{\sqrt{1-\rho}} \left( \sqrt{s^5 t} + \frac{1}{\sqrt{n}} \right) \right)^{1/4}. \]
\end{thm}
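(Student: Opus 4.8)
The plan is to adapt the Rudelson--Vershynin geometric scheme for lower bounds on the smallest singular value to the pairwise--correlated ``mirror'' structure of $X$. Write $M=X+A$ and recall $\sigma_n(M)=\min_{\norm{v}=1}\norm{Mv}$. First I would pass to a conditioned model: conditioning on the intersection of the events $\cal{E}_{ij}$ over $i<j$ costs at most a factor $b^{-O(1)}$ (absorbed into $C$) and leaves a matrix whose off--diagonal entries are bounded by $a$, have conditional variance at least $\sigma^2$, and have pairwise correlation at most $\rho<1$; one simultaneously restricts to the event $\set{\sigma_1(M)\le s}$ already present in the statement. Then decompose the unit sphere of $\C^n$ into \emph{compressible} vectors (those within distance $1/2$ of a vector supported on at most $\kappa n$ coordinates, for a small parameter $\kappa$ to be chosen) and \emph{incompressible} vectors, and bound $\P(\,\exists\,v:\norm{Mv}\le tn^{-1/2}\,)$ on the two pieces separately.

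\emph{Compressible vectors.} For a \emph{fixed} unit vector $v$ one has $\norm{Mv}^2=\sum_{i=1}^n\abs{e_i^*Mv}^2$; although rows $i$ and $j$ of $X$ are coupled through the single pair $(X_{ij},X_{ji})$, each $e_i^*Xv$ is a sum of $n$ terms that are independent given the conditioning and have variances bounded below, so a Paley--Zygmund/tensorization argument gives $\P(\norm{Mv}\le c\sqrt n)\le e^{-cn}$ --- the correlation graph being a single perfect matching contributes only a lower--order correction. Covering the compressible vectors by an $\varepsilon$--net with $\varepsilon\asymp 1/s$ (so that $\sigma_1(M)\le s$ controls the approximation error) produces a net of size $(Cs)^{\kappa n}$ up to a binomial factor, and a union bound shows that $\P(\exists\text{ compressible }v:\norm{Mv}\le cn^{-1/2},\ \sigma_1(M)\le s)$ is exponentially small, provided $\kappa$ is taken of order $1/\log(Cns)$ --- this constraint on $\kappa$ is the source of the logarithm in the final bound.

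\emph{Incompressible vectors.} Here I would use the ``invertibility via distance'' reduction: for each $i$, $\norm{Mv}\ge\abs{v_i}\,\dist(C_i,H_i)$, where $C_i$ is the $i$th column of $M$ and $H_i=\operatorname{span}\set{C_j:j\neq i}$; incompressibility of $v$ forces $\abs{v_i}\gtrsim n^{-1/2}$ for a positive fraction of $i$, so it suffices to bound $\sup_i\P(\dist(C_i,H_i)\le Ct)$. In the i.i.d.\ case $C_i$ is independent of $H_i$, but in the elliptic model it is not, since $H_i$ depends on the row--$i$ entries $\set{X_{ik}}_{k\neq i}$, which are correlated with the column--$i$ entries $\set{X_{ki}}_{k\neq i}$. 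The key point is that $H_i$, and hence any measurable choice of unit normal $v^{(i)}$ to it, depends only on the entries of $X$ \emph{outside the off--diagonal part of column $i$}; conditioning on that $\sigma$--field, the variables $\set{X_{ki}}_{k\neq i}$ are still mutually independent (each conditioned only on its own, now fixed, mirror $X_{ik}$), and the hypotheses $\var(X_{ij}\mid\cal{E}_{ij})\ge\sigma^2$ and $\rho<1$ ensure each conditional law has Lévy concentration function at unit scale $O((1-\rho)^{-1/2})$. Since $\dist(C_i,H_i)=\abs{\langle C_i,v^{(i)}\rangle}=\abs{\sum_{k\neq i}X_{ki}v^{(i)}_k+(\text{fixed})}$ with $v^{(i)}$ still essentially incompressible after deleting its $i$th coordinate, an Esseen/Erd\H{o}s--Littlewood--Offord estimate yields a small--ball bound of the shape $O\bigl((1-\rho)^{-1/2}(s^{O(1)}\sqrt t+n^{-1/2})\bigr)$, which assembled over $i$ reproduces the $(1-\rho)^{-1/2}(\sqrt{s^5t}+n^{-1/2})$ factor of the target (the crude power of $s$ and the overall fourth root reflecting a single--scale net and Esseen's inequality rather than sharper least--common--denominator arguments).

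The main obstacle is the one that is already delicate in the i.i.d.\ theory: the unit normal $v^{(i)}$ to $H_i$ need not be incompressible, in which case the last Littlewood--Offord step fails. I would handle this exactly as in the Rudelson--Vershynin argument: the event that \emph{some} $H_i$ admits a compressible near--normal forces $M^*$ (or a large principal minor of it) to possess a compressible approximate null vector, an event already controlled at exponentially small probability --- given $\sigma_1(M)\le s$ --- by the compressible analysis above. Assembling the compressible bound, the incompressible--with--incompressible--normal bound, and this ``bad normal'' bound, undoing the initial conditioning (which inserts only the harmless $b$--dependence into $C$), and bookkeeping the scales gives the stated inequality for all $n\ge C(a,b,\sigma)$.
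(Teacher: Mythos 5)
You should first note that the paper does not prove Theorem \ref{thm:andrewsmallsingval} at all: it is imported as a black box from \cite[Theorem 5.1]{andrewelliptic} and only applied in Section \ref{sec:leastsingularvalue}, so the question is whether your sketch would constitute a proof of the quoted result. It would not, because the step where the elliptic structure actually bites contains a genuine gap. In the incompressible regime you treat the dependence between the column $C_i$ and the hyperplane $H_i$ by conditioning on everything outside the off-diagonal part of column $i$ (in particular on the mirror entries $X_{ik}$) and then asserting that each $X_{ki}$, conditioned on its now-fixed mirror, retains anti-concentration at a scale governed by $(1-\rho)^{-1/2}$. The hypotheses give no such thing: $\rho$ bounds only the correlation of the truncated pair, i.e.\ its second-moment structure, and says nothing about the conditional law of $X_{ki}$ given $X_{ik}$. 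For example, take $(X_{ik},X_{ki})=(\xi,\,2\xi^2-1)$ with $\xi$ uniform on $[-1,1]$ and $a=1$: then $b=1$, both conditional variances are positive, and $\corr(X_{ik},X_{ki})=0$, so the theorem applies, yet conditionally on $X_{ik}$ the entry $X_{ki}$ is a point mass and your Littlewood--Offord step for $\dist(C_i,H_i)$ produces nothing. Any correct proof must exploit the joint randomness of the pairs $(X_{ik},X_{ki})$ entering the distance simultaneously through the row and the normal vector---e.g.\ via a bilinear/quadratic-form anti-concentration estimate with decoupling, as in the symmetric- and elliptic-matrix least-singular-value literature---rather than freezing one member of each pair; the $(1-\rho)^{-1/2}$ in the quoted bound reflects non-degeneracy of the pair's two-dimensional covariance, not a conditional variance.

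A second, more repairable, flaw is the opening reduction: conditioning on $\bigcap_{i<j}\cal{E}_{ij}$ costs a factor $\P\bigl(\bigcap_{i<j}\cal{E}_{ij}\bigr)^{-1}$, which can be as large as $b^{-\binom{n}{2}}$, not $b^{-O(1)}$; for fixed $b<1$ this intersection has probability decaying superexponentially and nothing can be absorbed into $C(a,b,\sigma)$. The standard remedy is local rather than global: one shows that with probability $1-e^{-cn}$ a proportion at least $b/2$ of the events $\cal{E}_{ki}$ relevant to a given row or column hold, and runs the small-ball estimates only over those good indices (or conditions only on the $O(n)$ pairs touching the row/column at hand). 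Your compressible/incompressible architecture, the invertibility-via-distance reduction, and the choice $\kappa\asymp 1/\log(Cns)$ are all reasonable and consistent with how bounds of this shape are obtained, but with the conditional-law claim unfounded and the global truncation step invalid as stated, the proposal does not prove the theorem.
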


We will be applying this result to prove the main result of this section:

\begin{prop}[Least singular value]\label{prop:leastsingularvalue}
	Let $(X_n)_{n\geq 1}$ be a sparse elliptic random matrix model coming from Definition \ref{def:sparseellipticmodel} and satisfying the hypotheses of Theorem \ref{thm:auxiliarymaintheorem}. Furthermore, let $(F_n)_{n\geq 1}$ be a sequence of deterministic matrices over $\C$ that satisfy $\rank(F_n) = o(n)$ and \eqref{eq:boundedhilbertschmidt}. Then there exists an $r>0$ such that for any $z\in\C$,\[
		\P\parens{\sigma_n(M_n-zI_n)\leq n^{-r}}=o(1).
	\]
\end{prop}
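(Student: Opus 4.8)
The plan is to deduce the bound from Theorem~\ref{thm:andrewsmallsingval} applied to $X = X_n$. First I would absorb the normalization and perturbation into the deterministic part: write
\[
    M_n - zI_n = (np_n)^{-1/2}\parens{X_n + A_n}, \qquad A_n := F_n - z\sqrt{np_n}\, I_n,
\]
so that $\sigma_n(M_n - zI_n) = (np_n)^{-1/2}\sigma_n(X_n + A_n)$ and the claim is equivalent to $\P(\sigma_n(X_n + A_n) \le \sqrt{np_n}\, n^{-r}) = o(1)$. After the usual truncation reductions (the off-diagonal entries of $X_n$ truncated as in Lemma~\ref{lem:truncation}, and the diagonal entries via a union bound, since with probability $1-o(1)$ no present diagonal entry exceeds a suitably chosen level) we may assume all entries of $X_n$ are bounded by a fixed power of $n$. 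The hypotheses of Theorem~\ref{thm:andrewsmallsingval} are then straightforward to verify: (i) and (ii) hold by Definition~\ref{def:sparseellipticmodel}; for (iii), fix a constant $a$ with $\P(\abs{\xi_1}\le a)>0$ and $\P(\abs{\xi_2}\le a)>0$, note $\mathcal{E}_{ij}\supseteq\{\abs{x_{ij}}\le a,\ \abs{x_{ji}}\le a\}$ so that $b=\min_{i<j}\P(\mathcal{E}_{ij})$ is bounded below uniformly in $n$; conditionally on $\mathcal{E}_{ij}$ the entry $(X_n)_{ij}$ equals $0$ with positive probability and is non-constant with positive probability (using $\var(\xi_\ell)=1$), hence $\var((X_n)_{ij}\mid\mathcal{E}_{ij})>0$; and the conditional correlation of $((X_n)_{ij},(X_n)_{ji})$ stays bounded away from $1$—uniformly in $n$, since the relevant parameters $p_n,\tau_n,\rho_{2,n}$ converge and $\abs{\E[\xi_1\xi_2]}=\rho_1<1$ prevents $x_{ij}$ and $x_{ji}$ from being perfectly correlated.

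The place where the sparse structure genuinely bites—and the main obstacle—is that $\sigma_n^2 := \min_{i\ne j}\var((X_n)_{ij}\mid\mathcal{E}_{ij})$ is of order $p_n$, which degenerates to $0$ when $p=0$, so the constant $C=C(a,b,\sigma_n)$ coming from Theorem~\ref{thm:andrewsmallsingval} is \emph{not} bounded in $n$. This is exactly where the hypothesis $np_n\to\infty$ is used: it forces $p_n\ge n^{-1}$ for all large $n$, whence $\sigma_n^{-2}=O(1/p_n)=O(n)$, and tracking the polynomial dependence of the constant in \cite{andrewelliptic} on $\sigma$ shows that $C(a,b,\sigma_n)$ is bounded by a fixed power of $n$. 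Since we only need the crude conclusion $\sigma_n(M_n-zI_n)\ge n^{-r}$ for \emph{some} $r>0$, such a polynomial loss is harmless—it will be absorbed by choosing $r$ large.

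To carry this out, I would bound the operator norm crudely: after the truncation all entries of $X_n$ are bounded by a fixed power of $n$, so $\sigma_1(X_n)\le\norm{X_n}_{HS}$ is at most a fixed power of $n$ deterministically, and by \eqref{eq:boundedhilbertschmidt}, $\norm{A_n}\le\norm{F_n}_{HS}+\abs{z}\sqrt{np_n}=O_z(n)$; hence $\sigma_1(X_n+A_n)\le s$ holds with probability $1-o(1)$ for some $s$ a fixed power of $n$. Then I would invoke Theorem~\ref{thm:andrewsmallsingval} with this $s$, with $A=A_n$, and with $t=t_n:=n^{1-r}\sqrt{p_n}$ (which lies in $(0,1]$ once $r\ge1$), obtaining
\[
    \P\parens{\sigma_n(X_n + A_n) \le \frac{t_n}{\sqrt n}} \le C_n\parens{\frac{\log(C_n n s)}{\sqrt{1-\rho}}\parens{\sqrt{s^5 t_n} + \frac{1}{\sqrt n}}}^{1/4} + o(1),
\]
where $\rho<1$ is bounded away from $1$, the quantities $C_n$ and $s$ are at most polynomial in $n$, and $\sqrt{s^5 t_n}$ carries a negative power of $n$ that, for $r$ chosen sufficiently large, overpowers those polynomial factors and the $\log$ term; so the right-hand side is $o(1)$. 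Since $\sigma_n(X_n+A_n)\le t_n/\sqrt n$ is exactly $\sigma_n(M_n-zI_n)\le n^{-r}$, and every estimate above is uniform in $z\in\C$, this establishes the proposition. The crux, to reiterate, is the degeneracy $\sigma_n\asymp\sqrt{p_n}\to0$ of the conditional entry-variance in the sparse regime, which is defused only because $np_n\to\infty$ keeps $1/p_n$ polynomially bounded in $n$.
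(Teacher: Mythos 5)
Your reduction to Theorem \ref{thm:andrewsmallsingval} is the right idea, and your verification of (i), (ii), of the lower bound on $b$, and of the conditional correlation staying away from $1$ all go through. But there is a genuine gap exactly at the point you flag as "the crux." You apply the theorem to the unnormalized matrix $X_n$, whose conditional entry-variance satisfies $\sigma^2\asymp p_n$, and you rescue this by asserting that the constant $C=C(a,b,\sigma)$ of \cite{andrewelliptic} depends only polynomially on $\sigma^{-1}$, so that $C_n$ is a fixed power of $n$, and that "such a polynomial loss is harmless---it will be absorbed by choosing $r$ large." Neither half of this is justified. The statement of Theorem \ref{thm:andrewsmallsingval} gives no quantitative dependence of $C$ on $\sigma$, so the polynomial-growth claim is an unverified assertion about the internals of the cited proof. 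And even granting it, choosing $r$ large only shrinks the $\sqrt{s^5t}$ term; the bound also contains the term
\[
C_n\parens{\frac{\log(C_n n s)}{\sqrt{1-\rho}}\cdot\frac{1}{\sqrt n}}^{1/4}\asymp C_n\, n^{-1/8}(\log n)^{1/4},
\]
which is $o(1)$ only if $C_n=o(n^{1/8}/\mathrm{polylog})$, i.e.\ only if the dependence on $\sigma$ is milder than $\sigma^{-1/4}$ --- something you cannot assume and cannot force by adjusting $r$. In addition, the theorem requires $n\geq C$, which also becomes problematic once $C=C_n$ grows with $n$. (A smaller point: "truncating as in Lemma \ref{lem:truncation}" modifies entries on an event of non-negligible probability and so is not legitimate for the least singular value; what works is a union bound showing no entry exceeds, say, $n^{3/2}$, or simply a Markov bound on the Hilbert--Schmidt norm for the $\sigma_1$ control.)

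The degeneracy is avoided, rather than absorbed, by applying the theorem at the right scale: write $M_n-zI_n=n^{-1/2}\bigl(Z_n+\sqrt{n}A\bigr)$ with $Z_n=p_n^{-1/2}X_n$ and $A=(np_n)^{-1/2}F_n-zI_n$. The entries of $Z_n$ have variance exactly $1$, so for $a$ large (independent of $n$) one gets, via Chebyshev and dominated convergence, $b\geq 1-2/a^2>0$, conditional variances in $(\tfrac12,\tfrac32)$, and conditional correlation at most $\tfrac12(1+\abs{\rho_1}\abs{\tau_n+p_n})<1$; hence the constant $C(a,b,\sigma)$ is genuinely uniform in $n$ and the hypothesis $np_n\to\infty$ is not needed to control it at all (it enters elsewhere in the paper, not here). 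One then combines the event $\set{\sigma_1(Z_n+\sqrt{n}A)\leq n^{2}}$, which holds with probability $1-o(1)$ by a Markov/Hilbert--Schmidt bound, with the theorem applied at $s=n^2$, $t=n^{1-r}$, and any $r>11$ makes $\sqrt{s^5t}\to0$, giving the stated $o(1)$ bound. As written, your argument does not close without importing an explicit, sufficiently strong quantitative form of the cited theorem, which the paper neither states nor needs.
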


Recall the definition of $M_n=(np_n)^{-1/2}(X_n+F_n)$. Before applying this result to the least singular value of $M_n-zI_n$, we need to establish control of the largest singular value (or operator norm) of $M_n-zI_n$.

\begin{lem}[Largest singular value]\label{lem:largestsv}
	Under the hypotheses of the previous proposition, for every $z\in \C$ and $k>1$ we have that
	\[
	\P(\sigma_1(M_n-zI_n)\geq n^{k-1/2})\leq 3n^{2-2k}(\abs{z}^2+1+n^{-2}p_n^{-1}\norm{F_n}_{HS}^2).
	\]
\end{lem}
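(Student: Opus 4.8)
The plan is to bound $\sigma_1(M_n - zI_n)^2$ by the squared Hilbert--Schmidt norm $\norm{M_n - zI_n}_{HS}^2$, which is the sum of all squared singular values and hence dominates the largest one, and then apply Markov's inequality. First I would write
\[
    \sigma_1(M_n - zI_n)^2 \leq \norm{M_n - zI_n}_{HS}^2 = \norm{(np_n)^{-1/2}(X_n + F_n) - zI_n}_{HS}^2,
\]
and expand the right-hand side. Using the elementary inequality $\norm{A + B}_{HS}^2 \leq 3(\norm{A_1}_{HS}^2 + \norm{A_2}_{HS}^2 + \norm{A_3}_{HS}^2)$ for a sum of three matrices (or just the parallelogram-type bound $|a+b+c|^2 \le 3(|a|^2+|b|^2+|c|^2)$ applied entrywise), split into the contribution of $(np_n)^{-1/2}X_n$, of $(np_n)^{-1/2}F_n$, and of $zI_n$:
\[
    \norm{M_n - zI_n}_{HS}^2 \leq \frac{3}{np_n}\norm{X_n}_{HS}^2 + \frac{3}{np_n}\norm{F_n}_{HS}^2 + 3n\abs{z}^2.
\]

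Next I would take expectations. Since $\norm{X_n}_{HS}^2 = \sum_{i,j}\abs{\delta_{ij}^{(n)}x_{ij}}^2$, and for off-diagonal entries $\E[\abs{\delta_{ij}^{(n)}x_{ij}}^2] = \E[\delta_{ij}^{(n)}]\E[\abs{x_{ij}}^2] = p_n$ (using independence of the Bernoulli and atom variables and that $\xi_1, \xi_2$ have variance $1$), while the $n$ diagonal entries contribute $n p_n \E[\abs{x_{11}}^2]$, we get $\E\norm{X_n}_{HS}^2 = (n^2 - n)p_n + np_n\E[\abs{x_{11}}^2] \leq n^2 p_n(1 + \E[\abs{x_{11}}^2])$; absorbing the constant (or, as elsewhere in the paper, one may simply truncate or note the diagonal is negligible), this yields $\E\left[\tfrac{1}{np_n}\norm{X_n}_{HS}^2\right] \le n(1+o(1)) \le n$ for $n$ large, hence $\E\norm{M_n - zI_n}_{HS}^2 \le 3n(\abs{z}^2 + 1 + n^{-2}p_n^{-1}\norm{F_n}_{HS}^2)$. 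Then Markov's inequality gives
\[
    \P\big(\sigma_1(M_n - zI_n) \geq n^{k-1/2}\big) = \P\big(\sigma_1(M_n - zI_n)^2 \geq n^{2k-1}\big) \leq \frac{\E\norm{M_n - zI_n}_{HS}^2}{n^{2k-1}} \leq 3n^{2-2k}(\abs{z}^2 + 1 + n^{-2}p_n^{-1}\norm{F_n}_{HS}^2),
\]
which is exactly the claimed bound.

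There is no serious obstacle here; this is a soft first-moment estimate. The only point requiring mild care is the treatment of the diagonal entries of $X_n$, whose second moment $\E[\abs{x_{11}}^2]$ need not equal $1$ under Definition \ref{def:sparseellipticmodel}(ii) — but it is a fixed finite constant independent of $n$, so it is swallowed by the $O(\cdot)$ constant, and in any case by Lemma \ref{lem:zerodiagonal} we may assume the diagonal is already zero. One should also be slightly careful that the constant $3$ in the statement is achieved: using $\abs{a+b+c}^2 \le 3(\abs{a}^2+\abs{b}^2+\abs{c}^2)$ and, if needed, the assumption that $n$ is large enough that $\E[\abs{x_{11}}^2]/n$ and the $-n$ diagonal correction are negligible relative to the main terms, the bound closes with constant $3$ as stated.
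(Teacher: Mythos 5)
Your proof is correct and takes essentially the same route as the paper: bound $\sigma_1(M_n-zI_n)$ by the Hilbert--Schmidt norm, split into the three summands with the constant $3$, compute the entrywise second moments of $X_n$, and finish with Markov's inequality. The paper treats the diagonal just as casually as you do (it simply counts every entry of $X_n$ as contributing $p_n$ to $\E\norm{X_n}_{HS}^2$), so your remark about $\E\abs{x_{11}}^2$ is, if anything, slightly more careful than the original.
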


\begin{proof}
	By Markov's inequality,
	\begin{align*}
		\P(\sigma_1(M_n-zI_n)\geq n^{k-1/2}) &=
		\P(\norm{p_n^{-1/2}(X_n+F_n)-\sqrt{n}zI_n}_{op}\geq n^k)\\&\leq
		\P(\norm{p_n^{-1/2}(X_n+F_n)-\sqrt{n}zI_n}_{HS}^2\geq n^{2k})\\&\leq
		\frac{\E\norm{p_n^{-1/2}(X_n+F_n)-\sqrt{n}zI_n}_{HS}^2}{n^{2k}}.
	\end{align*}
	We compute the expectation of the squared Hilbert--Schmidt norm of the given matrix:
	\begin{align*}
		\E\norm{p_n^{-1/2}(X_n+F_n)-\sqrt{n}zI_n}_{HS}^2
        &\leq 3\E\parens{p_n^{-1}\norm{X_n}_{HS}^2
        +p_n^{-1}\norm{F_n}_{HS}^2+n^2\abs{z}^2}\\&= 3\parens{n^2\abs{z}^2+p_n^{-1}\sum_{1\leq i,j\leq n}\E[\delta_{ij}^{(n)}\abs{x_{ij}}^2]+p_n^{-1}\norm{F_n}_{HS}^2}\\&=
        3n^2\parens{\abs{z}^2+1+n^{-2}p_n^{-1}
        \norm{F_n}^2_{HS}}.
	\end{align*}
	The result follows after plugging this bound for $\E\norm{p_n^{-1/2}(X_n+F_n)-\sqrt{n}zI_n}_{HS}^2$ into the above string of inequalities.
\end{proof}

\begin{remark}\label{rem:controlofopnorm}
    Note here that this result together with our assumptions on $(F_n)_{n\geq1}$ imply for $k>1$,
\[
    \P(\sigma_1(M_n-zI_n)\geq n^{k-1/2}) = o(1).
\]
\end{remark}
Combining these previous two results together, we will now prove Proposition \ref{prop:leastsingularvalue}.

\begin{proof}[Proof of Proposition \ref{prop:leastsingularvalue}]
	For ease of notation, we define our scaled matrix by $Y_n = (np_n)^{-1/2}X_n$ and the deterministic summand by $A = (np_n)^{-1/2}F_n-zI_n$. We will be using the fact that
	\begin{align*}
		\P\parens{\sigma_n(Y_n+A)\leq n^{-r}} &\leq
		\P\parens{\sigma_n(Y_n+A)\leq n^{-r},\sigma_1(Y_n+A)\leq n^{1.5}}\\&\quad+\P(\sigma_1(Y_n+A)\geq n^{1.5}).
	\end{align*}
	From Lemma \ref{lem:largestsv} and Remark \ref{rem:controlofopnorm}, we have that the bottom-most probability is
	\[
		\P(\sigma_1(Y_n+A)\geq n^{1.5})=
		\P(\sigma_1(M_n-zI_n)\geq n^{1.5}) = o(1),
	\]
	so we are now in a place to be able to apply Theorem \ref{thm:andrewsmallsingval}. In order to use this result, we note that $Y_n = n^{-1/2}Z_n$ with $Z_n = p_n^{-1/2}X_n$. Thus, we need to show the following about $Z_n$:
	\begin{enumerate}[label = (\roman*)]
		\item (off-diagonal entries) $\{ ((Z_n)_{ij}, (Z_n)_{ji}) : 1 \leq i < j \leq n\}$ is a collection of independent random tuples.
		\item (diagonal entries) The diagonal entries $\{ (Z_n)_{ii} : 1 \leq i \leq n\}$ are independent of the off-diagonal entries (but can be dependent on each other).
		\item There exists $a > 0$ such that
		the events 
		\begin{equation} \label{eq:defEij}
			\mathcal{E}_{ij}^a := \{ |(Z_n)_{ij}| \leq a, |(Z_n)_{ji}| \leq a \} 
		\end{equation}
		defined for $i \neq j$ satisfy 
		\[ b := \min_{i < j} \P(\mathcal{E}_{ij}^a) > 0, \quad \sigma^2 := \min_{i \neq j} \var((Z_n)_{ij} \mid \mathcal{E}_{ij}^a) > 0, \]
		and 
		\[ \rho := \max_{i < j} \left| { \corr( (Z_n)_{ij} \mid \mathcal{E}_{ij}^a, (Z_n)_{ji} \mid \mathcal{E}_{ji}^a) } \right| < 1. \]
	\end{enumerate}
	Conditions (i) and (ii) are satisfied by our matrix model given by the assumptions of Definition \ref{def:sparseellipticmodel}; we need only check (iii). Since the entries of our matrix have identical distributions above and below the diagonal, we need only check that there exists $a'>0$ such that for all $a\geq a'$,  $b=\P(\cal{E}_{12}^a)>0$, $\sigma^2  = \min\{\var((Z_n)_{12}\given \cal{E}_{12}^a),\var((Z_n)_{21}\given \cal{E}_{21}^a)\}>0$ and $\rho = \abs{\corr((Z_n)_{12}\given\cal{E}^a_{12}, (Z_n)_{21}\given\cal{E}^a_{21})}<1$.
	
	We have that $b>0$ by Chebyshev's inequality:
	\begin{align*}
		\P((\cal{E}_{12}^a)^c) &\leq \P(\abs{(Z_n)_{12}}>a)+\P(\abs{(Z_n)_{21}}>a)\\&=
        \P (\delta_{12}^{(n)}\abs{x_{12}}>a\sqrt{p_n})+\P (\delta_{21}^{(n)}\abs{x_{21}}>a\sqrt{p_n})\\&=
        p_n(\P (\abs{x_{12}}>a\sqrt{p_n})+\P (\abs{x_{21}}>a\sqrt{p_n}))\leq
        \frac{2}{a^2}.
	\end{align*}
	As long as $a>2$, say, then $b>0$. 
	
	To deal with $\sigma^2$ and $\rho$, we have by the Dominated Convergence Theorem that as $a\to\infty$,
	\[
		\var((Z_n)_{12}\given \cal{E}_{12}^a) \to \var((Z_n)_{12}) = \frac{1}{p_n}\var((X_n)_{12})=1
	\]
	(the same holds true for the $Z_{21}$ case) and
	\[
		\cov((Z_n)_{12}\given\cal{E}_{12}^a, (Z_n)_{21}\given\cal{E}_{21}^a) \to \cov((Z_n)_{12}, (Z_n)_{21}) = \frac{\rho_1}{p_n}(\tau_np_n+p_n^2) = \rho_1(\tau_n+p_n).
	\]
	Therefore, as $a\to\infty$,
	$$\abs{\corr((Z_n)_{12}\given \cal{E}^a_{12},(Z_n)_{21}\given \cal{E}^a_{21})}\to\abs{\rho_1}\abs{\tau_n+p_n},$$
	with $\abs{\rho_1}\abs{\tau_n+p_n}<1$.

    To pick the $a'$ as described above, we first choose an $a_1>0$ such that for all $a\geq a_1$,
    \[
        1/2<\var((Z_n)_{12}\given \cal{E}_{12}^a)<3/2
    \]
    and the same bounds for the $(Z_n)_{21}$ case. Second, we choose an $a_2>0$ such that for all $a\geq a_2$,
    \[
        \abs{\corr((Z_n)_{12}\given \cal{E}^a_{12},(Z_n)_{21}\given \cal{E}^a_{21})}<(1+c)/2 < 1,
    \]
    where $c = \abs{\rho_1}\abs{\tau_n+p_n}$.
	
	Therefore after choosing $a' = \max\{2,a_1,a_2\}$, we may apply Theorem \ref{thm:andrewsmallsingval} to see that since
	\[
	\P\parens{\sigma_n(Y_n+A)\leq n^{-r},\sigma_1(Y_n+A)\leq n^{1.5}}\]equals\[
	\P\parens{\sigma_n(Z_n+\sqrt{n}A)\leq n^{-r+1/2},\sigma_1(Z_n+\sqrt{n}A)\leq n^{2}},
	\]
	it must be that
	\[
		\P\parens{\sigma_n(Y_n+A)\leq n^{-r},\sigma_1(Y_n+A)\leq n^{1.5}} = o(1)
	\]
	for $r>11$, say.
\end{proof}

\section{Uniform integrability of the logarithm}\label{sec:uniformintegrability}

The main result of this section, which invokes the result of the previous section and a bound on intermediate singular values, concerns the uniform integrability of the logarithm with respect to the singular values measures $\nu_{M_n-zI_n}$:

\begin{prop}[Uniform integrability of the logarithm]\label{prop:uniformintegrability}
	Let $(X_n)_{n\geq}$ be a sequence of random matrices coming from Definition \ref{def:sparseellipticmodel} and satisfying the hypotheses of Theorem \ref{thm:auxiliarymaintheorem}. Furthermore, let $(F_n)_{n\geq 1}$ be a sequence of deterministic matrices over $\C$ satisfying $\rank(F_n) = o(n)$ and \eqref{eq:boundedhilbertschmidt}. Then for all $z\in\C$, $\log(\cdot)$ is uniformly integrable in probability for $(\nu_{M_n-zI_n})_{n\geq1}$. In other words: for all $\eps>0$, there exists a $T>0$ (depending on $\eps$ and $z$) such that
	\[
		\limsup_{n\to\infty}\P\parens{\int_{\abs{\log s}>T}\abs{\log s}\,d\nu_{M_n-zI_n}(s)>\eps}<\eps.
	\]
\end{prop}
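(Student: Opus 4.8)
The plan is the classical two-scale truncation of the logarithm. Fix $z\in\C$ and $\varepsilon>0$, write $\nu_n:=\nu_{M_n-zI_n}$ and let $\sigma_1\ge\cdots\ge\sigma_n\ge 0$ be the singular values of $M_n-zI_n$. For $T>0$ split $\int_{|\log s|>T}|\log s|\,d\nu_n(s)=I_1+I_2$, with $I_1:=\int_{s>e^T}\log s\,d\nu_n(s)$ and $I_2:=\int_{s<e^{-T}}\log(1/s)\,d\nu_n(s)$; I would bound $\P(I_1>\varepsilon/2)$ and $\P(I_2>\varepsilon/2)$ separately and then take $T$ large.

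For $I_1$ only a second-moment bound is needed. On $\{s\ge e^T\}$ one has $\log s\le 2s^{1/2}\le 2e^{-T/2}s$, so Cauchy--Schwarz gives $I_1\le 2e^{-T/2}\bigl(\tfrac1n\|M_n-zI_n\|_{HS}^2\bigr)^{1/2}$; and just as in the proof of Lemma~\ref{lem:largestsv}, $\E\bigl[\tfrac1n\|M_n-zI_n\|_{HS}^2\bigr]\le 3\bigl(|z|^2+1+n^{-2}p_n^{-1}\|F_n\|_{HS}^2\bigr)$, which is bounded in $n$ by \eqref{eq:boundedhilbertschmidt}. Markov's inequality then yields $\limsup_n\P(I_1>\varepsilon/2)\le C_z\varepsilon^{-2}e^{-T}$, which is $<\varepsilon/2$ once $T$ is large enough.

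The substance lies in $I_2$. Writing $\sigma_{n-i}:=\sigma_{n-i}(M_n-zI_n)$ and $I_2=\frac1n\sum_{i=0}^{n-1}\log\frac{1}{\sigma_{n-i}}\,\mathbbm{1}\{\sigma_{n-i}<e^{-T}\}$, I would run this sum against two estimates, each valid with probability $1-o(1)$: the least singular value bound $\sigma_n\ge n^{-r}$ of Proposition~\ref{prop:leastsingularvalue}, and an \emph{intermediate} singular value bound $\sigma_{n-i}\ge c\,i/n$ for all $n^{1-\gamma}\le i\le n-1$, valid for suitable $c>0$ and $\gamma\in(0,1)$ (possibly depending on $z$)---this is the extra ingredient this section must supply. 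On the intersection $\mathcal G_n$ of the corresponding events, split the sum at $i_0:=\lceil n^{1-\gamma}\rceil$. For $i<i_0$ use only monotonicity, $\sigma_{n-i}\ge\sigma_n\ge n^{-r}$, so that part is at most $\frac{i_0}{n}\,r\log n=O(n^{-\gamma}\log n)\to 0$. For $i\ge i_0$ the intermediate bound gives $\frac1n\sum_{i\ge i_0}\log\frac{n}{ci}\,\mathbbm{1}\{ci/n<e^{-T}\}$, a Riemann sum for $\int_0^{e^{-T}/c}\log\frac1{cx}\,dx$, which tends to $0$ as $T\to\infty$ since $x\mapsto\log(1/(cx))$ is integrable at the origin. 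Hence for $T$ and then $n$ large, $I_2\le\varepsilon/2$ on $\mathcal G_n$, and $\limsup_n\P(I_2>\varepsilon/2)\le\limsup_n\P(\mathcal G_n^{\,c})=0$. Combining with the $I_1$ bound gives $\limsup_n\P(I_1+I_2>\varepsilon)<\varepsilon$.

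The main obstacle is the intermediate singular value bound. The convergence $\nu_n\to\nu_z$ from Section~\ref{sec:convofESVMs} controls $\nu_n([0,\eta])$ only for fixed $\eta$, hence $\sigma_{n-i}$ only when $i=\Theta(n)$, whereas the Riemann-sum step above lives exactly in the range $n^{1-\gamma}\le i=o(n)$. I would obtain the bound by a union bound over coordinate compressions: an $\varepsilon$-net plus combinatorial argument in the spirit of Tao--Vu shows that $\{\sigma_{n-i+1}(M_n-zI_n)\le c\,i/n\}$ forces some submatrix on $\asymp i$ rows and $\asymp i$ columns to be poorly invertible; each such submatrix is again an elliptic-type random matrix---after first stripping off the rank-$o(n)$ perturbation $F_n$ via the interlacing $\sigma_j(A+B)\ge\sigma_{j+\rank B}(A)$, which also forces the choice $i_0\gg\rank F_n$---and its least singular value is controlled by Theorem~\ref{thm:andrewsmallsingval}, whose logarithmic factor is exactly what lets the small-ball probabilities beat the $\binom{n}{i}$ entropy once $i\ge n^{1-\gamma}$. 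Carrying this out while conditioning on the Bernoulli pattern (so a compression has effective size $\asymp i\,p_n$ rather than $i$) and keeping all constants uniform in $z$ is where the bookkeeping concentrates.
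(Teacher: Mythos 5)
Your reduction and bookkeeping (the large-$s$ tail via the Hilbert--Schmidt norm and Markov, the small-$s$ tail split at an index $i_0$ using the least singular value bound of Proposition \ref{prop:leastsingularvalue} below $i_0$ and an intermediate bound $\sigma_{n-i}\geq c\,i/n$ above it, ending in a convergent Riemann sum) is essentially the paper's argument, which phrases the same two tails as boundedness in probability of $\int s^{q}\,d\nu_{M_n-zI_n}$ and $\int s^{-q}\,d\nu_{M_n-zI_n}$. The gap is in how you propose to \emph{supply} the intermediate singular value bound, which is exactly the new input this section needs (the paper's Proposition \ref{prop:othersingularvalues}). Your plan---union bound over coordinate compressions, with each compressed submatrix controlled by Theorem \ref{thm:andrewsmallsingval}---cannot work as described. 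First, Theorem \ref{thm:andrewsmallsingval} gives a failure probability that is only \emph{polynomially} small (of order $\parens{\log(Cms)\,(\sqrt{s^5t}+m^{-1/2})}^{1/4}$, so never better than roughly $m^{-1/8}$ for an $m\times m$ block); a union over the $\binom{n}{i}$-type entropy of row/column subsets, which is $\exp(\Theta(i\log(n/i)))\geq \exp(c\,n^{1-\gamma})$ in your range, overwhelms this completely---the logarithmic factor in that theorem sits inside the bound and hurts rather than helps. Second, the reduction itself is not justified: $\sigma_{n-i+1}(M_n-zI_n)\leq c\,i/n$ gives an $i$-dimensional subspace on which the matrix is small, but near-kernel vectors are typically spread out, and no stated lemma converts this into poor invertibility of a submatrix on $\asymp i$ rows and $\asymp i$ columns.

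There is also a concrete problem with your treatment of $F_n$. Stripping it off via $\sigma_j(A+B)\geq\sigma_{j+\rank B}(A)$ forces $i_0\gg\rank(F_n)$, and the hypothesis only gives $\rank(F_n)=o(n)$ (it could be, say, $n/\log\log n$); then the below-$i_0$ block of your sum is only bounded by $\tfrac{i_0}{n}\,r\log n$, which need not tend to $0$, so the $\eps/2$ bound on $I_2$ fails. The paper sidesteps both issues at once: Proposition \ref{prop:othersingularvalues} is proved for $(np_n)^{-1/2}X_n+M$ with an \emph{arbitrary} deterministic $M$ (so $(np_n)^{-1/2}F_n-zI_n$ is absorbed with no rank-stripping), and the mechanism is Cauchy interlacing after deleting $\sim i/2$ rows, the negative second moment identity $\sum_j\sigma_j'^{-2}=\sum_j\dist_j^{-2}$, and the distance-to-subspace estimate of Lemma \ref{lem:disttosubspace}, whose Talagrand-based failure probability $C\exp(-n^{\eps_0})$ is exponentially small and therefore survives the union bound over $i$ and $j$ and a Borel--Cantelli argument. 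To repair your proof you would need an input of that exponential strength (or adopt the negative-second-moment route directly); Theorem \ref{thm:andrewsmallsingval} is the right tool only for the single smallest singular value, as in Proposition \ref{prop:leastsingularvalue}.
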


	Before proving this proposition, we require both control of the smallest singular value (which we have from Proposition \ref{prop:leastsingularvalue}) and the intermediate singular values, which comes from the subsequent proposition. This result is similar to those proven by Tao and Vu in \cite[Section 6]{taouniversality}.
	
	\begin{prop}\label{prop:othersingularvalues}
		There exists $c_0 > 0$ and $0 < \gamma < 1$ such that the following holds. Let $(X_n)_{n\geq1}$
		be a sequence of random matrices coming from Definition \ref{def:sparseellipticmodel} with $\theta=0$. Then almost surely for all $n\gg1$, for all $n^{1-\gamma}\leq i \leq n-1$, and for any deterministic $n\times n$ matrix $M$,
		\[
			\sigma_{n-i}((np_n)^{-1/2}X_n+M)\geq c_0\frac{i}{n}.
		\]
	\end{prop}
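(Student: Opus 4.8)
The plan is to adapt the argument of Tao and Vu for the intermediate singular values of an i.i.d.\ matrix plus a deterministic shift \cite[Section 6]{taouniversality}, accounting for the two new features of our setting: the sparsity, which is innocuous precisely because $np_n\to\infty$, and the elliptic coupling of the mirrored entries, which is handled---exactly as in the least singular value bound \cite{andrewelliptic}---by exploiting that $\rho_1<1$ is \emph{strict}, so that the conditional small-ball estimates below never degenerate. Write $A_n := (np_n)^{-1/2}X_n + M$. Combining Lemma \ref{lem:largestsv} and Remark \ref{rem:controlofopnorm}, we may work on the event $\{\sigma_1(A_n)\le n\}$, which holds with overwhelming probability; then, by the first Borel--Cantelli lemma and a union bound over $i$, it suffices to produce $c_0>0$ and $0<\gamma<1$ such that
\[
\P\parens{\sigma_{n-i}(A_n)\le c_0\tfrac{i}{n},\ \sigma_1(A_n)\le n}\le e^{-c i}
\]
for all large $n$, all $n^{1-\gamma}\le i\le n-1$, and \emph{uniformly} over the deterministic matrix $M$.

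The first step is a purely geometric reduction. If $\sigma_{n-i}(A_n)\le\lambda$, then there is an $(i+1)$-dimensional subspace $H\subseteq\C^n$ on which $\norm{A_nv}\le\lambda\norm{v}$; writing $R_1,\dots,R_n$ for the rows of $A_n$ and using $\norm{A_nv}^2=\sum_{k}\abs{R_k\cdot v}^2$, an averaging argument resting on the negative second moment identity (applied to suitable $\lceil i/2\rceil\times\lceil i/2\rceil$ submatrices) shows that the event above forces at least $i/2$ of the rows $R_j$ to lie within distance of order $\lambda\sqrt{n/i}=c_0\sqrt{i/n}$ of a subspace $W$ of codimension of order $i$ spanned by (most of) the remaining rows. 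Thus it is enough to bound the probability that there exist such a configuration of a row and a low-dimensional subspace spanned by the others. Note that, typically, $\dist(R_j,W)$ is of order $\sqrt{(\operatorname{codim}W)/n}\asymp\sqrt{i/n}$, so the task is to rule out a factor-$c_0$ downward deviation.

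The heart of the matter is the resulting small-ball estimate. Condition, for a fixed index $j$, on all entries of $X_n$ outside both row $j$ and column $j$; this freezes the subspace $W=\operatorname{span}\{R_\ell:\ell\ne j\}$ up to the influence of the column-$j$ entries $\{x_{\ell j}:\ell\ne j\}$, which are only \emph{pairwise} correlated with the row-$j$ entries $\{x_{j\ell}:\ell\ne j\}$, with correlation bounded by $\abs{\rho_1}<1$. Conditioning further on the truncation events $\cal E_{j\ell}$ and using the strictly positive conditional variances together with $\abs{\corr}<1$ as in \cite{andrewelliptic} restores a genuinely non-degenerate law for row $j$ given everything else. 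For $W$ (conditionally) fixed, $\dist(R_j,W)^2=\sum_m\abs{R_j\cdot b_m}^2$ over an orthonormal basis $b_m$ of $W^\perp$; each summand is a linear form in the independent entries of the conditioned row (within a single row the elliptic model couples nothing), of conditional variance $\asymp 1/n$, and because $np_n\to\infty$ and $x_{j\ell}$ has unit variance these forms obey a Berry--Esseen-type Lévy concentration bound that does not deteriorate with $n$; a compressible/incompressible dichotomy à la Rudelson--Vershynin handles sparse directions $b_m$, and here the sparsity of $X_n$ is exactly what keeps enough coordinates ``active'' on the incompressible part. Tensorizing over the order-$i$ many $b_m$ gives the required $e^{-ci}$ bound for fixed $W$, and the combinatorial cost of choosing which rows are involved is absorbed by splitting the range of $i$ (the linear-codimension range via this crude tensorization, the sub-polynomial range via the inverse Littlewood--Offord / sub-block least-singular-value machinery), exactly as in \cite[Section 6]{taouniversality}; this balancing is what forces $i\ge n^{1-\gamma}$. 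The deterministic matrix $M$ enters only by shifting the center of each small-ball event from $0$ to a conditionally deterministic constant, which the estimates permit. For the top range $i\ge(1-\delta)n$ the claim $\sigma_{n-i}(A_n)\ge c_0(1-\delta)$ is much weaker and follows from a crude second-moment argument, since $\E\norm{A_n}_{HS}^2\ge cn$.

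The main obstacle is handling the two new features at once: decoupling the row $R_j$ from the subspace spanned by the other rows in the elliptic model (the row/column conditioning above, leaning on $\rho_1<1$) while simultaneously keeping the sparse anticoncentration on the incompressible directions (which needs $np_n\to\infty$), and doing so with combinatorial losses that the $e^{-ci}$ gain can still beat in the small-codimension regime. Once this core estimate is in place, the remainder of the argument is bookkeeping that runs parallel to \cite[Section 6]{taouniversality}.
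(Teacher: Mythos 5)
There is a genuine gap at the heart of your small-ball step: you condition on the column-$j$ entries and claim that $\abs{\rho_1}<1$ ``restores a genuinely non-degenerate law for row $j$ given everything else.'' Under Definition \ref{def:sparseellipticmodel} the only hypothesis on the pair is $\E[\xi_1\xi_2]=\rho_1$ with $\rho_1<1$; this does not prevent $\xi_2$ from being a deterministic (nonlinear) function of $\xi_1$, in which case the conditional distribution of $x_{j\ell}$ given $x_{\ell j}$ is a point mass and every conditional Lévy-concentration estimate you invoke degenerates. The machinery of \cite{andrewelliptic} (Theorem \ref{thm:andrewsmallsingval}) treats the mirrored pair jointly through truncated variances and a correlation bound; it never asserts non-degeneracy of one entry conditioned on its mirror, so it cannot be cited to repair this. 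The paper sidesteps the issue entirely: it deletes the $j$-th column, so that the (shortened) row $r_j(A_j')$ is exactly independent of the subspace $H_j'$ spanned by the other shortened rows, and uses the monotonicity $\dist(r_j(A'),H_j)\geq\dist(r_j(A_j'),H_j')$; no conditional law of a row given a column is ever needed.

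Two further steps as written would not go through. First, your $e^{-ci}$ bound is obtained by ``tensorizing'' per-direction small-ball estimates over an orthonormal basis $b_m$ of $W^\perp$, but the projections $R_j\cdot b_m$ are not independent, so the product bound is unjustified; the paper instead controls the single quantity $\dist(r_j(A_j'),H_j')$ via Talagrand's concentration inequality plus a second-moment lower bound on the median (Lemma \ref{lem:disttosubspace}), giving $1-C\exp(-n^{\eps_0})$ for each fixed subspace, which is already strong enough for a union bound over $j$ and $i$ and Borel--Cantelli -- no compressible/incompressible dichotomy or inverse Littlewood--Offord input is required. Second, your geometric reduction (``at least $i/2$ rows close to a span of the others'') incurs an entropy cost of order $\binom{n}{i/2}$, which overwhelms $e^{-ci}$ for $i=o(n)$; you defer this to unspecified ``sub-block least-singular-value machinery,'' but that is precisely the hard part. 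The paper's route through Cauchy interlacing on the top $m=\lceil n-i/2\rceil$ rows and the negative second moment identity of \cite[Lemma A.4]{taouniversality} avoids any union over row subsets: one simply lower-bounds each $\dist_j$ with overwhelming probability and sums, which is why the argument closes with the comparatively soft concentration input above.
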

	\begin{proof}
		We let $\sigma_1\geq \sigma_2\geq\cdots\geq\sigma_n$ be the $n$ singular values of $A = (np_n)^{-1/2}X_n+M$. It suffices to prove the result for any fixed $2n^{1-\gamma}\leq i\leq n-1$ for some $0<\gamma<1$ not depending on $i$ to be chosen later. If $Z_n = p_n^{-1/2}X_n$, then $A = n^{-1/2}Z_n+M$. To deal with the intermediate singular values of $A$, let $A'$ be formed from the first $m = m(n,i):=\ceil{n-i/2}$ rows of $\sqrt{n}A$. By Cauchy interlacing, with $\sigma_1'\geq\cdots\geq\sigma_m'$ the singular values of $A'$, we have
		\[
			\frac{1}{\sqrt{n}}\sigma_{n-i}'\leq \sigma_{n-i}.
		\]
		
		By \cite[Lemma A.4]{taouniversality}, the negative second moment equality gives
		\[
			\sum_{j=1}^m \sigma_j'^{-2} = \sum_{j=1}^m\dist_j^{-2},
		\]
		where $\dist_j =\dist(r_j(A'),H_j)$ with $r_j(A')$ the $j$-th row of $A'$ and $H_j$ is the subspace spanned by $\{r_k(A'):k\neq j\}$. Since $\sigma_{n-i}^{-2}\leq n\sigma_{n-i}'^{-2}$, it follows that
		\begin{equation}\label{eq:smallishsingularvalues}
			\frac{i}{2n}\sigma_{n-i}^{-2}\leq \frac{i}{2}\sigma_{n-i}'^{-2}\leq \sum_{j=n-i}^{m}\sigma_{j}'^{-2} \leq \sum_{j=1}^m\dist_j^{-2}.
		\end{equation}
		
		Our focus turns to estimating $\dist_j$. Of course, given the elliptic nature of our matrix model, $r_j(A')$ and $H_j$ are not independent. To get around this, we define a new matrix $A_j'$ to be the $m\times(n-1)$ matrix made by removing the $j$th column of $A'$. We similarly define $r_k(A_j')$ to be the $k$th row of $A_j'$ and $H'_k$ to be the subspace spanned by the other rows $r_l(A_j')$ of $A_j'$ (for $l\neq k$). Note now that $r_j(A_j')$ and $H'_j$ are independent for each $1\leq j \leq m$ as we have removed, along with the $j$-th column of $A'$, any obstructions to independence.
		
		We also have that 
		\begin{equation}\label{eq:distances}
			\dist(r_j(A'), H_j) = 
			\inf_{v\in H_j}\norm{r_j(A')-v} \geq \inf_{v\in H_j'}\norm{r_j(A'_j)-v}=\dist(r_j(A'_j),H_j').
		\end{equation}
		Since removing a column from the matrix $A'$ could break some of the linear independence of its rows, we also have that
    \begin{equation}\label{eq:dimensionandi}
    \dim(H_j')\leq \dim(H_j) \leq m-1\leq n-1-i/2\leq n-1-(n-1)^{1-\gamma}.    
    \end{equation}
        
	\begin{comment}
	We give a quick discussion of why we can invoke the following Lemma \ref{lem:disttosubspace}. In the notation of this lemma, the entries of our vector $r_j(A'_j)\in \C^{n-1}$ are distributed as $e^{-i\theta/2}p_n^{-1/2}\delta_i^{(n)}x_i$ where $x_i$ are distributed as either of our atom variables $\xi_1$ or $\xi_2$. From Definition \ref{def:sparseellipticmodel}, we have that both $\xi_1$ and $\xi_2$ are complex random variables with mean zero and unit variance. Furthermore, the vector $r_j(A_j') = (\eta_1,\ldots,\eta_{n-1})\in\C^{n-1}$ as defined above has independent entries that are distributed as in Lemma \ref{lem:disttosubspace} plus a deterministic summand coming from $\sqrt{n}M$ in the definition of $A'$.
	\end{comment}
    
		By Lemma \ref{lem:disttosubspace}, there exists $0<\gamma<1$ and $\eps_0>0$ such that for all $n\gg1$, each $1\leq j\leq m$, and any deterministic subspace $H$ of $\C^{n-1}$ with $1\leq \dim(H)\leq n-1-(n-1)^{1-\gamma}$:
		\[
			\P\left(\dist(r_j'(A_j'),H)\leq \frac{1}{2}\sqrt{n-1-\dim(H)}\right)\leq C\exp(-(n-1)^{\eps_0}).
		\]
		
		Of course, for the subspace $H$, we would like to use $H_j'$, which is of the correct dimension but is not deterministic. However, because $r_j'(A_j')$ and $H_j'$ are independent, we may first condition on a realization of $H_j'$, get the desired bound, and then integrate out the conditioning to finally arrive at
		\[
			\P\left(\dist(r_j'(A_j'),H_j')\leq \frac{1}{2}\sqrt{n-1-\dim(H_j')}\right)\leq C\exp(-(n-1)^{\eps_0})
		\]
		as well, which holds for all $n\gg1$ and each $1\leq j\leq m$. It follows in addition by \eqref{eq:dimensionandi} that with $c_0=\frac{1}{2\sqrt{2}}$, for all such $n\gg1$ and $1\leq j\leq m$,
		\[
			\P\left(\dist(r_j'(A_j'),H_j')\leq 	c_0\sqrt{i}\right)\leq C\exp(-(n-1)^{\eps_0}).
		\]
        By \eqref{eq:distances}, the same probabilistic bound holds for $\dist_j=\dist(r_j(A'),H_j)$ and so by the union bound,
		\[
			\sum_{n=1}^\infty \P\parens{\bigcup_{i=2n^{1-\gamma}}^{n-1}\bigcup_{j=1}^m\set{\dist_j\leq c_0 \sqrt{i}}}<\infty.
		\]
		Therefore, by Borel--Cantelli, we have that for all $n\gg1$, all $2n^{1-\gamma}\leq i \leq n-1$, and all $1\leq j\leq m$,
		\[
			\dist_j \geq c_0\sqrt{i}\quad \text{almost surely.}
		\]
		The desired result of this proposition follows using the previous almost sure estimate and \eqref{eq:smallishsingularvalues}.
	\end{proof}
	
	Even though sparsity exists in our model, we are still able to invoke the following lemma, as seen similarly in \cite[Lemma 7.7]{ellipticlaw}:
	
	\begin{lem}[Distance of a random vector to a subspace]\label{lem:disttosubspace}
		Let $x$ and $y$ be complex-valued random variables with mean zero and unit variance. Let $\eta:=(\eta_1,\ldots,\eta_n)$ be a random vector
		in $\C^n$  of the form
		\[
			\eta_i = p_{n+1}^{-1/2}\delta_i^{(n+1)}\xi_i,
		\]
		where we will assume
        \begin{itemize}
            \item[(i)] The variables $\eta_1,\ldots,\eta_n$ are jointly independent.

            \item[(ii)] Each $\delta_{i}^{(n+1)}$ is a distributed as a Bernoulli($p_{n+1}$) variable and each $\xi_i$ is distributed as either $x$ or $y$.
            \item[(iii)] For each $1 \leq i \leq n$, $\delta_{i}^{(n+1)}$ and $\xi_i$ are independent.
        \end{itemize}	
		Under these assumptions, there exists $\gamma>0$ and $\eps_0>0$ such that the following holds. For all $n\gg1$, any deterministic
		vector $v\in\C^n$, and any deterministic subspace $H$ of $\C^n$ with $1\leq \dim(H)\leq n-n^{1-\gamma}$,
		\[
			\P\parens{\dist(R,H) \leq \frac{1}{2}\sqrt{n-\dim(H)}}\leq C\exp(-n^{\eps_0}),
		\]
		where $C>0$ is an absolute constant and $R = \eta+v$.
	\end{lem}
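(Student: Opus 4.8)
The plan is to follow the dense-case argument of \cite[Lemma 7.7]{ellipticlaw} together with the intermediate-singular-value machinery of Tao and Vu \cite[Section 6]{taouniversality}, the extra work being caused entirely by the sparsity. Set $d := n - \dim(H) \ge n^{1-\gamma}$ and let $P$ be the orthogonal projection of $\C^n$ onto $H^\perp$, so that $\rank(P) = d$ and $\dist(R,H)^2 = \norm{PR}^2 = \norm{P\eta + Pv}^2$; this is an inhomogeneous quadratic form in $\eta$, and since the $\eta_i$ are independent with mean zero and $\E\abs{\eta_i}^2 = 1$ (using $(\delta_i^{(n+1)})^2 = \delta_i^{(n+1)}$), its mean is $\E\norm{P\eta + Pv}^2 = \tr P + \norm{Pv}^2 = d + \norm{Pv}^2 \ge d$. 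Thus it suffices to prove the stretched-exponential small-ball bound $\P(\norm{P\eta + Pv}^2 \le \tfrac14 d) \le C\exp(-n^{\eps_0})$; note that if $\norm{Pv}$ is large the deterministic part $\norm{Pv}^2$ already forces $\norm{P\eta+Pv}^2$ to be of order $\norm{Pv}^2 \gg d$, so the substance of the bound is a concentration estimate for the quadratic form showing it does not drop below a quarter of its mean.

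The obstruction is that the entries $\eta_i = p_{n+1}^{-1/2}\delta_i^{(n+1)}\xi_i$ are far from sub-Gaussian: both the heavy tails of $\xi_i$ (only two moments are assumed) and the factor $p_{n+1}^{-1/2}$ place them outside the range where a Hanson--Wright-type inequality applies directly. To handle this I would first condition on the Bernoulli support $S := \set{i : \delta_i^{(n+1)} = 1}$: a Chernoff bound gives $|S| \ge \tfrac12 n p_{n+1}$ off a negligible event, conditionally on which the coordinates $(\eta_i)_{i\in S}$ are independent with mean zero and unit variance, equal to $p_{n+1}^{-1/2}\xi_i$, while $\eta_i = 0$ for $i \notin S$; moreover $\sum_{i\in S}P_{ii}$ concentrates around $(|S|/n)\tr P$ by a sampling-without-replacement (Hoeffding--Serfling) inequality, so the conditional mean of $\norm{P\eta}^2$ stays $\approx d$. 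The relevant object then becomes the affine-quadratic form $\norm{P\eta + Pv}^2$ viewed as a function of the now genuinely dense, unit-variance variables $(\xi_i)_{i\in S}$, whose quadratic part has matrix $p_{n+1}^{-1}(P_{ij})_{i,j\in S}$. After truncating each $\xi_i$ at a suitable level $K_n$ and recentering --- the replacement error being absorbed through $\abs{\dist(R,H)-\dist(\widetilde R,H)} \le \norm{R-\widetilde R}$ and a moment estimate for $\norm{R-\widetilde R}$ --- one applies Hanson--Wright (together with a Bernstein bound for the linear term); using $\norm{(P_{ij})_{i,j\in S}}_{op} \le 1$ and $\norm{(P_{ij})_{i,j\in S}}_{HS}^2 \le \sum_{i\in S}P_{ii} = O(dp_{n+1})$, this produces a failure probability of the form $2\exp(-c\,d\,p_{n+1}/K_n^4)$. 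Choosing $\gamma$ and $K_n$ so that $d\,p_{n+1}/K_n^4 \ge n^{\eps_0}$ for some $\eps_0 > 0$ --- possible precisely because $d \ge n^{1-\gamma}$ --- and combining with the bound for $|S|$ gives the claim.

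I expect the main obstacle to be exactly this joint effect of the two irregularities: the sparsity forces the conditioning on $S$ and inflates the effective boundedness constant of the entries from $K_n$ to $p_{n+1}^{-1/2}K_n$, while the merely second-moment hypothesis on $x$ and $y$ forces the truncation, so the level $K_n$ must be tuned delicately --- large enough that the truncation error is negligible, small enough that the Hanson--Wright exponent $d\,p_{n+1}/K_n^4$ still beats $n^{\eps_0}$ once the codimension $d \ge n^{1-\gamma}$ is exploited. The remaining ingredients --- the Hoeffding--Serfling concentration of $\sum_{i\in S}P_{ii}$, the Bernstein estimate for the linear term, and the control of $\norm{R-\widetilde R}$ --- are routine but need to be recorded; and, as in \cite[Lemma 7.7]{ellipticlaw}, the statement is phrased for a deterministic subspace $H$ precisely so that it can be invoked after conditioning, which is how it enters the proof of Proposition \ref{prop:othersingularvalues}.
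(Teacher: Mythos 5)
There is a genuine gap, and it is exactly at the point you flag as delicate: the exponent you obtain cannot be made as large as $n^{\eps_0}$ under the paper's sparsity hypothesis. Your route conditions on the Bernoulli support $S$ and then runs Chernoff plus (truncated) Hanson--Wright over the $\abs{S}\approx np_{n+1}$ active coordinates. The resulting failure probabilities are of the form $\exp(-c\,np_{n+1})$ (for the event $\abs{S}\ge \tfrac12 np_{n+1}$, and likewise for the concentration of $\sum_{i\in S}P_{ii}$, whose standard deviation is of order $\sqrt{p_{n+1}d}$, so relative concentration already needs $p_{n+1}d\to\infty$) and $\exp(-c\,d\,p_{n+1}/K_n^4)$ for the quadratic form. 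But the model only assumes $np_n\to\infty$, possibly arbitrarily slowly (say $np_n=\log\log n$), and $d\,p_{n+1}\le n\,p_{n+1}$ no matter how you choose $\gamma$; so the step ``choose $\gamma$ and $K_n$ so that $d\,p_{n+1}/K_n^4\ge n^{\eps_0}$, possible precisely because $d\ge n^{1-\gamma}$'' is false --- the codimension cannot compensate for the sparsity, since the exponent is capped by $np_{n+1}$, not by $d$. Hence no stretched-exponential bound $C\exp(-n^{\eps_0})$, uniform over admissible $(p_n)$, can come out of this scheme. (A smaller slip: conditionally on $i\in S$ the entry $p_{n+1}^{-1/2}\xi_i$ has variance $p_{n+1}^{-1}$, not $1$; your later formulas treat it correctly, but the phrasing is inconsistent.)

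The paper avoids this by \emph{not} conditioning on the support. It first absorbs $v$ into the subspace, then truncates the full entries $\eta_i=p_{n+1}^{-1/2}\delta_i^{(n+1)}\xi_i$ at level $n^{\eps}$: the point is that $\P(\abs{\eta_i}>n^{\eps})\le n^{-2\eps}$ uniformly in $p_{n+1}$ (zero coordinates count as already bounded), so Hoeffding over all $n$ coordinates shows that all but $n^{1-\eps}$ of them are bounded by $n^\eps$ except on an event of probability $\exp(-n^{1-2\eps})$. Conditioning on that event and on the few unbounded coordinates, it applies Talagrand's concentration inequality to the convex $1$-Lipschitz map $\dist(\cdot,W)$ of the bounded independent coordinates, compares the median with $\sqrt{\E\dist^2}$, and computes the mean using the \emph{unconditional} normalization $\E\abs{\eta_i}^2=1$, so the sparsity disappears from the second-moment calculation; the Talagrand exponent is then $t^2/n^{2\eps}\gtrsim n^{1-\gamma-2\eps}=n^{\eps_0}$, which is polynomial in $n$ and independent of $p_n$. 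If you want to salvage your plan, you would have to replace the support-conditioning step by something of this flavor --- i.e.\ a concentration mechanism whose rate is governed by $n$ (or by the codimension $d$) rather than by the number $np_{n+1}$ of nonzero entries.
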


    This result is based on similar results like \cite[Corollary 2.1.19]{taotrmt}.
	
	\begin{proof}
		Note that if $H'$ is the subspace spanned by $H$ and $v$, then $\dim(H')\leq \dim(H)+1$ and  $\dist(\eta,H')=\dist(R,H')\leq \dist(R,H)$. Thus, as we only consider $n\gg1$ and we only increase the dimension by 1, we may without loss of generality say that $v=0$. We fix an $\eps>0$ and begin with a truncation on our variables $\eta_i$. By Markov's inequality and our assumptions on $\eta_i$, we have
		\[
			\P(\abs{\eta_i} > n^{\eps}) = \P(\delta_{i}^{(n+1)}\abs{\xi_i}>
            \sqrt{p_{n+1}}\cdot n^{\eps}) = p_{n+1}\P(\abs{\xi_i}>
            \sqrt{p_{n+1}}\cdot n^{\eps})\leq n^{-2\eps}
		\]
		for any $1\leq i \leq n$.
		To see how many of the $\eta_i$ we can truncate with high probability, we apply a special case of Hoeffding's inequality\footnote{If $X_1,\ldots,X_n$ are independent random variables taking values in $[0,1]$ and $S_n = X_1+\cdots+X_n$ is their sum, then $\P(S_n-\E S_n\geq t)\leq \exp(-2t^2/n)$ for all $t>0$.}\cite{hoeffding}:
		\begin{align*}
			\P\parens{\sum_{i=1}^n\mathbbm{1}\set{\abs{\eta_i}\leq n^{\eps}}< n-n^{1-\eps}}&=
            \P\parens{\sum_{i=1}^n\mathbbm{1}\set{\abs{\eta_i}> n^{\eps}}> n^{1-\eps}}
            \\&\leq \exp(-n^{1-2\eps}) 
		\end{align*}
		for all $n\gg1$ depending on $\eps$. For the rest of the proof, we restrict to $\eps\in(0,\tfrac12)$. Thus, we fix any subset of indices $J\subseteq\{1,\ldots,n\}$ of cardinality $\abs{J}=m$, where $m = \floor{n-n^{1-\eps}}$. We will prove the desired result after conditioning on the event $$\Omega_m = \bigcap_{j\in J}\{\abs{\eta_j}\leq n^{\eps}\}$$ and the $\sigma$-algebra $\sigma_m$ generated by the random variables $\{\eta_i:i\not\in J\}$. For notation's sake we will denote any further conditioning with respect to $\Omega_m$ and $\sigma_m$ by $\P_m$ and $\E_m$.
		
		In an application of Talagrand's concentration inequality to come, we would like to use that $R=\eta$ has mean zero. On the event $\Omega_m$, though, it might be that $\E_m [R]\neq 0$. To rectify this, we define
		\[
                Y = (\eta-\E_n\eta)_J = \eta - (\E_m\eta)_J - \eta_{J^c}\in \C^n,
            \]
		where for any vector $v\in\C^n$ and collection of indices $J\subseteq\{1,\ldots,n\}$, we define the vector $v_J\in\C^n$ by $(v_J)_i = v_i\mathbbm{1}_{i\in J}$. If we let $W$ be the subspace spanned by $H$, $(\E_m\eta)_J$, and $\eta_{J^c}$, then we have
		\[
			\dist(R,H)\geq \dist(R,W)= \dist(Y,W)
		\]
		and $\dim(W)\leq \dim(H)+2$. By Talagrand's inequality \cite{talagrand} (also see \cite[Theorem 2.1.13]{taotrmt}), we have for any $t>0$,
		\begin{equation}\label{eq:talagrand}
			\P_m\parens{\abs{\dist(Y,W)-M_m}\geq t}\leq 4\exp\left(\frac{-t^2}{16n^{2\eps}}\right).
		\end{equation}
		Here $M_m$ is a median of $\dist(Y,W)$ under $\Omega_m$ and $\sigma_m$. Using this tail bound given by Talagrand's inequality, we may compare the median $M_m$ to $\E\dist^2(Y,W)$ via
		\[
			M_m \geq \sqrt{\E_m\dist^2(Y,W)}-Cn^{2\eps},
		\]
		where $C$ is an absolute constant. We now derive an estimate of the expected value $\E\dist^2(Y,W)$. If $P$ denotes the projection matrix onto $W^\perp$, then
		\begin{equation}\label{eq:projection}
		\E_m\dist^2(Y,W) = \E_m[Y^*PY]=\sum_{i,j=1}^n\E_m[\overline{Y_{i}}Y_{j}]P_{ij} =\sum_{i=1}^n\E_m[\abs{Y_i}^2]P_{ii}=\sum_{i\in J}\E_m[\abs{Y_i}^2]P_{ii}.
		\end{equation}
        First, $$\lim\limits_{n\to\infty}\sup_{i\in J}\abs{\E_m|Y_i|^2-1}=0,$$
		which follows from the factors $\xi_i$ in $\eta_i$ being distributed as $x$ or $y$ uniformly in $i$ and the Dominated Convergence Theorem. The sparsity here disappears as it does in the computation of $\var(\eta_i)=\frac{1}{p_{n+1}}\E[\delta_i^{(n+1)}\abs{\xi_i}^2]=1$. It then follows that for any $\tfrac12 < c < 1$ and $n \gg_c 1$,
        \[
            \sum_{i\in J}\E_m[\abs{Y_i}^2]P_{ii}\geq c\sum_{i\in J}P_{ii}.
        \]
        
        Continuing the inequality in \eqref{eq:projection},
		\begin{align*}
			\E_m \dist^2(Y,W)&\geq c\sum_{i\in J}P_{ii}\\&=
			c\parens{\tr(P)-\sum_{i\not\in J}P_{ii}}\\&\geq
			c(n-\dim(W)-(n-m))\\&\geq c(n-\dim(H)).
		\end{align*}
		In the last line, we have modified $c$ for all $n$ large enough such that $\frac12<c<1$. Thus, we conclude that $M_m \geq c\sqrt{n-\dim(H)}$ for all $n$ large enough as well, absorbing the $Cn^{2\eps}$ and again, modifying $c$ for all $n$ large enough so that $c$ remains in $(\tfrac12,1)$. 
		
		Plugging $t = (c-\tfrac12)\sqrt{n-\dim(H)}$ into \eqref{eq:talagrand}, we get our desired result, after picking $\eps>0$ and $0<\gamma<1$ such that $\eps_0 = 1-\gamma-2\eps>0$.
\end{proof}
	
	We now use Proposition \ref{prop:leastsingularvalue} and Proposition \ref{prop:othersingularvalues} to prove Proposition \ref{prop:uniformintegrability}:
	
	\begin{proof}[Proof of Proposition \ref{prop:uniformintegrability}]
		We fix $z\in\C$ and $\eps>0$.
		It suffices to show that there exists $q>0$ such that both
		\begin{align}
			\label{eq:probsinUI1}
			\lim_{t\to\infty}\limsup_{n\to\infty}\P\left(\int s^q\,d\nu_{M_n-zI_n}(s)>t\right)&=0,\\
			\label{eq:probsinUI2}
			\lim_{t\to\infty}\limsup_{n\to\infty}\P\left(\int s^{-q}\,d\nu_{M_n-zI_n}(s)>t\right)&=0.
		\end{align}
		
		For \eqref{eq:probsinUI1}, we have that for any $0<q\leq 2$ and any $n\geq1$, that
		\[
			\int s^q\,d\nu_{M_n-zI_n} \leq1 + \frac{1}{n}\norm{M_n-zI_n}_{HS}^2.
		\]
		So for any $t>2$, say, we have by Markov's inequality and the computations of the expected squared Hilbert--Schmidt norm of our matrix (recall the proof of Lemma \ref{lem:largestsv} for this computation) that
		\[
			\P\left(\frac{1}{n}\norm{M_n-zI_n}^2_{HS}>t-1\right)\leq\frac{3(\abs{z}^2+1+n^{-2}p_n^{-1}\norm{F_n}_{HS}^2)}{t-1}.
		\]
		Taking a $\limsup$ in $n$ and a limit in $t$ gives \eqref{eq:probsinUI1}. Recall \eqref{eq:boundedhilbertschmidt} for the assumed boundedness of $n^{-2}p_n^{-1}\norm{F_n}_{HS}^2$.
		
		For the remainder of the proof, we denote the singular values of our matrix $M_n-zI_n$ by $\sigma_1\geq \sigma_2\geq\cdots\geq\sigma_n$. To prove \eqref{eq:probsinUI2}, we have for any $q>0$,
		\begin{align}
			\int s^{-q}\,d\nu_{M_n-zI_n}(s) &= \frac{1}{n}\sum_{i=1}^n\sigma_i^{-q}\nonumber\\&\leq
			\frac{1}{n}\sum_{i=1}^{n-n^{1-\gamma}}\sigma_i^{-q}+
			\frac{1}{n}\sum_{i=n-n^{1-\gamma}}^n\sigma_i^{-q}\nonumber\\&\label{eq:singularsum}\leq
			\frac{1}{n}\sum_{i=n^{1-\gamma}}^{n-1}\sigma_{n-i}^{-q}+
			\frac{1}{n}\sum_{i=n-n^{1-\gamma}}^n\sigma_i^{-q}.
		\end{align}
		The $\gamma$ here is the one appearing in the statement of Proposition \ref{prop:othersingularvalues}. For the sum on the right side of \eqref{eq:singularsum}, we apply Proposition \ref{prop:leastsingularvalue} to see that
		\[
			\frac{1}{n}\sum_{i=n-n^{1-\gamma}}^n\sigma_i^{-q}\leq \frac{n^{1-\gamma}}{n}\sigma_n^{-q}\leq n^{-\gamma}n^{rq}=n^{rq-\gamma}
		\]
		with probability $1-o(1)$. For the sum on the left side of \eqref{eq:singularsum}, we apply Proposition \ref{prop:othersingularvalues} (with $(np_n)^{-1/2}F_n-zI_n$ playing the role of $M$) and see that
		\[
			\frac{1}{n}\sum_{i=n^{1-\gamma}}^{n-1}\sigma_{n-i}^{-q}\leq 
			\frac{1}{c_0^q}\cdot\frac{1}{n}\sum_{i=n^{1-\gamma}}^{n-1}\parens{\frac{i}{n}}^{-q}\leq \frac{1}{c_0^q}\cdot\frac{1}{n}\sum_{i=1}^n \parens{\frac{i}{n}}^{-q}
		\]
		almost surely for $n\gg1$.
		This final sum is a Riemann sum approximation for $\frac{1}{c_0^q}\int_0^1 x^{-q}\,dx$ which converges as soon as $q<1$. Therefore, choosing $q$ such that  $q < \min\{1,\frac{\gamma}{r}\}$, we get \eqref{eq:probsinUI2}. The uniform integrability of the logarithm in probability for $(\nu_{M_n-zI_n})_{n\geq1}$ follows.
	\end{proof}

    \subsection{Proof of Theorem \ref{thm:auxiliarymaintheorem}}

    With our previous propositions and lemmas, we are now in a position to prove Theorem \ref{thm:auxiliarymaintheorem}.

    \begin{proof}[Proof of Theorem \ref{thm:auxiliarymaintheorem}]

    For the sequence of empirical spectral measures $(\mu_{M_n-zI_n})_{n\geq 1}$ and corresponding empirical singular value measures $(\nu_{M_n-zI_n})_{n\geq 1}$ associated with this matrix, we have proven that for almost every $z\in\C$,
		\begin{itemize}
			\item[(i)] $\nu_{M_n-zI_n}$ converges weakly in probability to $\nu_z$ as $n\to\infty$ (this is Proposition \ref{prop:weakconvergence}).
            \item[(ii)] The family $(\nu_z)_{z\in\C}$ determines the elliptic law with parameters $\rho_1$, $\tau$, $\theta=0$, and $p$ (this is Lemma \ref{lem:determinetheellipticlaw}).
		\item[(iii)] $\log(\cdot)$ is uniformly integrable for $(\nu_{M_n-zI_n})_{n\geq1}$ (this is Proposition \ref{prop:uniformintegrability}).
		\end{itemize}
		Therefore, by Lemma \ref{lem:hermitization}, there exists a $\mu\in\cal{P}(\C)$ such that $\mu_{M_n}$ converges weakly in probability to $\mu$ as $n\to\infty$. Moreover, since our family $(\nu_z)_{z\in\C}$ determines the elliptic law with parameters $\rho_1$, $\tau$, $\theta=0$, and $p$, we have that
		\[
			U_\mu(z) = -\int_0^\infty \log(s)\,d\nu_z(s) = U_{\mu_{\rho_1,0,\tau,p}}(z)
		\]
		for almost all $z\in\C$, so that $\mu =\mu_{\rho_1,0,\tau,p}$. Recall that we are using the unicity of the logarithmic potential as seen in \cite[Lemma 4.1]{aroundthecirc}.
        \end{proof}

\appendix

\section{Proof of Lemma \ref{lem:determinetheellipticlaw}}\label{appendix}

This appendix is dedicated to the proof of Lemma \ref{lem:determinetheellipticlaw}. For this proof, we will be following the Hermitization techniques seen in \cite[Section 4.5]{aroundthecirc}.

Before beginning the proof, we go through some of the notation we will be using. For the $n\times n$ matrix $n^{-1/2}W_n$ appearing in Lemma \ref{lem:determinetheellipticlaw}, we define the Hermitization of $n^{-1/2}W_n$ to be the matrix $H\in\cal{M}_n(\cal{M}_2(\C))$ given by
\[
    H_{ij} = \begin{pmatrix}
        0 & n^{-1/2}w_{ij}\\
        n^{-1/2}\overline{w_{ji}}&0
    \end{pmatrix}
\]
for each $1\leq i,j\leq n$, where $W_n = (w_{ij})_{1\leq i,j\leq n}$.
This matrix $H$ is equivalent up to a permutation of the entries to the more classical Hermitization
\[
  \begin{pmatrix}
      0 & n^{-1/2}W_n\\
      n^{-1/2}W_n^* & 0
  \end{pmatrix}\in\cal{M}_2(\cal{M}_n(\C)).  
\]
The block form of the Hermitization $H$ will help to deal with the ellipticity of the atom variable appearing in Lemma \ref{lem:determinetheellipticlaw}. For the Hermitization $H$, we define the resolvent $R$ for a fixed $\eta\in\C^+:=\set{z\in\C:\Imag(z)>0}$:
\[
    R := (H-\eta I_{2n})^{-1} = (H-q(0,\eta)\otimes I_n)^{-1}.
\]
To keep with the block mentality, we discuss the notation on the right. For any $z\in\C$ and $\eta\in\C^+$, we define
\[
    q=q(z,\eta)=\begin{pmatrix}
        \eta & z\\
        \overline{z} & \eta
    \end{pmatrix}\in\cal{M}_2(\C).
\]
Furthermore, the matrix $q\otimes I_n\in \cal{M}_n(\cal{M}_2(\C))$ will be defined as follows:
\[
    (q\otimes I_n)_{ii} = q
\]
for all $1\leq i \leq n$ and
\[
    (q\otimes I_n)_{ij} = 0
\]
for all $i\leq i\neq j\leq n$. When including the $zI_n$ shift for $n^{-1/2}W_n-zI_n$, we have the resolvent form
\[  
    R = (H-q(z,\eta)\otimes I_n)^{-1}.
\]
If the singular values of $n^{-1/2}W_n-zI_n$ are $\sigma_1\geq\cdots\geq\sigma_n$, then the eigenvalues of $H$ are given by $\pm\sigma_1,\ldots,\pm\sigma_n$. To capture this symmetrization at the level of empirical singular value measures, we define the symmetrization of a probability measure $\nu$ on $\R$ to be the probability measure $\hat\nu$ on $\R$ given by
\[
    \hat{\nu}(\cdot) = (\nu(\cdot)+\nu(-\cdot))/2.
\]

Before the proof of Lemma \ref{lem:determinetheellipticlaw}, we will need concentration for the Stieltjes transform of the measures $\hat\nu_{n^{-1./2}W_n-zI_n}$, which is Lemma 7.18 in \cite{ellipticlaw}. The proof follows exactly as it does in \cite{ellipticlaw}.

\begin{lem}[Concentration of Stieltjes transform]\label{lem:concentrationofstieltjes}
\[
    \var\parens{\int\frac{1}{x-\eta}\,d\hat\nu_{n^{-1/2}W_n-zI_n}(x)} = O_{\eta,z}\parens{\frac{1}{n}}.
\]
Furthermore, we have that for the resolvent $R = (H-q(z,\eta)\otimes I_n)^{-1}\in \cal{M}_n(\cal{M}_2(\C))$,
\begin{align*}
    \var\parens{\frac{1}{n}\sum_{i=1}^n(R_{ii})_{12}} &= O_{\eta,z}\parens{\frac{1}{n}},
    \\\var\parens{\frac{1}{n}\sum_{i=1}^n(R_{ii})_{21}} &= O_{\eta,z}\parens{\frac{1}{n}},
\end{align*}
\end{lem}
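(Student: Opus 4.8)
The plan is to prove all three (indeed four) variance bounds at once by the standard martingale (Burkholder-type) estimate for resolvent traces, carried out exactly as in \cite[Lemma 7.18]{ellipticlaw} and as in the i.i.d.\ treatment of \cite[Chapter 2]{BaiSilverstein}. The first step is to recognize that every quantity in the lemma is a normalized trace $\tfrac1n\tr(DR)$ against a fixed block-diagonal matrix $D\in\cal{M}_n(\cal{M}_2(\C))$ whose diagonal blocks have operator norm at most $1$. Indeed, writing $A_n := n^{-1/2}W_n - zI_n$ and recalling from the start of this appendix that the $2n\times2n$ Hermitization of $A_n$ has eigenvalues $\pm\sigma_1(A_n),\dots,\pm\sigma_n(A_n)$, one gets
\[
    \int\frac1{x-\eta}\,d\hat\nu_{n^{-1/2}W_n-zI_n}(x) = \frac1{2n}\sum_{i=1}^n\bigl((R_{ii})_{11}+(R_{ii})_{22}\bigr) = \frac1n\tr(D_0R)
\]
with $D_0$ block-diagonal, every block equal to $\tfrac12 I_2$; likewise $\tfrac1n\sum_i(R_{ii})_{12}=\tfrac1n\tr(D_1R)$ and $\tfrac1n\sum_i(R_{ii})_{21}=\tfrac1n\tr(D_2R)$ with $D_1,D_2$ block-diagonal and blocks $E_{21}$, resp.\ $E_{12}$. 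So it suffices to show $\var\bigl(\tfrac1n\tr(DR)\bigr)=O_{\eta,z}(1/n)$ for a fixed such $D$. I would also record at this point the uniform resolvent bound $\norm{R}=\norm{(H-q(z,\eta)\otimes I_n)^{-1}}\le|\Imag(\eta)|^{-1}$, which holds because the nonzero eigenvalues of $H$ are real, uniformly in $z$.

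Next I would introduce the row/column martingale. Let $\mathcal F_k$ be the $\sigma$-algebra generated by the entries $w_{ij}$ with $\min(i,j)\le k$, so that $\mathcal F_0$ is trivial, $\mathcal F_n$ is the full $\sigma$-algebra, and with $\E_k := \E[\,\cdot\mid\mathcal F_k]$ we have $\tfrac1n\tr(DR)-\E\tfrac1n\tr(DR)=\sum_{k=1}^n\beta_k$, $\beta_k:=\E_k\tfrac1n\tr(DR)-\E_{k-1}\tfrac1n\tr(DR)$. Let $W_n^{(k)}$ be $W_n$ with its $k$-th row and column zeroed, $A_n^{(k)}:=n^{-1/2}W_n^{(k)}-zI_n$, and $R^{(k)}$ the resolvent of the Hermitization of $A_n^{(k)}$. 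Two facts then close the estimate. First, by the ellipticity independence built into Definitions \ref{def:ellipticmodel} and \ref{def:sparseellipticmodel}, the ``$k$-th cross'' of $W_n$---the pairs $\{(w_{kj},w_{jk}):j\ne k\}$ together with $w_{kk}$---is independent of all remaining entries, and $R^{(k)}$ is a function of those remaining entries only; hence $\E_k\tr(DR^{(k)})=\E_{k-1}\tr(DR^{(k)})$, and therefore $\beta_k=\E_k\bigl[\tfrac1n\tr(D(R-R^{(k)}))\bigr]-\E_{k-1}\bigl[\tfrac1n\tr(D(R-R^{(k)}))\bigr]$. Second, $A_n-A_n^{(k)}$ is supported on the $k$-th row and column, hence of rank $\le2$, so the Hermitization perturbation has rank $\le4$, and by the resolvent identity $R-R^{(k)}$ has rank $\le4$ with
\[
    \bigl|\tr\bigl(D(R-R^{(k)})\bigr)\bigr|\le 4\,\norm{D}\,\norm{R-R^{(k)}}\le 8\,|\Imag(\eta)|^{-1}.
\]
Combining, $|\beta_k|\le 16\,(n\,|\Imag(\eta)|)^{-1}$ for every $k$ and every realization.

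To conclude, the martingale differences are orthogonal in $L^2$, so
\[
    \var\Bigl(\tfrac1n\tr(DR)\Bigr)=\sum_{k=1}^n\E|\beta_k|^2\le n\cdot\Bigl(\frac{16}{n\,|\Imag(\eta)|}\Bigr)^2=\frac{256}{n\,|\Imag(\eta)|^2}=O_\eta(1/n),
\]
uniformly in $z$; taking $D\in\{D_0,D_1,D_2\}$ yields the three asserted bounds. The same deterministic bound on $|\beta_k|$ also feeds a complex Burkholder inequality as in \cite[Lemma 2.12]{BaiSilverstein} if one later wants a fourth-moment or almost-sure version, matching the second conclusion of Lemma \ref{lem:concentrationofESM}. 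The only step that is not purely mechanical is the independence used in the first fact above: one must check against Definition \ref{def:ellipticmodel} that deleting the $k$-th row and column removes every dependency between $W_n^{(k)}$ and the $k$-th cross, so that the conditional-expectation cancellation $\E_k\tr(DR^{(k)})=\E_{k-1}\tr(DR^{(k)})$ is legitimate. I expect this bookkeeping with the block Hermitization---not any analytic difficulty---to be the main (mild) obstacle; everything else is the classical resolvent-trace concentration argument.
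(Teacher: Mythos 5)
Your proposal is correct and follows essentially the same route as the paper, which simply invokes the proof of \cite[Lemma 7.18]{ellipticlaw}: a row/column martingale decomposition of the normalized resolvent trace, the rank-at-most-four perturbation from zeroing the $k$-th cross combined with the bound $\norm{R}\le|\Imag(\eta)|^{-1}$ (which holds because $H-q(z,0)\otimes I_n$ is Hermitian, uniformly in $z$), and orthogonality of the martingale differences. Your reduction of all three quantities to $\tfrac1n\tr(DR)$ with fixed block-diagonal $D$ and your check that the elliptic independence structure makes the conditional-expectation cancellation legitimate are exactly the points the cited argument relies on, so no changes are needed.
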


We now give the proof of the Lemma \ref{lem:determinetheellipticlaw}:

\begin{proof}[Proof of Lemma \ref{lem:determinetheellipticlaw}]

Given the correspondence between measures on $\R$ and their symmetrizations, we will be proving that for almost all $z\in\C$, $\hat\nu_{n^{-1/2}W_n-zI_n}\to\hat\nu_z$ weakly in probability as $n\to\infty$. To this end, and by \cite[Theorem B.9]{BaiSilverstein}, we will be showing that for a fixed $z\in\C$ and a fixed $\eta \in\C^+$ that the Stieltjes transforms
\begin{equation}\label{eq:resolvent}
    m_{n}(\eta) := \int\frac{1}{x-\eta}\,d\hat{\nu}_{n^{-1/2}W_n-zI_n}(x) = \frac{1}{2n}\tr(H-q(z,\eta)\otimes I_n)^{-1}
\end{equation}
converge in probability to some $m(\eta)$ as $n\to\infty$. Necessarily, $m(\eta)$ will be the Stieltjes transform of some symmetric probability measure $\hat\nu_z$ on $\R$ and so we will have $\nu_{n^{-1/2}W_n-zI_n}\to\nu_z$ weakly in probability as $n\to\infty$. 

Throughout the proof, we will be summing $2\times2$ diagonal entries of of our resolvent matrix $R$: if
\[
    R_{kk} = \begin{pmatrix}
        a_k & b_k\\
        c_k & d_k
    \end{pmatrix}
\]
for all $1\leq k \leq n$, then by symmetry,
\begin{equation}\label{eq:eqofa}
a=a(q):= \frac{1}{n}\sum_{k=1}^na_k= \frac{1}{n}\sum_{k=1}^nd_k.
\end{equation}
To connect back to \eqref{eq:resolvent}, it follows that
\[  
    m_n(\eta) = a(q).
\]
For notation's sake, we will also define
\[
    b=b(q):=\frac{1}{n}\sum_{k=1}^nb_k\quad\text{and}\quad c=c(q) :=\frac{1}{n}\sum_{k=1}^nc_k.
\]

Throughout this proof, $a$, $b$, and $c$ will depend on $n$, but we will suppress this dependence. By Lemma \ref{lem:concentrationofstieltjes}, it is enough to show that $\E a(q)$, $\E b(q)$, and $\E c(q)$ converges to limits $\alpha$, $\beta$, and $\gamma$, respectively, as $n\to\infty$ and given certain properties of these parameters $\alpha$, $\beta$, and $\gamma$, we will have desirable properties of our limiting measure $\hat\nu_z$.

To begin, we have by Schur block inversion that
\[
     R_{nn} = \parens{n^{-1/2}\begin{pmatrix}
         0 & w_{nn}\\
         \overline{w_{nn}} & 0
     \end{pmatrix}-q(z,\eta)-Q^*\widetilde{R}Q
     }^{-1}.
\]
Here, $Q$ is the last `column' of $W_n$ with the $n$th `entry' removed. That is, $Q$ is a `vector' of length $n-1$ whose entries are $2\times2$ matrices of the form
\[
    Q_{i} = n^{-1/2}\begin{pmatrix}
        0 & w_{in}\\
        \overline{w_{ni}} & 0
    \end{pmatrix}
\]
for all $1\leq i \leq n-1$.
Furthermore, $\widetilde{R}$ is the $2(n-1)\times 2(n-1)$ resolvent of the principal, upper-left minor of the matrix appearing in the original resolvent $R$. In other words,
\[
    \widetilde{R} = (\widetilde{H}-q\otimes I_{n-1})^{-1},        
\]
where $\widetilde{H}$ is the Hermitization of the principal upper left $(n-1)\times(n-1)$ minor of  $n^{-1/2}W_n$.

Because we are using this block matrix structure, even with the ellipticity of our original matrix $W_n$, we still have that $Q$ is independent of $\widetilde{R}$.

Using the independence that remains, our first task is to compute $\E_n[Q^*RQ]\in \cal{M}_2(\C)$, where $\E_n[\cdot]=\E[\cdot\given\cal{F}_n]$ is the conditional expectation with respect to the $\sigma$-algebra $\cal{F}_n$ generated by the variables $(w_{ij})_{1\leq i,j\leq n-1}$. As discussed above, $\widetilde{R}\in \cal{F}_n$ and $Q$ is independent of $\cal{F}_n$. Thus, using the mean zero hypothesis of the $w_{ij}$ and the independence that exists between entries of $W_n$,
\[
\E_n[Q^*\widetilde{R}Q] = \sum_{i,j=1}^{n-1}\E_n[Q^*_i\widetilde{R}_{ij}Q_j]=
    \sum_{i=1}^{n-1}\E_n[Q_i^*\widetilde{R}_{ii}Q_i].
\]
If we set
\[
    \widetilde{R}_{ii} = \begin{pmatrix}
        \widetilde{a}_i & \widetilde{b}_i\\
        \widetilde{c}_i & \widetilde{d}_i
    \end{pmatrix},
\]
then for any $1\leq i\leq n-1$,
\begin{align*}
    \E_n[Q_i^*\widetilde{R}_{ii}Q_i] &= \frac{1}{n}\E_n\left[
    \begin{pmatrix}
        0 & w_{ni}\\
        \overline{w_{in}} & 0
    \end{pmatrix}^*
    \begin{pmatrix}
        \widetilde{a}_i & \widetilde{b}_i\\
        \widetilde{c}_i & \widetilde{d}_i
    \end{pmatrix}
    \begin{pmatrix}
        0 & w_{in}\\
        \overline{w_{ni}} & 0
    \end{pmatrix}\right]\\&=
    \frac{1}{n}\E_n\begin{pmatrix}
        \abs{w_{ni}}^2\widetilde{d}_i & w_{ni}w_{in}\widetilde{c}_i\\
        \overline{w_{ni}w_{in}}\widetilde{b}_i & \abs{w_{in}}^2\widetilde{a}_i
    \end{pmatrix}\\&=
    \frac{1}{n}
    \begin{pmatrix}
        \widetilde{d}_i & \rho\widetilde{c}_i\\
        \rho\widetilde{b}_i & \widetilde{a}_i
    \end{pmatrix}.
\end{align*}

The third line here follows from \eqref{eq:psuedocovariancestructure} and \eqref{eq:covariancestructure}. Therefore,
\[
    \E_n[Q^*\widetilde{R}Q] = \frac{1}{n}\sum_{i=1}^{n-1}\begin{pmatrix}
        \widetilde{d}_i & \rho\widetilde{c}_i\\
        \rho\widetilde{b}_i & \widetilde{a}_i
    \end{pmatrix}
    =
    \frac{1}{n}\sum_{i=1}^{n-1}\begin{pmatrix}
        \widetilde{a}_i & \rho\widetilde{c}_i\\
        \rho\widetilde{b}_i & \widetilde{a}_i
    \end{pmatrix},
\]
where the last equality holds by symmetry as it did in \eqref{eq:eqofa}. By Cauchy interlacing, we have the trace of the minor $\widetilde{R}$ and the trace of the entire resolvent $R$ are not too different: $\abs{\sum_{k=1}^n a_k - \sum_{k=1}^{n-1} \widetilde{a}_k} = O(\frac{1}{\Imag(\eta)})$. 

For the off-diagonal entries, we can achieve similar control. To see this in the case of the $b_k$ and $\widetilde{b}_k$, we can realize the sum $\sum_{k=1}^n b_k$ as the trace of $PRQ$, where $P$ and $Q$ are partial isometries. After writing $\sum_{k=1}^n \widetilde{b}_k$ similarly, using the resolvent identity, the low rank of the difference, and the operator norm of the resolvents themselves gives similar $O(\frac{1}{\Imag(\eta)})$ control.   

Thus, we have shown that
\[
    \E_n[Q^*\widetilde{R}Q] = \begin{pmatrix}
        a & \rho c\\
        \rho b  & a
    \end{pmatrix} + \eps_1,
\]
where $\norm{\eps_1}_{HS} = o(1)$.

However, the more important approximation is
\[
    Q^*\widetilde{R}Q \approx \E_n[Q^*\widetilde{R}Q],
\]
which we will achieve by bounding the trace of
\[
    \eps_2 = \E_n[(Q^*\widetilde{R}Q - \E_n[Q^*\widetilde{R}Q])^*(Q^*\widetilde{R}Q - \E_n[Q^*\widetilde{R}Q])].
\]
This is desirable because by Jensen's inequality,
\[
    \left(\E_n\norm{Q^*\widetilde{R}Q-\E_n[Q^*\widetilde{R}Q]}_{HS}\right)^2\leq \E_n\norm{Q^*\widetilde{R}Q-\E_n[Q^*\widetilde{R}Q]}_{HS}^2 = \tr(\eps_2).
\]
A routine calculation shows that each entry of $\eps_2$ is itself a quadratic or bilinear form coming from entries of $Q$ and $\widetilde{R}$. For example, if we define $\widetilde{R}^{11}\in \cal{M}_{n-1}(\C)$ by
\[
    \widetilde{R}^{11}_{ij} = (\widetilde{R}_{ij})_{11}
\]
and similarly define $Q^{21}\in \C^{n-1}$ by
\[
    Q^{21}_i = (Q_i)_{21},
\]
then
\[
    (Q^*\widetilde{R}Q)_{11} = (Q^{21})^*\widetilde{R}^{22}Q^{21}. 
\]
Bringing the other entries together, we conclude that
\[
    Q^*\widetilde{R}Q = \begin{pmatrix}
        (Q^{21})^*\widetilde{R}^{22}Q^{21} & (Q^{21})^*\widetilde{R}^{21}Q^{12}\\
        (Q^{12})^*\widetilde{R}^{12}Q^{21} &
        (Q^{12})^*\widetilde{R}^{11}Q^{12}
    \end{pmatrix}=:A.
\]
Finally,
\[
    \eps_2 = \begin{pmatrix}
        \var_n(A_{11})+\var_n(A_{21}) & \cov_n(A_{12},A_{11})+\cov_n(A_{22},A_{21})\\
        \cov_n(A_{11},A_{12})+\cov_n(A_{21},A_{22}) & \var_n(A_{12})+\var_n(A_{22}),
    \end{pmatrix}
\]
where $\var_n(Z) = \E_n\abs{Z-\E_nZ}^2$ and $\cov_n(Z,W) = \E_n[(Z-\E_nZ)\overline{(W-\E_n W)}]$. To show that $\tr(\eps_2)=o(1)$, we show that each diagonal entry of $\eps_2$ is $o(1)$.

For the $A_{11}$ case,
\[
    \abs{\E_n A_{11}}^2 = \frac{1}{n^2}\sum_{i,j=1}^{n-1}\widetilde{R}^{22}_{ii}\overline{\widetilde{R}^{22}_{jj}}
\]
and
\[
    \E_n\abs{A_{11}}^2 = \frac{1}{n^2}\sum_{i,j,k,l=1}^{n-1}\widetilde{R}_{ij}^{11}\overline{\widetilde{R}_{kl}^{11}}\E_n[w_{ni}\overline{w_{nj}}\cdot\overline{w_{nk}}w_{nl}].
\]
Using the mean zero hypothesis, independence, the finite fourth moment of these Gaussian variables, we have a Wick-type result (see \cite{wick}, say):
\[
    \E_n\abs{A_{11}}^2=
    \frac{3+\abs{\mu_{22}}^2}{n^2}\sum_{i=1}^{n-1}\abs{\widetilde{R}^{22}_{ii}}^2+\frac{1}{n^2}\sum_{i,j=1}^{n-1}\widetilde{R}^{22}_{ii}\overline{\widetilde{R}^{22}_{jj}}+\frac{1}{n^2}\sum_{i,j=1}^{n-1}\abs{\widetilde{R}^{22}_{ij}}^2+
    \frac{\abs{\mu_{22}}^2}{n^2}\sum_{i,j=1}^{n-1}\widetilde{R}^{22}_{ij}\overline{\widetilde{R}^{22}_{ji}}.
\]
Therefore,
\begin{equation}\label{eq:variancecalc}
    \var_n(A_{11}) \leq \frac{4+\abs{\mu_{22}}^2}{n^2}\norm{\widetilde{R}^{22}}_{HS}^2+
    \frac{\abs{\mu_{22}}^2}{n^2}\abs{\sum_{i,j=1}^{n-1}\widetilde{R}^{22}_{ij}\overline{\widetilde{R}^{22}_{ji}}}.
\end{equation}
The summand on the left in \eqref{eq:variancecalc} is no greater than
\[
    \frac{4+\abs{\mu_{22}}^2}{n^2}\norm{\widetilde{R}^{22}}_{HS}^2\leq \frac{4+\abs{\mu_{22}}^2}{n}\norm{\widetilde{R}}^2=O\parens{\frac{\Imag(\eta)^{-2}}{n}}=o(1).
\]
These two inequalities come from the fact that for any $n\times n$ matrix $M$, $\norm{M}_{HS}\leq \sqrt{n}\norm{M}$ and if $M$ is Hermitian, then the resolvent satisfies
\[
    \norm{(M-\eta I_n)^{-1}}\leq \Imag(\eta)^{-1} 
\]
for any $\eta\in\C^+$.
The summand on the right in \eqref{eq:variancecalc}, by Cauchy--Schwarz, is no greater than
\[
    \frac{\abs{\mu_{22}}^2}{n^2}\abs{\sum_{i,j=1}^{n-1}\widetilde{R}^{22}_{ij}\overline{\widetilde{R}^{22}_{ji}}} \leq   
    \frac{\abs{\mu_{22}}^2}{n^2}\norm{\widetilde{R}^{22}}_{HS}^2\leq
    \frac{\abs{\mu_{22}}^2}{n}\norm{\widetilde{R}^{22}}^2=
    O\parens{\frac{\Imag(\eta)^{-2}}{n}}=o(1).
\] 

\begin{comment}
For the covariance terms, we have, as above
\[
    \E_n A_{12} =
    \frac{1}{n}\sum_{i,j=1}^{n-1}\widetilde{R}^{21}_{ij}\E_n[w_{ni}w_{jn}]=
    \frac{1}{n}\sum_{i,j=1}^{n-1}\widetilde{R}^{21}_{ij}\E_n[w_{ni}w_{jn}] = \frac{\rho}{n}\sum_{i=1}^{n-1}\widetilde{R}^{21}_{ii}
\]
and
\begin{align*}
    \E_n[A_{11}\overline{A_{12}}]&=\frac{1}{n^2}\sum_{i,j,k,l=1}^{n-1}\widetilde{R}^{22}_{ij}\overline{\widetilde{R}_{kl}^{21}}\E_n[w_{ni}\overline{w_{nj}}\overline{w_{nk}}\overline{w_{ln}}] \\&= \frac{\rho}{n^2}\sum_{i,j=1}^{n-1}\widetilde{R}_{ii}^{22}\overline{\widetilde{R}_{jj}^{21}}+
    \frac{\rho}{n^2}\sum_{i,j=1}^{n-1}\widetilde{R}_{ij}^{22}\overline{\widetilde{R}^{21}_{ij}}+
    \frac{\rho\overline{\mu_{22}}}{n^2}\sum_{i,j=1}^{n-1}\widetilde{R}^{22}_{ij}\overline{\widetilde{R}^{21}_{ji}}.
\end{align*}
Therefore,
\[
    \cov_n(A_{11},A_{12}) = \frac{\rho}{n^2}\sum_{i,j=1}^{n-1}\widetilde{R}_{ij}^{22}\overline{\widetilde{R}^{21}_{ij}}+
    \frac{\rho\overline{\mu_{22}}}{n^2}\sum_{i,j=1}^{n-1}\widetilde{R}^{22}_{ij}\overline{\widetilde{R}^{21}_{ji}}.
\]
By Cauchy--Schwarz, we have a bound for the term on the left:
\[
    \abs{\frac{\rho}{n^2}\sum_{i,j=1}^{n-1}\widetilde{R}_{ij}^{22}\overline{\widetilde{R}^{21}_{ij}}} \leq \frac{\abs{\rho}}{n^2}\norm{\widetilde{R}^{22}}_{HS}\norm{\widetilde{R}^{21}}_{HS}\leq \frac{\abs{\rho}}{n}\norm{\widetilde{R}^{22}}\norm{\widetilde{R}^{21}}\leq \frac{\abs{\rho}\Imag(\eta)^{-2}}{n}=o(1).
\]
A similar calculation for the term on the right gives that
\[
    \abs{\cov_n(A_{11},A_{12})} = o(1).
\]    
\end{comment}

A similar result holds for the other variances $\var_n(A_{ij})$ for $1\leq i,j \leq 2$. Summing the diagonal of $\eps_2$ gives a final estimate of $\tr(\eps_2)=o(1)$.
Therefore, the matrix
\begin{equation}\label{eq:dmatrix}
    D = n^{-1/2}\begin{pmatrix}
        0 & w_{nn}\\
        \overline{w_{nn}} & 0 
    \end{pmatrix}
    - Q^*\widetilde{R}Q + \E\begin{pmatrix}
        a & \rho c\\
        \rho b & a
    \end{pmatrix}
\end{equation}
has a Hilbert--Schmidt norm satisfying $\E_n\norm{D}_{HS}=o(1)$. The expected Hilbert--Schmidt norm of the left-most summand in \eqref{eq:dmatrix} is $O(n^{-1/2})$. To deal with the other two summands, we use the triangle inequality and our estimates:
\begin{align*}
    Q^*\widetilde{R}Q &= \E_n[Q^*\widetilde{R}Q]+\eps_4,\\
    \E_n[Q^*\widetilde{R}Q] &= 
    \begin{pmatrix}
        a & \rho c\\
        \rho b & a
    \end{pmatrix}+\eps_1,\\
     \begin{pmatrix}
        a & \rho c\\
        \rho b & a
    \end{pmatrix} &= \E\begin{pmatrix}
        a & \rho c\\
        \rho b & a
    \end{pmatrix} + \eps_3.
\end{align*}

Here, $\eps_1$ is as above with $\norm{\eps_1}_{HS}=o(1)$. The quantity $\E\norm{\eps_3}_{HS}=o(1)$ is coming from Lemma \ref{lem:concentrationofstieltjes}. Finally we have $\E_n\norm{\eps_4}_{HS}\leq \sqrt{\tr(\eps_2)}=o(1)$.
By \eqref{eq:dmatrix} and a computation, we have
\[
    R_{nn}D\parens{q+\E\begin{pmatrix}
        a & \rho c\\
        \rho b & a
    \end{pmatrix}}^{-1} = R_{nn}+\parens{q+\E\begin{pmatrix}
        a & \rho c\\
        \rho b & a
    \end{pmatrix}}^{-1}.
\]
Since $R_{nn}$ and $\parens{q+\E\begin{pmatrix}
        a & \rho c\\
        \rho b & a
    \end{pmatrix}}^{-1}$ both have expected Hilbert--Schmidt norms bounded by $O(\Imag({\eta})^{-1})$, it must be that
\[
    \E R_{nn} = -\parens{q+\E\begin{pmatrix}
        a & \rho c\\
        \rho b & a
    \end{pmatrix}}^{-1} + \eps_5,
\]
where $\norm{\eps_5}_{HS}=o(1)$. Finally, repeating this Schur complement argument for other rows and columns and averaging, we have that
\begin{equation}\label{eq:limit}
    \frac{1}{n}\sum_{i=1}^n\E R_{ii} = \begin{pmatrix}
        \E a & \E b\\
        \E c & \E a
    \end{pmatrix} = -\parens{q+\begin{pmatrix}
        \E a & \rho \E c\\
        \rho \E b & \E a
    \end{pmatrix}}^{-1} +\eps_6,
\end{equation}
where $\norm{\eps_6}_{HS}=o(1)$. It follows that any three accumulation points $\alpha$, $\beta$, and $\gamma$ of the respective sequences $\E a$, $\E b$, and $\E c$ satisfy the self-consistent equation
\begin{equation}\label{eq:selfconsistenteq1}
    \begin{pmatrix}
        \alpha & \beta\\
        \gamma & \alpha
    \end{pmatrix} = -\parens{q+\begin{pmatrix}
        \alpha & \rho \gamma\\
        \rho \beta & \alpha
    \end{pmatrix}}^{-1} = \begin{pmatrix}
        -\eta-\alpha & -z-\rho\gamma\\
        -\overline{z}-\rho\beta & -\eta-\alpha
    \end{pmatrix}^{-1}.
\end{equation}
This is the exact equation appearing in \cite[Lemma C.2]{largeelliptic} so that $\alpha$ must satisfy the following equation:
\begin{equation}{\label{eq:selfconsistenteq}}
    \frac{1}{(\alpha+\eta)\alpha}+1=\frac{\Real(z)^2}{(\eta+(1+\rho)\alpha)^2}+\frac{\Imag(z)^2}{(\eta+(1-\rho)\alpha)^2}.
\end{equation}
Similar equations can be derived explicitly for $\beta$ and $\gamma$ as well, but we will not need them.

Of course, \eqref{eq:limit} is not a proof of convergence for the sequences $\E a$, $\E b$, and $\E c$. However, by the uniform boundedness of $\abs{a},\abs{b},\abs{c}\leq O(\Imag({\eta})^{-1})$ in $n$ and Jensen's inequality, we have that the sequences $\E a$, $\E b$, and $\E c$ are bounded.  Any subsequence of these sequences has a further subsequence that is convergent. Necessarily, the limits of any convergent subsequence of $\E a$, $\E b$, or $\E c$  must be unique and are determined by \eqref{eq:selfconsistenteq} and the equations for $\beta$ and $\gamma$ found in \cite[Remark C.3]{largeelliptic}.  Thus, we may find, after taking further and further subsequences, a common subsequence along which each of $\E a$, $\E b$, and $\E c$ converge. Therefore, each of the sequences $\E a$, $\E b$, and $\E c$, converge to $\alpha$, $\beta$, and $\gamma$, respectively and these limits satisfy \eqref{eq:selfconsistenteq1}.

 By Lemma C.6 in \cite{largeelliptic}, for each $z\in\C$, there is a unique probability measure $\hat{\nu}_z$ on $\R$ such that
\[
    \alpha = \int\frac{1}{x-\eta}\,d\hat{\nu}_z(x).
\]
is a solution of \eqref{eq:selfconsistenteq}. Note here and above that $\alpha$ does depend on $z\in\C$ and $\eta\in\C^+$, but this dependence is suppressed. Therefore, since we have shown that $a(q)$ converges in probability to a solution of \eqref{eq:selfconsistenteq} for each $\eta\in\C^+$ and $z\in\C$, we have again by \cite[Theorem B.9]{BaiSilverstein} that $\hat\nu_{n^{-1/2}W_n-zI_n}\to \hat\nu_z$ and thus $\nu_{n^{-1/2}W_n-zI_n}\to \nu_z$ weakly in probability as $n\to\infty$. 

It still remains to show that $(\nu_z)_{z\in\C}$ determines the elliptic law with parameter $\rho$. To do this, we notice that in the limit \eqref{eq:limit}, there is no dependence on the quantities $\mu_{11}$, $\mu_{12}$, $\mu_{21}$, or $\mu_{22}$. Because the convergence of the Stieltjes transforms completely determines the weak convergence of the empirical singular value measures and this convergence is guaranteed for any values of $\mu_{11}$, $\mu_{12}$, $\mu_{21}$, and $\mu_{22}$, the limit of the weak convergence will be the same given \textit{any} values of the $\mu_{ij}$. Thus, we have the freedom to examine the weak convergence under specific values of these $\mu_{ij}$.

With $\mu_{11}=\mu_{12}=\mu_{21}=\mu_{22}=0$, the real vector $\Psi = (\Real(\psi_1),\Imag(\psi_1),\Real(\psi_2),\Imag(\psi_2))^T$ has the following covariance structure:
\[
    \E[\Psi\Psi^T] = \begin{pmatrix}
        1/2 & 0 & \rho/2 & 0\\
        0 & 1/2 & 0 & -\rho/2\\
        \rho/2 & 0 & 1/2 & 0\\
        0 & -\rho/2 & 0 &1/2
    \end{pmatrix}.
\]
But this means that $(\psi_1,\psi_2)$ is coming from the $(\frac{1}{2},\rho)$-family as in Definition 1.6  of \cite{ellipticlaw}. Hence by Lemma 7.17 of \cite{ellipticlaw} and (jj) of Lemma \ref{lem:hermitization}, it must be that $(\nu_z)_{z\in\C}$ determines the elliptic law with parameter $\rho$. From the discussion of the previous paragraph, this limiting family $(\nu_z)_{z\in\C}$ will be the same for the arbitrary values of the $\mu_{ij}$ given at the beginning of the proof.
\end{proof}

\newpage
\bibliographystyle{siam}
\bibliography{refs}
	
\end{document}